\newtheorem{theorem}{Theorem}[section]
\newtheorem{proposition}[theorem]{Proposition}
\newtheorem{lemma}[theorem]{Lemma}
\newtheorem{corollary}[theorem]{Corollary}
\newtheorem{result}[theorem]{Result}
\newtheorem{rem}[theorem]{Remark}
\def\cC{\mathcal C}
\def\cD{\mathcal D}
\def\cF{\mathcal F}
\def\cH{\mathcal H}
\def\cL{\mathcal L}
\def\cQ{\mathcal Q}
\def\cX{\mathcal X}
\def\Aut{\mbox{\rm Aut}}
\def\PG{{\rm{PG}}}
\def\deg{\mbox{\rm deg}}
\def\det{\mbox{\rm det}}
\def\Aut{\mbox{\rm Aut}}
\def\Div{\mbox{\rm div}}
\def\Sym{\mbox{\rm Sym}}
\def\dim{\mbox{\rm dim}}
\def\gg{\mathfrak{g}}
\newcommand{\PGL}{\mbox{\rm PGL}}
\newcommand{\aut}{\mbox{\rm Aut}}
\newcommand{\g}{\gamma}
\newcommand{\gz}{\zeta}
\newcommand{\ha}{{\textstyle\frac{1}{2}}}
\title{A generalization of Bring's curve in any characteristic}
\date{}
\author[G. Korchm\'aros]{G\'abor Korchm\'aros}
\author[S. Lia]{Stefano Lia}
\author[M. Timpanella]{Marco Timpanella}
\address{G\'abor Korchm\'aros, \textnormal{Dipartimento di
Matematica, Informatica ed Economia Universit\`a degli Studi  della Basilicata.}}
\email{gabor.korchmaros@unibas.it}
\address{Stefano Lia,
\textnormal{School of Mathematics and Statistics, University College Dublin, Belfield, Ireland.}}
\email{stefano.lia@ucd.ie}
\address{Marco Timpanella,
\textnormal{Dipartimento di Matematica e Informatica, Università di Perugia.}}
\email{marco.timpanella@unipg.it}
\begin{document}


\begin{abstract}  A natural generalization of Bring's curve valid over any field $\mathbb{K}$ of zero characteristic or positive characteristic $p\ge 7$, is the algebraic variety $V$ of $\textrm{PG}(m-1,\mathbb{K})$, 
$m\ge 5$,  which is the complete intersection of the projective algebraic hypersurfaces of homogeneous equations $x_1^k+\cdots +x_m^{k}=0$ with $1\leq k\leq m-2$. In positive characteristic, we also assume $m\le p-1$.
 Up to a change of coordinates in $\textrm{PG}(m-1,\mathbb{K})$, we show that $V$ is a projective, absolutely irreducible, non-singular curve of $\textrm{PG}(m-2,\mathbb{K})$.
We show that, if the automorphism group $\aut(V)$ is tame (in particular in characteristic zero), then $\aut(V)$ is linear and isomorphic to $\textrm{Sym}_m$. Remarkably, in positive characteristic, $\aut(V)$ may be larger; in particular $\aut(V)$ happens to  contain non-tame projectivities and this occurs precisely when $m=p-1$. 
In positive characteristic, we further investigate the two extremal cases,  $m=5$ and $m=p-1$, providing information on the number of rational points. In particular, for $m=5$ we show that there exist infinitely many primes $p$ such that $V$ is an $\mathbb{F}_{p^2}$-maximal curve of genus $4$.
We also highlight connections with previous work by R\'edei on the famous Minkowski conjecture proven by Haj\'os (1941), as well as with more recent results by Rodríguez Villegas, Voloch, and Zagier (2001) on plane curves attaining the St\"ohr-Voloch bound, and the regular sequence problem for systems of diagonal equations introduced by Conca, Krattenthaler, and Watanabe (2009).
\end{abstract}
\maketitle
\vspace{0.5cm}\noindent {\em Keywords}:
algebraic curves, function fields, positive characteristic, automorphism groups.
\vspace{0.2cm}\noindent

\vspace{0.5cm}\noindent {\em Subject classifications}:
\vspace{0.2cm}\noindent  14H37, 14H05.

\section{Introduction} Bring's curve is well known from classical geometry as being the curve with the largest automorphism group among all genus $4$ complex curves; see \cite{BN}. Its canonical representation in the complex $4$-space is the complete intersection of three algebraic hypersurfaces of equation $x_1^k+\cdots +x_5^k=0$ with $1\leq k\leq 3$. 
A natural generalization of Bring's curve valid over any field $\mathbb{K}$ of zero characteristic, or positive characteristic $p\ge 7$, is the algebraic variety $V$ of $\textrm{PG}(m-1,\mathbb{K})$, $m\ge 5$, which is the complete intersection of the projective algebraic hypersurfaces of homogeneous equations
\begin{equation}
\label{sy}\left\{
\begin{array}{llll}
X_1 + X_2 + \ldots + X_m=0;\\
X_1^2  +  X_2^2 + \ldots + X_m^2=0;\\
\cdots\cdots\\
\cdots\cdots\\
X_1^{m-2}  +  X_2^{m-2}  +  \ldots+X_m^{m-2}=0.
\end{array}
\right.
\end{equation}

From now on we assume $m\le p-1$ when $\mathbb{K}$ has characteristic $p$. The first equation in (\ref{sy}) implies that $V$ is contained in the hyperplane of homogeneous equation $X_1 + X_2 + \ldots + X_m=0$.
Up to a change of coordinates in $\textrm{PG}(m-1,\mathbb{K})$, we show that $V$ is a projective, absolutely irreducible, non-singular curve of degree $(m-2)!$ embedded in $\textrm{PG}(m-2,\mathbb{K})$; see Theorem \ref{the2502}.

The symmetric group ${\rm{Sym}}_m$ has a natural action on the coordinates $(X_1:\ldots: X_m)$ of $\textrm{PG}(m-1,\mathbb{K})$. Therefore, the automorphism group $\aut(V)$ of $V$ has a subgroup $G$ isomorphic to ${\rm{Sym}}_m$. We prove that $G=\aut(V)$ whenever $\aut(V)$ is tame, and in particular in zero characteristic; see Theorem \ref{the171021}. In Section \ref{nta} we show however that if $p=m-1$ then $\aut(V)$ is non-tame, i.e. $V$ has an automorphism of order $p$ fixing a point of $V$. In particular, the linear subgroup $L\aut(V)$ of $\aut(V)$ is isomorphic to $S_p$ and hence $\aut(V)$ properly contains $G$; see Theorem \ref{th26062024A} whose proof uses the deep Zalesski$\breve{i}$-Sere$\hat{z}$kind classification theorem from Group theory. 
In Section \ref{secqc} we investigate the quotient curves of $V$ with respect to subgroups of $G$. The most interesting cases for the choice of a  subgroup are the stabilizers of one or more coordinates. Since two such subgroups are conjugate in $G$ if they fix the same number of coordinates, it is enough to consider the stabilizer $G_{d+1,\ldots,m}$ of the coordinates $X_{d+1},\ldots,X_{m}$ in $\aut(V)$ with $1\le d \le m-1$. A careful analysis of the actions of these stabilizers on the points of $V$, carried out in Section \ref{secqc}, together with the Hurwitz genus formula, allows to compute the genus of the quotient curve $V_d=V/G_{d+1,\ldots,m}$; see Proposition \ref{pro11giugno}. The quotient curve $V_d$ turns out to be rational for $d=m-2$, otherwise its genus $\mathfrak{g}$ is quite large; see (\ref{eq11giugno}).

The projection of $V$ from the $m-d-1$ dimensional projective subspace $\Sigma$ of equations $X_1=0,\ldots,X_d=0$ is also a useful tool for the study of $V$ by virtue of the fact that $\Sigma\cap V=\emptyset$.
Let $\bar{V}_d$ be the curve obtained by projecting $V$ from the vertex $\Sigma$ to a projective subspace $\Sigma'$ of dimension $d-1$ disjoint from $\Sigma$. Comparison of $\bar{V}_d$ with the quotient curve $V_d$ shows that they are actually isomorphic; see Proposition \ref{pro5mar}. Therefore, the function field extension $\mathbb{K}(V):\mathbb{K}(\bar{V}_d)$ is Galois, that is, the vertex $\Sigma$ can be viewed as a Galois subspace (also called higher dimensional Galois-point) for $V$. This shows that there are many higher dimensional Galois-subspaces for $V$, which is a somewhat rare phenomena. For results and open problems on Galois-points; see the recent papers \cite{AR,Fu,FH,H,KLT}. In Section \ref{sq} the case $d=m-3$ is investigated more closely. An explicit equation of $V_{m-3}$ is given in Theorem \ref{the24jun}, which shows that if $\mathbb{K}$ is either an algebraic closure of the rational field $\mathbb{Q}$, or of the prime field $\mathbb{F}_p$, then $V_{m-3}$ coincides with the plane curve of degree $m-2$ investigated by Rodr\'iguez Villegas, Voloch and  Zagier \cite{voloch}, who pointed out that this curve has many points; in particular, for $m=p-1$, it is a non-singular plane curve attaining the St\"ohr-Voloch bound.

From an algebraic number theory point of view, (\ref{sy}) is a particular system of diagonal equations. Proposition \ref{prop10.03} shows that every solution of (\ref{sy}) also satisfies further diagonal equations.
On the other hand, Lemmas \ref{lem21jun21} and \ref{lem21jun21bis} imply that $\{1,2,\ldots,m\}$ provides a regular sequence of symmetric polynomials, i.e. the system of diagonal equations of $x_1^k+\cdots +x_m^k=0$ with $k$ ranging over $\{1,2,\ldots, m\}$ has only the trivial solution $(0,0,\ldots,0)$.
This gives a different proof for Proposition 2.9 of the paper of Conca, Krattenthaler and Watanabe \cite{CKW} where the authors relied on the interpretation of the $q$-analogue of the binomial coefficient as a Hilbert function. In the same paper \cite{CKW}, the general notion of a regular sequence associated to  $\mathcal{A}\subset \mathbb{N}^*$ was introduced as any system of diagonal equations $x_1^k+\cdots +x_m^{k}=0$
with $k\in \mathcal{A}$ which has only the trivial solution $(0,0,\ldots,0)$. \cite[Lemma 2.8]{CKW} shows
(over the complex field) a simple necessary condition on $\mathcal{A}$ to be a regular system, namely that $m!$ divides the product of the integers in $\mathcal{A}$. In this context, the major issue to find easily expressible sufficient conditions turned out be surprisingly difficult. For the smallest case $m=3$, the conjecture is that the above necessary condition is also sufficient, that is, $\mathcal{A}=\{a,b,c\}$ is associated to a regular system whenever $abc\equiv 0 \pmod 6$. Evidences for this conjecture were  given in \cite{CKW} and later in \cite{chen}. For further developments related to regular sequences, see \cite{DZ,FS,GGW,KM,MSW}.

In positive characteristic, an algebraic closure of the prime field $\mathbb{F}_p$ with $p\ge 7$ is chosen for $\mathbb{K}$ and  two extremal cases are investigated further, namely $m=5$ and $m=p-1$. In the former case, $V$ can be viewed as the characteristic $p$ version of the Bring curve, and for infinitely many primes $p$ we prove that $V$ is an $\mathbb{F}_{p^2}$-maximal curve, that is the number of points of $V$ defined over $\mathbb{F}_{p^2}$ attains the Hasse-Weil upper bound $p^2+1+2\mathfrak{g}p=p^2+1+8p$. We also show that the smallest such primes are $29,59,149,239,839$. Our result gives a new contribution to the study of $\mathbb{F}_{p^2}$-maximal curves of small genera, initiated by Serre in 1985 and continued until nowadays; see \cite{bgkm,gklm,gkm,ser}. For $m=p-1$, $V$ is never $\mathbb{F}_{p^2}$-maximal instead. In fact, the number of points of $V$ does not increase passing from $\mathbb{F}_p$ to $\mathbb{F}_{p^2}$ so that $V$ has as many as $(p-2)!$ points defined over $\mathbb{F}_{p^2}$ whereas its genus is equal to $\frac{1}{4}(((p-3)(p-4)-4)(p-3)!)+1$; see Lemmas \ref{lem1210}, \ref{lem11octE}, \ref{lem12oct}, and Theorem \ref{nopuntiFp2}. A more general question for $m=p-1$ is to determine the length of the set $V(\mathbb{F}_{p^i})$ consisting of all points of $V$ in $\PG(p-2,\mathbb{F}_{p^i})$. For $i=1$, Lemma \ref{lem1210} shows $V(\mathbb{F}_{p})=(p-2)!$. Unfortunately, the elementary argument used in the proof of Lemma \ref{lem1210},  seems far away from being sufficient to tackle the general case. Even in the second particular case $i=2$, settled in Theorem \ref{nopuntiFp2}, the proof relies on the St\"ohr-Voloch bound and on the structure of the $G$-orbits on the points of $V$ determined in Section \ref{secautgroup}. On the other hand, Theorem \ref{the181021} yields $|V(\mathbb{F}_{p^d})|> |V(\mathbb{F}_{p})|$ where $d$ stands for the smallest integer such that $(p-2)\mid (p^d-1)$. The proof uses methods from Galois theory. It remains unsolved the problem whether $|V(\mathbb{F}_{p^i})|> |V(\mathbb{F}_{p})|$ holds for some intermediate value $i$.

An important question on the geometry of curves is to compute the possible intersection multiplicities $I(P,V\cap \Pi)$ between $V$ and hyperplanes $\Pi$ through a point $P$ generally chosen in $V$, also called orders. In the classical case (zero characteristic), the orders are precisely the non negative integers smaller than the dimension of the space where the curve is embedded, but this may fail in positive characteristic. If this occurs, then the curve is called non-classical. For plane curves, non-classicality means that all non-singular points of the curve are flexes.
In our case, Lemma \ref{lem17ag21} shows the existence of a hyperplane $\pi$ such that $I(P,V\cap \Pi)$ is at least $p$. Since $V$ is embedded in $\textrm{PG}(p-3,p)$, this implies that $V$ is non-classical; see Theorem \ref{the17ag21}. In Section \ref{secc1}, the intersection multiplicities $I(P,V\cap \Pi)$ for $P\in V(\mathbb{F}_p)$ are investigated. By Proposition \ref{pro211021}, such intersection multiplicities are $1,2,3$. This, together with Lemma \ref{lem17ag21}, yields that $1,2,3,p$ are orders of $V$. 

Another concept of non-classicality, due to St\"ohr and Voloch \cite{sv}, arose from their studies on the maximum number $N_{p^i}=N_{p^i}(\mathfrak{g},r,d)$ of points of an irreducible  curve of a given genus $\mathfrak{g}$, embedded in $\textrm{PG}(r,\mathbb{F}_{p^i})$  as a curve of degree $d$ can have. The St\"ohr-Voloch  upper bound on $N_{p^i}=N_{p^i}(\mathfrak{g},r,d)$ is known to be a deep result; in particular it may be used to give a proof for the  Hasse-Weil upper bound. Also, the St\"ohr-Voloch bound shows that the curves with large  $N_{p^i}=N_{p^i}(\mathfrak{g},r,d)$ have the property that the osculating hyperplane to the curve at a generically chosen point on it also passes through the Frobenius image of the point. A curve with this purely geometric property can only exist in positive characteristic, and if this occurs the curve is called Frobenius non-classical. Apart from the trivial case $p^i=2$, Frobenius non-classical curves are also non-classical. Theorem \ref{the191021} shows that $V$ is Frobenius non-classical. However this does not hold true for the quotient curves of $V$ with respect to the stabilizer of the coordinates. For instance, for $m=p-1$, $V_{m-3}$ has degree $p-3<p$ and hence it is a classical (and Frobenius classical) plane curve.

Over a finite field, the number of solutions of a system of diagonal equations has been the subject of many papers where the authors mainly rely on character sums and the distributions of their values. In the present paper we are not moving in this direction.
We point out a connection between (\ref{sy}) over $\mathbb{F}_p$ and R\'edei's work related with the famous Minkowski conjecture, originally proven by Haj\'os in 1941. R\'edei proved that Minkowski conjecture holds if the following claim is true: if an elementary abelian group of order $p^2$ is factored as the product of two sets of length $p$, both containing the identity element, then at least one of the factors is a subgroup; see \cite{redei}. R\'edei and later on Wang, Panico and Szab\'o \cite{szabo} showed this claim is true if each solution $[\xi_1,\ldots,\xi_p]$ over $\mathbb{F}_p$  of the system of diagonal equations
\begin{equation}
\label{syA}\left\{
\begin{array}{llll}
X_1 + X_2 + \ldots + X_p=0;\\
X_1^2  +  X_2^2 + \ldots + X_p^2=0;\\
\cdots\cdots\\
\cdots\cdots\\
X_1^{(p-1)/2}  +  X_2^{(p-1)/2}  +  \ldots+X_p^{(p-1)/2}=0.
\end{array}
\right.
\end{equation}
 has either equal components or $[\xi_1,\ldots,\xi_p]$ is a permutation of the elements of $\mathbb{F}_p$. That such particular $p$-tuples are exactly the solutions over $\mathbb{F}_p$ of system (\ref{syA}) was first shown by R\'edei himself in \cite{redei}. The above quoted paper by  Wang, Panico and Szab\'o also contains a proof.  In Section \ref{secredei},  we give a geometric
interpretation of their results in terms of the variety $V$.

\section{Basic definitions and notation}
\label{back} In this paper, $p\ge 7$ stands for a prime and $\mathbb{K}$ for an algebraically closed field of characteristic zero or $p$. Also, we let $m\geq 6$, and assume $m\leq p-1$ when $\mathbb{K}$ has characteristic $p$. For a projective, geometrically irreducible, non-singular curve $\cX$, the automorphism group of $\cX$ fixing $\mathbb{K}$ element-wise is denoted by $\aut(\cX)$. If $\cX$ has genus $\gg\ge 2$ then $\aut(\cX)$ is finite and Henn's classification theorem states that $|\aut(\cX)|<8 \gg^3$ with exactly two exceptions, namely the Hermitian curve and the Roquette curve. In the exceptional cases, no subgroup of $\aut(\cX)$ is isomorphic to $\rm{Alt}_m$; see \cite[Theorem 11.127]{HKT}.

\subsection{Action of $\rm{Sym}_m$ as a projectivity group of $PG(m-2,\mathbb{K})$}
Henceforth, ${\bf{V}}_{m}$ denotes the $m$-dimensional $\mathbb{K}$-vector space  with
basis $\{X_1,X_2,\ldots,X_{m}\}$, and $\textrm{PG}(m-1,\mathbb{K})$  the $(m-1)$-dimensional projective space over $\mathbb{K}$ arising from  ${\bf{V}}_{m}$.

The group $G={\rm{Sym}}_{m}$, viewed as the symmetric group on $\{X_1,\ldots, X_{m}\}$, has a natural faithful representation in ${\bf{V}}_{m}$, where  $g\in G$ takes the vector
${\bf{v}}=\sum_{i=1}^{m}\lambda_iX_i$ to the vector
${\bf{v}}'=\sum_{i=1}^{m}\lambda_ig(X_i)$. With ${\bf{v}}'=g({\bf{v}})$, $g$ becomes a linear transformation of ${\bf{V}}_{m}$ and $G$ is viewed as a subgroup of $\textrm{GL}(m,\mathbb{K})$. Since no non-trivial element of
$G$ preserves each $1$-dimensional subspace of ${\bf{V}}_{m}$, $G$ acts faithfully on
$\textrm{PG}(m-1,\mathbb{K})$ and $G$ can also be regarded as a subgroup of the projective linear group $\textrm{PGL}(m,\mathbb{K})$ of $\textrm{PG}(m-1,\mathbb{K})$.
For $1\le i < j \le m$, the transposition $\sigma=(X_iX_j)$, as a linear transformation of ${\bf{V}}_{m}$, is associated with the $(0,1)$-matrix $(g_{l,n})_{l,n}$ whose $1$ entries are $g_{l,n}$ with $l=n$ and $g_{ij},
g_{ji}$. As a projectivity of $\textrm{PG}(m-1,\mathbb{K})$, $\sigma$ is the involutory homology whose center is the point
$C=(0:\cdots: 1:\cdots:-1:\cdots:0)$, where $1$ and $-1$ are in $i$-th and $j$-th positions, respectively,
and whose axis is the hyperplane $\Pi$ of equation $X_i=X_j$. Therefore, the fixed points of $\sigma$ are those of $\Pi$ and $C$, whereas the fixed hyperplanes of $\sigma$ are those through $C$
and $\Pi$. Since any two transpositions with no common fixed point commute, the center of either lies on the axis of the other.
The $m$-cycle $\sigma=(X_{m}X_{m-1}\ldots X_1)$, as a linear transformation of ${\bf{V}}_{m}$, is associated with the $(0,1)$-matrix $(g_{l,n})_{l,n}$ whose $1$ entries are $g_{l,l+1}$ for  $l=1,\ldots,m-1$ and $g_{m,1}$. As a
projectivity, $\sigma$ fixes the point $P_\omega=(\omega:\omega^2:\cdots:\omega^{m}=1)$, where $\omega$ is any element whose order divides $m$. Similarly, the $(m-1)$-cycle $\sigma=(X_{m-1}X_{m-2}\cdots X_1)$, as a
linear transformation of ${\bf{V}}_{m}$, is associated with the $(0,1)$-matrix $(g_{l,n})_{l,n}$ whose $1$ entries are $g_{l,l+1}$ for $l=1,\ldots,m-2$ and $g_{m-1,1},g_{m,m}$. As a projectivity, $\sigma$ fixes the point
$P_\varepsilon=(\varepsilon:\varepsilon^2:\cdots:\varepsilon^{m-1}=1:0)$ for any element $\varepsilon$ of $\mathbb{K}$ whose order divides $m-1$.
\subsection{Projections}
We also recall how the projection from a linear subspace $\Sigma$ of dimension $d$ to a disjoint linear subspace $\Sigma'$ is performed.



If $\cC$ is an irreducible, non-singular algebraic curve embedded in $\textrm{PG}(m-1,\mathbb{K})$, which is disjoint from $\Sigma$, then the projection
determines a regular mapping $\pi: \cC \rightarrow \textrm{PG}(m-2-d,\mathbb{K})$.  The geometric interpretation of $\pi$ is straightforward. Take any ($m-2-d$)-dimensional subspace $\Sigma'$ of $\textrm{PG}(m-1,\mathbb{K})$ disjoint from
$\Sigma$. Then
through any point of $P\in \cC$ there is a unique ($d+1$)-dimensional subspace $\Lambda_P$ containing $\Sigma$ and this subspace meets $\Sigma'$ in a unique point, the image $\pi(P)$ of $P$ projected from $\Sigma$.
Moreover, $\cC$ is projected into an irreducible, possibly singular, curve $\cF$ embedded in $\Sigma'\cong \textrm{PG}(m-2-d,\mathbb{K})$.\\ In the particular case where $\Sigma$ and $\Sigma^{\prime}$ are the subspaces defined as the intersections of the hyperplanes of equations $X_{d+2},\ldots,X_{m}=0$ and $X_{1},\ldots,X_{d+1}=0$, respectively, then the projection from the vertex $\Sigma$ to $\Sigma^{\prime}$ maps a point $P=(x_1:\ldots:x_{m})\in \textrm{PG}(m-1,\mathbb{K})\setminus \Sigma$  to the point $P'=(0:\ldots:0:x_{d+2}:\ldots:x_{m})$.\\
Let $H$ be the subgroup of $\textrm{PGL}(m-1,\mathbb{K})$ which preserves $\cC$ and fixes each $d+1$-dimensional subspace through $\Sigma$. The points of the quotient curve $\bar{\cC}=\cC/H$ can be viewed  as $H$-orbits of the points of $\cC$. Therefore every point $\bar{P}\in\bar{C}$ is identified by a unique $H$-orbit, say $\overline{H_P}$.
Moreover, such an $H$-orbit $\overline{H_P}$ is contained in a unique ($d+1$)-dimensional subspace through $\Sigma$, which also meets $\cC$ in a unique point.  Therefore, the notation $\Lambda_P$ for that subspace
passing through the point $P\in\cC$ is meaningful. Thus the sequence $\bar{P}=\overline{H_P}\rightarrow \Lambda_P \rightarrow \pi(P)$ is well defined. Actually it is a
surjective homomorphism from $\bar{\cC}$ to $\cF$. It is bijective if and only if $|H|=|\Lambda_P\cap\cC|$ for all but finitely many points $P\in \cC$.
In this case $\Sigma$ is called a $d$-dimensional outer \emph{Galois subspace}, and for $d=0$ an outer \emph{Galois}-point; see \cite{KLT}.
If $N$ is a subgroup of the normalizer of $H$ in the $\mathbb{K}$-automorphism group of $\cC$ then the quotient group $N/H$ is a subgroup of the $\mathbb{K}$-automorphism group of $\bar{\cC}$.
The genera $\mathfrak{g}(\cC)$ and $\mathfrak{g}(\bar{\cC})$ of the curves   $\cC$ and $\bar{\cC}$ are linked by the Hurwitz genus formula. In particular, if $H$ is tame, that is, the characteristic of $\mathbb{K}$ is either zero, or
equal $p$ with $p$ prime to the order $\ell=|H|$ of $H$, then
\begin{equation}
\label{hufo}
2\mathfrak{g}(\cC)-2=|H| (2\mathfrak{g}(\bar{\cC})-2)+\sum_{i=1}^m(\ell-\ell_i),
\end{equation}
where $\ell_1,\ldots,\ell_r$ denote the lengths of the short-orbits of $H$ on $\cC$.

The equations in (\ref{sy}) define a (possibly reducible and singular) projective variety $V$ of $\textrm{PG}(m-1,\mathbb{K})$ so that the points of $V$ are the nontrivial solutions of (\ref{sy}) up to a non-zero scalar.
Actually, $V$ is contained in the hyperplane of $\textrm{PG}(m-1,\mathbb{K})$ of equation $X_1+X_2+\ldots X_{m}=0$. Therefore, $V$ is a projective variety embedded in $\textrm{PG}(m-2,\mathbb{K})$. Moreover, $G={\rm{Sym}}_{m}$
preserves $V$ and no nontrivial element of $G$ fixes $V$ pointwise. Therefore, $G$ is a subgroup of the $\mathbb{K}$-automorphism group of $V$.

\subsection{B\'ezout's theorem}
The higher dimensional generalization of B\'ezout's theorem about the number of common points of $m-1$ hypersurfaces $\cH_1,\ldots,\cH_{m-1}$ of $\textrm{PG}(m-1,\mathbb{F}_q)$ states that either that number is infinite or does
not exceed the product $\deg(\cH_1)\cdot\ldots \cdot\deg(\cH_{m-1})$.
For a discussion on B\'ezout's theorem and its generalization; see \cite{Vog}.

\subsection{Background on algebraic curves}
We report some background from \cite[Chapter 7]{HKT}; see also \cite{sv}.
Let $\Gamma$ be a projective, absolutely irreducible, not necessarily non-singular curve, embedded in a projective space ${\rm{PG}}(r,\mathbb{K})$.
For a non-singular model $\cX$ of $\Gamma$, let $\mathbb{K}(\cX)=\mathbb{K}(\Gamma)$ denote the function field of $\cX$. There exists a bijection between the points of $\cX$, and the places of $\mathbb{K}(\cX)$ and the branches of $\Gamma$.
For any point $P\in\Gamma$ (more precisely, for any branch of $\Gamma$ centered at $P$) the different intersection multiplicities of hyperplanes with the curve at $P$ are considered. There is only a finite number of these intersection multiplicities, the number being equal to $r+1$. There is a unique hyperplane, called osculating hyperplane, with the maximum intersection multiplicity. The hyperplanes cut out on $\Gamma$ a simple,
fixed point free, not-necessarily complete linear series $\Sigma$ of dimension $r$ and degree $n$, where $n$ is the degree of the curve $\Gamma$. An integer $j$ is a $(\Sigma,P)$-order if there is a hyperplane $H$ such that $I(P,H\cap \Gamma)=j$. Notice that, if $P$ is a singular point, then $P$ is intended as a branch of $\Gamma$ centered at $P$.  In the case that $\Sigma$ is the canonical series, it follows from the
Riemann--Roch theorem that $j$ is a $(\Sigma,P)$-order if and only if $j+1$ is a
Weierstrass gap. For any non-negative integer $i$, consider the set of all hyperplanes $H$
of $\PG(r,\mathbb{K})$ for which the intersection number is at least $i$. Such
hyperplanes correspond to the points of a subspace $\overline{\Pi}_i$ in
the dual space of $\PG(r,K)$. Then we have  the decreasing chain
$$
\PG(r,K) = \overline{\Pi}_0 \supset
\overline{\Pi}_1 \supset \overline{\Pi}_2 \supset \cdots.
$$

An integer $j$ is a $(\Sigma,P)$-order if and only if $\overline{\Pi}_j$ is not equal to the subsequent space in the chain.
In this case $\overline{\Pi}_{j+1}$ has
codimension 1 in $\overline{\Pi}_j$. Since $\deg\,\, \Sigma=n$, we have that $\overline{\Pi}_i$ is
empty as soon as $i> n$. The number of $(\Sigma,P)$-orders is exactly
$r+1$; they are $j_0(P),j_1(P),\ldots,j_r(P)$ in increasing order, and
$(j_0(P),j_1(P),\ldots,j_r(P))$ is the order-sequence of $\Gamma$ at $P$.

Here $j_0(P)=0$, and
$j_1(P)=1$ if and only if the branch $P$ is linear (in particular when $P$ is a non-singular point).

Consider the intersection $\Pi_i$ of hyperplanes $H$ of $\PG(r,K)$,
for which
$$
I(P,H\cap\g)\geq j_{i + 1}.
$$ Then the flag
$
\Pi_0\subset \Pi_1\subset \cdots \subset \Pi_{r-1} \subset
\PG(r,K)
$
can be viewed as the algebraic analogue of the Frenet
frame in differential geometry.
 Notice that $\Pi_0$ is just $P$, the
centre of the branch $\g$, and $\Pi_1$ is the tangent line to the
branch $\g$ at $P$. 
Furthermore, $\Pi_{r-1}$ is the osculating hyperplane at $P$.

The order-sequence is the same for all but finitely many points of $\Gamma$, each such exceptional point is called a $\Sigma$-Weierstrass point of $\Gamma$.
The order-sequence at a generally chosen point of $\Gamma$ is the order sequence of $\Gamma$ and denoted by $(\varepsilon_0,\varepsilon_1,\ldots,\varepsilon_r)$. Here $j_i(P)$ is at least $\varepsilon_i$ for $0\le i \le r$ at any point of $\Gamma$.

Now let $\Gamma$ be defined over $\mathbb{F}_\ell$ and viewed as a curve over the algebraic closure $\mathbb{K}=\bar{\mathbb{F}}_\ell$. St\"ohr and Voloch \cite{sv} (see also \cite[Chapter 8]{HKT}) introduced a divisor with support containing all points of $P\in \Gamma$ for which the osculating hyperplane contains the Frobenius image $\Phi(P)$ of $P$. Since every $\mathbb{F}_\ell$-rational point has this property, an upper bound on the number of $\mathbb{F}_\ell$-rational points is obtained, unless all (but finitely many) osculating hyperplanes have that property. In such an exceptional case, the curve is called Frobenius non-classical. Curves with many rational points are often Frobenius non classical (and in particular, non-classical for $q\neq 2$). Actually, St\"ohr and Voloch were able to give
a bound on the number of $\mathbb{F}_\ell$-rational points for any (Frobenius classical, or non-classical) curve $\Gamma$. There exists a sequence of increasing non-negative integers
$\nu_0,\ldots,\nu_{r-1}$ with $\nu_0\geq 0$ such that
$
\det(W_{\gz}^{\nu_0,\ldots,\nu_{r-1}}(x_0,\ldots,x_r)) \neq 0.
$
In fact, if the
$\nu_i$ are chosen minimally in lexicographical order$,$ then there
exists an integer $I$ with $0 < I \leq r$ such that
$$
\nu_i = \left\{ \begin{array}{ll}
\varepsilon_i & \quad \mbox{for $i < I$},\\
\varepsilon_{i+1} & \quad \mbox{for $i \geq I$}.
\end{array}\right.
$$

The St\"ohr-Voloch divisor of $\Gamma$ is
$$ S = \Div(W_{\gz}^{\nu_0,\ldots,\nu_{r-1}}(x_0,\ldots,x_r)) +
(\nu_1 + \cdots + \nu_{r-1})\,\Div(d\gz) + (q + r)E,
$$
where $E$ and $e_P$ are defined as before for the ramification divisor.
The sequence $(\nu_0,\nu_1,\ldots,\nu_{r-1})$ is the Frobenius order-sequence, and $\Gamma$ is Frobenius-classical if $\nu_i=i$ for $0\le i \le r$. Also,
$\deg\, S = (\nu_1 + \cdots + \nu_{r-1})(2\mathfrak{g} - 2) + (q+r)n$, where $n=\deg(\Gamma).$ From this the St\"ohr-Voloch bound follows:
$$|\Gamma(\mathbb{F}_\ell)|\le \frac{1}{r}\Big( (\nu_1 + \cdots+ \nu_{r-1})(2\mathfrak{g}-2) + (\ell +r)n\Big),$$
which is a deep result on the number of $\mathbb{F}_{\ell}$-rational points of $\mathbb{F}_\ell$-rational curves. For instance, the Hasse-Weil upper bound was re-proven in \cite{sv}; see also \cite[Chapter 9]{HKT}.

\subsection{Preliminary results from projective geometry and linear groups in positive characteristic}
 In the $r$-dimensional projective space $PG(r,\mathbb{K})$ with $r\ge 3$ and defined over $\mathbb{K}$, a (non-trivial) projectivity $u\in PGL(r+1,\mathbb{K})$ is a {\emph{perspectivity}} if $u$ fixes all points of a hyperplane called the axis, and preserve all lines through a point, called the center. From now on in this subsection, we assume $\mathbb{K}$ to be of positive characteristic $p\ge 7$. Then, $u$ is either an \emph{elation} or an \emph{homology} according as the axis contains the center or does not. Since $p\ne 2$, no elation preserves a non-singular quadric of $PG(r,\mathbb{K})$.
 
 Let $\mathbb{F}_q$ be a finite subfield of $\mathbb{K}$ of order $q$. 
 We recall some basic facts about projective orthogonal groups over $\mathbb{F}_q$.  If $r$ is even then there is up to conjugacy in $PGL(r+1,q)$ just one projective orthogonal group $P\Omega_{r+1}(q)$ whereas for $r$ odd there are two, $P\Omega^+(r+1,q)$ and $P\Omega^-(r+1,q)$.

A projective orthogonal group has exactly two classes of conjugate involutory homologies, each of them generates an index $2$ subgroup $U$ containing no elations. Furthermore,
$$|U|=
\begin{cases}
\ha q^{n^2}(q^2-1)(q^4-1)\cdots (q^{2n}-1)\,\, {\mbox{for $r=2n$,}}\\
\ha q^{n(n-1}(q^2-1)(q^4-1)\cdots (q^{2n}-2)(q^n-1)\,\, {\mbox{for $r=2n-1$ and $U<P\Omega^+(r+1,q)$,}}\\
\ha q^{n(n-1}(q^2-1)(q^4-1)\cdots (q^{2n}-2)(q^n+1)\,\, {\mbox{for $r=2n-1$ and $U<P\Omega^-(r+1),q)$.}}
\end{cases}
$$
 The following result is a consequence of the classification theorem of Zalesski{\rm{$\breve{i}$}} and Sere$\hat{z}$kin \cite{ZS}; see also \cite{AW}.
\begin{result}
\label{zales} Let $N\le PGL(r+1,\mathbb{K})$  be a finite irreducible subgroup of the projective orthogonal group $P\Omega(r+1,\mathbb{K})$ for $r$ even, and of $P\Omega^+(r+1,\mathbb{K})$ or $P\Omega^-(r+1,\mathbb{K})$ for $r$ odd. Assume that $N$  is generated by a conjugacy class of involutory homologies. If $r\ge 8, p>r$ and $p$ divides $|N|$
then $N$ is conjugate in $PGL(r+1,\mathbb{K})$ either to a projective orthogonal group over a finite subfield of $\mathbb{K}$, or to one of its two subgroups $U$ generated by a class of conjugate involutory homologies, unless $N\cong \rm{Sym}_{r+1}$ or $N\cong \Sym_p$ with $p=r+3$.
\end{result}
As a corollary, we have the following result.
\begin{result}
\label{zalesA} Let $N\le PGL(m-1,\mathbb{K})$  be a finite irreducible subgroup of the projective orthogonal group $P\Omega(m-1,\mathbb{K})$ for $m$ odd, and of $P\Omega^+(m-1,\mathbb{K})$ or $P\Omega^-(m-1,\mathbb{K})$ for $m$ even. Assume that $N$  is generated by involutory homologies. If $m\ge 8, p\ge m+1$ and $p$ divides $|N|$
then either
\begin{equation}
\label{eq270624}
|N|>8(\ha((m-2)(m-3)-4)(m-2)!)+2)^3,
\end{equation}
or $p=m+1$ and $N$ is isomorphic to $\Sym_p$.
\end{result}

We also report the following classification theorem from Guralnick; see \cite{gur}.
\begin{result}
\label{gu} Let $\Gamma$ be an almost simple transitive permutation group of prime degree $p$. Then, up to an isomorphism, $T\le \Gamma\le \aut(T)$ for some non-abelian simple group $T$ where $T$ is isomorphic to one of the following groups:
\begin{itemize}
\item[(i)] $\rm{Alt}_p$;
\item[(ii)]$PSL(r,d^h)$ with $p=(d^r-1)/d-1)$, $r$ prime, and $PSL(r,d)\le \Gamma \le P\Gamma L(r,d)$;
\item[(iii)] $PSL(2,11)$ for $p = 11$;
\item[(iv)] $M_{11}$ for $p = 11$, or $M_{23}$ for $p = 23$.
\end{itemize}
\end{result}
\begin{result}
\label{shar} For an odd prime power $d$, let $H$ be a cyclic subgroup of $P\Gamma L(r,d)$ fixing a point $P$. Then $H$ is not sharply transitive on the remaining $d^{r-1}+\ldots+d$ points of $PG(r,d)$.
\end{result}
\begin{proof} Look at the set $\cL$ consisting of all lines through $P$. The subgroup $H$ is a collineation group of $PG(r,d^h)$ and induces on $\cL$ a transitive permutation group. The point $P$ is the center of each non-trivial collineation in the kernel $K$ of the permutation representation of $H$ on $\cL$. Thus, $\bar{H}=H/K$ is a collineation group of the quotient projective space $PG(r-1,d)$ of $PG(r,d)$ with respect to $\cL$. Clearly, $\bar{H}$ acts transitively on the set of all points of $PG(r-1,d)$. Since $H$ is cyclic, this action is regular. Hence, $|\bar{H}|$ is equal to the number of points of $PG(r-1,d)$. From $d^{r-1}+\ldots+d=|H|=|\bar{H}||K|=(d^{r-2}+\ldots+d+1)|K|$ whence $|K|=d$. Since $d$ is a power of the characteristic of the coordinatizing field of $PG(r,d)$, and $K$ has order $d$, it turns out that $K$ is an elation group. Let $\pi$ be the axis of $K$. Since $K$ centralizes $H$, it turns out that $\pi$ is preserved $H$. But then $H$ is not transitive on the set of points of $PG(r,d)$ different from $P$.
\end{proof}
\subsection{Power sums and elementary symmetric polynomials}
\label{powersum}
In $\mathbb{K}[X_1,\ldots,X_m]$, let
$$
p_k(X_1, \ldots, X_m) = \sum_{i=1}^{m} X_i^k, \quad k\ge 1
$$
and let $\sigma_k(X_1, \ldots, X_n)$ be the $k^{th}$ elementary symmetric polynomial for $k\ge 0$.  The Newton identities are
$$
k \sigma_k(X_1, \ldots, X_m) = \sum_{i=1}^{k} (-1)^{i-1} \sigma_{k-i}(X_1, \ldots, X_m) p_i(X_1, \ldots, X_m), \quad m\ge k\ge 1.
$$
whence
\begin{equation*}
p_{k}(X_{1},\ldots ,X_{m}) = (-1)^{k-1}k \sigma_{k}(X_{1},\ldots ,X_{m}) + \sum_{i=1}^{k-1} (-1)^{k-1+i} \sigma_{k-i}(X_{1},\ldots ,X_{m}) p_{i}(X_{1},\ldots ,X_{n}),\quad 1\le k\le m.
\end{equation*}
Therefore, (\ref{sy}) is equivalent to system
\begin{equation}
\label{sysA}
\sigma_1(X_1,\ldots,X_m)=\sigma_2(X_1,\ldots,X_m)=\cdots =\sigma_{m-2}(X_1,\ldots,X_m)=0.
\end{equation}

\section{Some particular solutions of system  (\ref{sy}) of diagonal equations}

\begin{lemma}
\label{lem6pct} Let $\varepsilon\in\mathbb{K}$ be a $(m-1)$-th primitive root of unity. Then $(\varepsilon,\varepsilon^2,\ldots,\varepsilon^{m-1}=1,0)$ is a solution of system \eqref{sy}.
\end{lemma}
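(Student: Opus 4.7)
The proof is a direct substitution, so my plan is simply to verify each equation in (\ref{sy}) for the proposed tuple. First I would observe that since $m\le p-1$ in positive characteristic, we have $\gcd(p,m-1)=1$, so $\mathbb{K}$ indeed contains a primitive $(m-1)$-th root of unity $\varepsilon$, and the vanishing polynomial $x^{m-1}-1$ splits with distinct roots $\varepsilon,\varepsilon^2,\ldots,\varepsilon^{m-1}=1$.

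Fix $k$ with $1\le k\le m-2$ and plug in $(X_1,\ldots,X_m)=(\varepsilon,\varepsilon^2,\ldots,\varepsilon^{m-1},0)$. The $k$-th power sum becomes
\[
\sum_{i=1}^{m-1}\varepsilon^{ik}=\sum_{i=1}^{m-1}(\varepsilon^k)^i.
\]
Set $\zeta=\varepsilon^k$. The key step is to note that $\zeta\ne 1$: indeed, $\varepsilon$ has multiplicative order exactly $m-1$, and $1\le k\le m-2<m-1$ prevents $\varepsilon^k=1$. At the same time $\zeta^{m-1}=(\varepsilon^{m-1})^k=1$, so $\zeta$ is a root of $x^{m-1}-1=(x-1)(x^{m-2}+x^{m-3}+\cdots+1)$, and being different from $1$, it is a root of the cofactor, giving $1+\zeta+\zeta^2+\cdots+\zeta^{m-2}=0$.

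Multiplying by $\zeta$ (or simply re-indexing) yields $\sum_{i=1}^{m-1}\zeta^i=0$, which is precisely the $k$-th equation of system (\ref{sy}) evaluated at the proposed tuple. Since the last coordinate $0$ contributes nothing to any power sum, this handles all $k$ with $1\le k\le m-2$ uniformly, and the lemma follows.

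There is no real obstacle here; the only mild point worth flagging is ensuring the primitive $(m-1)$-th root of unity exists in $\mathbb{K}$ under the standing hypothesis $m\le p-1$ in positive characteristic, which is immediate from $\gcd(p,m-1)=1$ and the algebraic closedness of $\mathbb{K}$.
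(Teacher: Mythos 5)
Your proof is correct and follows essentially the same route as the paper: both arguments reduce the $k$-th equation to the geometric sum $\sum_{i=1}^{m-1}\zeta^i$ for the nontrivial $(m-1)$-th root of unity $\zeta=\varepsilon^k$ and observe that it vanishes. Your extra remark verifying that a primitive $(m-1)$-th root of unity exists in $\mathbb{K}$ (since $m\le p-1$ forces $p\nmid m-1$) is a harmless and reasonable addition that the paper leaves implicit.
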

\begin{proof}
Take an integer $i$ such that $1\le i \le m-2$, and let $\theta=\varepsilon ^i$. Then $\theta$ is a ($m-1$)-th root of unity (non necessarily primitive). Furthermore, $\theta\neq 1$ as $\varepsilon$ is a primitive
($m-1$)-th root of unity.  The $i$-th equation in (\ref{sy}) is satisfied by $(\varepsilon,\varepsilon^2,\ldots,\varepsilon^{m-1},0)$ if and only if $\sum_{k=1}^{m-1} \theta^k=0$.  This sum equals
$(\theta^{m}-1)/(\theta-1)-1$, and hence it is zero.
\end{proof}
The same argument also proves the following result.
\begin{lemma}\label{lem1210}
Let $\omega\in \mathbb{K}$ be a $m$-th primitive root of unity. Then $(\omega,\omega^2,\ldots,\omega^{m}=1)$ is a solution of system \eqref{sy}.
\end{lemma}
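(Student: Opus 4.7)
The plan is to mimic the argument of the preceding Lemma \ref{lem6pct} almost verbatim, since the situation is completely parallel; the only change is that the sum now runs over all $m$ exponents rather than $m-1$, and the primitive root has order $m$ rather than $m-1$.

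Fix $k$ with $1\le k\le m-2$ and set $\theta=\omega^k$. I would first observe that $\theta$ is an $m$-th root of unity, because $\theta^m=\omega^{km}=(\omega^m)^k=1$. Next, I would check that $\theta\neq 1$: since $\omega$ has order exactly $m$, the equality $\omega^k=1$ would force $m\mid k$, which is excluded by $1\le k\le m-2<m$. Note that this is where the hypothesis $m\le p-1$ (in positive characteristic) is quietly used, as it guarantees that a primitive $m$-th root of unity actually exists in $\mathbb{K}$ and that $\theta-1$ is nonzero rather than zero modulo $p$.

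With $\theta\neq 1$ and $\theta^m=1$ in hand, the $k$-th equation of \eqref{sy} evaluated at $(\omega,\omega^2,\ldots,\omega^m=1)$ becomes
\[
\sum_{i=1}^{m}\omega^{ik}=\sum_{i=1}^{m}\theta^{i}=\theta\cdot\frac{\theta^{m}-1}{\theta-1}=0,
\]
where the geometric-series identity is legitimate precisely because $\theta\neq 1$. Since this holds for every $k\in\{1,\ldots,m-2\}$, every equation of \eqref{sy} is satisfied.

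There is no real obstacle: the argument is an immediate specialization of the geometric-series trick. The only subtle point worth flagging is the verification $\theta\neq 1$, which relies on $\omega$ being \emph{primitive} of order $m$ together with the strict inequality $k<m$; without primitivity, $\omega^k$ could equal $1$ for some intermediate $k$ and the division by $\theta-1$ would be invalid.
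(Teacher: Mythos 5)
Your proof is correct and is exactly the argument the paper intends: the paper proves Lemma \ref{lem6pct} via the geometric-series identity and then states that "the same argument" gives Lemma \ref{lem1210}, which is precisely the specialization you carry out (with the minor simplification that here $\sum_{i=1}^{m}\theta^i=\theta(\theta^m-1)/(\theta-1)$ vanishes directly, with no need for the extra "$-1$" correction used in the $(m-1)$-th root case). Your remark that $m\le p-1$ ensures $p\nmid m$, hence the existence of a primitive $m$-th root of unity in $\mathbb{K}$, is a sensible observation consistent with the paper's standing hypotheses.
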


\begin{lemma}
\label{lem11octA} System (\ref{sy}) has no nontrivial solution $(x_1,x_2,\ldots,x_{m-1},x_{m})$ for $x_{m-1}=x_{m}=0.$
\end{lemma}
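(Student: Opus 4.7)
The plan is to reduce the claim to a classical consequence of the Newton--Girard identities. Setting $x_{m-1}=x_m=0$ collapses system (\ref{sy}) to
\[
p_k(x_1,\ldots,x_{m-2})=x_1^k+x_2^k+\cdots+x_{m-2}^k=0,\qquad 1\le k\le m-2,
\]
so the problem is to show that if the first $m-2$ power sums of $m-2$ unknowns vanish, then all the unknowns vanish.

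First I would invoke Newton's identities, which relate the power sums $p_k$ to the elementary symmetric functions $e_k$ of $x_1,\ldots,x_{m-2}$ via
\[
k\,e_k=\sum_{i=1}^{k}(-1)^{i-1}e_{k-i}\,p_i,\qquad 1\le k\le m-2.
\]
Arguing by induction on $k$, the hypothesis $p_i=0$ for every $i\le m-2$ yields $k\,e_k=0$. Under the standing assumption $m\le p-1$ (hence $k\le m-2\le p-3<p$), the integer $k$ is nonzero in $\mathbb{K}$, so $e_k=0$ for every $k=1,\ldots,m-2$. In characteristic zero the divisions are of course unconditionally allowed.

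Once every elementary symmetric polynomial $e_1,\ldots,e_{m-2}$ vanishes, the monic polynomial with roots $x_1,\ldots,x_{m-2}$ is
\[
\prod_{i=1}^{m-2}(T-x_i)=T^{m-2}-e_1 T^{m-3}+\cdots+(-1)^{m-2}e_{m-2}=T^{m-2},
\]
which forces $x_1=\cdots=x_{m-2}=0$. Together with $x_{m-1}=x_m=0$ this gives the trivial solution, completing the argument.

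The only delicate point is the applicability of Newton's identities, which fails when the characteristic divides one of the integers $1,2,\ldots,m-2$. The hypothesis $m\le p-1$ was tailored precisely to avoid this obstruction, so no further technical work is needed.
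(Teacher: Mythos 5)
Your argument is correct, but it proceeds by a genuinely different route from the paper. You reduce the statement to the classical fact that if the first $n$ power sums of $n$ quantities vanish (in characteristic zero or characteristic exceeding $n$), then all quantities vanish, and you prove this via the Newton--Girard recursion $k\,e_k=\sum_{i=1}^{k}(-1)^{i-1}e_{k-i}\,p_i$, which under the hypothesis $m\le p-1$ forces $e_1=\cdots=e_{m-2}=0$ and hence $\prod_i(T-x_i)=T^{m-2}$. The paper instead collects the pairwise distinct nonzero coordinate values $y_1,\ldots,y_k$ together with their multiplicities $n_j$, observes $k\le m-2$, and reads the first $k$ equations as a homogeneous linear system whose determinant is $\prod_j n_j$ times a Vandermonde determinant in the $y_j$; since each $n_j<p$, a nontrivial solution forces two of the $y_j$ to coincide, a contradiction. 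Your approach is shorter and self-contained, and it isolates the precise role of the restriction $m\le p-1$ (invertibility of $1,\ldots,m-2$ in $\mathbb{K}$). What the paper's multiplicity-and-Vandermonde bookkeeping buys is reusability: the identical setup is invoked again in Lemmas \ref{lem11octB} and \ref{lem14oct}, where the conclusion is not that all coordinates vanish but that a solution cannot have too few distinct nonzero values --- a finer statement that the Newton-identity argument does not directly deliver. Both proofs are valid for the lemma as stated.
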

\begin{proof} For a solution ${\bf{x}}=(x_1,x_2,\ldots,x_{m-1},x_{m})$ of system (\ref{sy}),  let $y_1,\ldots,y_k$ be the pairwise distinct non-zero coordinates of ${\bf{x}}$. For $1\le j \le k$, the multiplicity of
$y_j$ is defined to be the number $n_j$ of the coordinates of ${\bf{x}}$ which are equal to $y_j$. Since $x_{m-1}=x_{m}=0$ we have $k\le m-2$.  The first $k$ equations of (\ref{sy}) read
 \begin{equation}
\label{sy11}\left\{
\begin{array}{llll}
n_1y_1+n_2y_2+\ldots + n_ky_k =0;\\
n_1y_1^2+n_2y_2^2+\ldots+n_ky_k^2=0;\\
\cdots\cdots\\
\cdots\cdots\\
n_1y_1^k+n_2y_2^k+ \ldots+n_ky_k^k=0.
\end{array}
\right.
\end{equation}
Then $(y_1,y_2,\ldots,y_k)$ is a nontrivial solution of the linear system
\begin{equation}
\label{sy11U}\left\{
\begin{array}{llll}
n_1X_1 + n_2X_2 + \ldots + n_kX_k=0;\\
n_1y_1X_1  + n_2y_2X_2+\ldots +n_ky_kX_k=0;\\
\cdots\cdots\\
\cdots\cdots\\
n_1y_1^{k-1}X_1 + n_2y_2^{k-1}X_2+ \ldots+n_ky_k^{k-1}X_k=0.
\end{array}
\right.
\end{equation}
whose determinant is equal to the product of $\prod_{i=1}^k n_i$ by the $k\times k$ Vandermonde determinant $\Delta=\prod_{1\le i <j \le k} (y_i-y_j)$. Since either $p=0$ or, if $p>0$ then $n_i<p$ holds, it turns
out that $\Delta=0$ and hence $y_i=y_j$ for some $i\ne j$. But this contradicts the definition of $y_1,\ldots,y_k$.
\end{proof}
\begin{lemma}
\label{lem11octB} No nontrivial solution $(x_1,x_2,\ldots,x_{m-1},x_{m})$ of system (\ref{sy}) has either
\begin{itemize}
\item[\rm(i)] four coordinates $x_{i_1},x_{i_2}, x_{i_3},x_{i_4}$ such that $x_{i_1}=x_{i_2}$ and $x_{i_3}=x_{i_4}$ with pairwise distinct indices $i_1,i_2,i_3,i_4$, or
\item[\rm(ii)] three coordinates $x_{i_1},x_{i_2}, x_{i_3}$ such that $x_{i_1}=x_{i_2}=x_{i_3}$ with pairwise distinct indices $i_1,i_2,i_3$, or
\item[\rm(iii)] three coordinates $x_{i_1},x_{i_2}, x_{i_3}$ such that $x_{i_1}=x_{i_2}$ and $x_{i_3}=0$ with two distinct indices $i_1,i_2$.
\end{itemize}
\end{lemma}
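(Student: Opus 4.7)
The plan is to adapt verbatim the Vandermonde-determinant argument used in the proof of Lemma \ref{lem11octA}. For a hypothetical nontrivial solution $(x_1,\ldots,x_m)$ of (\ref{sy}), I would again let $y_1,\ldots,y_k$ be the pairwise distinct nonzero coordinates with multiplicities $n_1,\ldots,n_k$, and observe that the first $k$ equations of (\ref{sy}) translate into a homogeneous linear system in the unknowns $y_j$ whose coefficient matrix has determinant $\bigl(\prod_{i=1}^k n_i\bigr)\cdot \Delta$, with $\Delta=\prod_{1\le i<j\le k}(y_i-y_j)$ a Vandermonde. The existence of a nontrivial solution then forces this determinant to vanish. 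Since $n_i\le m-1\le p-2<p$, all the factors $n_i$ are nonzero in $\mathbb{K}$, and by construction the $y_j$ are distinct, so $\Delta\neq 0$; contradiction.

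For this reduction to be legitimate it is necessary that $k\le m-2$ (so that the first $k$ equations of (\ref{sy}) really exist). The key step is therefore the combinatorial bookkeeping: I would check in each of the three cases that the coincidences imposed by the hypothesis force the number of distinct nonzero coordinates to drop by at least two. In case (i), each of the equalities $x_{i_1}=x_{i_2}$ and $x_{i_3}=x_{i_4}$ cuts down the count of distinct values by one, yielding at most $m-2$ distinct values and hence $k\le m-2$. In case (ii), the triple equality $x_{i_1}=x_{i_2}=x_{i_3}$ again reduces the count by two. In case (iii), the single equality $x_{i_1}=x_{i_2}$ reduces the count to at most $m-1$, and discarding the value $0$ contributed by $x_{i_3}$ leaves $k\le m-2$.

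Two minor edge cases must be mentioned: if $k=0$ the solution is the trivial one, and if $k=1$ the first equation in (\ref{sy}) reduces to $n_1y_1=0$, which with $1\le n_1<p$ forces $y_1=0$, again the trivial solution. Thus the interesting range $2\le k\le m-2$ is exactly where the Vandermonde argument applies, and in all three cases the contradiction is obtained. I do not anticipate any substantive obstacle: the whole content of the lemma is the simple observation that cases (i)--(iii) each drop the distinct-value count by at least $2$, so that Lemma \ref{lem11octA}'s argument goes through unchanged.
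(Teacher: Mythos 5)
Your proposal is correct and is essentially identical to the paper's proof: the authors likewise observe that each of the hypotheses (i)--(iii) forces the number $k$ of distinct nonzero coordinate values down to at most $m-2$, and then invoke the Vandermonde-determinant argument from Lemma \ref{lem11octA} verbatim. Your explicit case-by-case bookkeeping and the $k\le 1$ edge cases are slightly more detailed than the paper's one-line reduction, but the mathematical content is the same.
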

\begin{proof} We use the argument in the proof of Lemma \ref{lem11octA}. For a non-trivial solution ${\bf{x}}=(x_1,x_2,\ldots,x_{m-1},x_{m})$ of system (\ref{sy}), let $y_1,\ldots,y_k$ be the pairwise distinct non-zero
coordinates of ${\bf{x}}$ with multiplicities $n_j$ for $j=1,\ldots, k$. If ${\bf{x}}$ is a counterexample to Lemma \ref{lem11octB} then $k\le m-2$. But the proof of Lemma \ref{lem11octA} shows that this is actually
impossible.
\end{proof}
\begin{lemma}
\label{lem14oct} Up to a non-zero scalar, system (\ref{sy}) has finitely many solutions $(x_1,x_2,\ldots,x_{m-2},x_{m-1},0)$.
\end{lemma}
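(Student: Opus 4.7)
The plan is to substitute $x_m=0$ into system (\ref{sy}) and interpret the resulting $m-2$ equations as the vanishing of the power sums $p_k = \sum_{i=1}^{m-1} x_i^k$ for $k=1,\ldots,m-2$, leaving the $m-1$ unknowns $x_1,\ldots,x_{m-1}$.

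Next I would invoke Newton's identities, which express each elementary symmetric polynomial of $x_1,\ldots,x_{m-1}$ via the recurrence $k\,e_k = \sum_{i=1}^{k}(-1)^{i-1} e_{k-i}\,p_i$. The hypothesis $m \le p-1$ (trivially satisfied in characteristic zero) ensures that every integer $k \in \{1,\ldots,m-2\}$ is invertible in $\mathbb{K}$. An easy induction on $k$, using that $p_1 = \cdots = p_{m-2}=0$, then forces $e_1 = e_2 = \cdots = e_{m-2} = 0$. Hence $x_1,\ldots,x_{m-1}$ are the roots of
\[
T^{m-1} - e_1 T^{m-2} + \cdots + (-1)^{m-1} e_{m-1} \;=\; T^{m-1} + (-1)^{m-1} e_{m-1}.
\]
In particular $x_i^{m-1} = (-1)^{m} e_{m-1} =: c$ for every $i$.

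If $c=0$ then all $x_i$ vanish and the solution is trivial. Otherwise $c \neq 0$, and $x_1,\ldots,x_{m-1}$ must be a permutation of the $m-1$ pairwise distinct $(m-1)$-th roots of $c$. Since replacing the solution $(x_1,\ldots,x_{m-1},0)$ by $\lambda (x_1,\ldots,x_{m-1},0)$ rescales $c$ into $\lambda^{m-1} c$, every nontrivial solution is, up to a nonzero scalar, a permutation of $(\varepsilon,\varepsilon^2,\ldots,\varepsilon^{m-1})$ followed by the zero coordinate, where $\varepsilon$ is a primitive $(m-1)$-th root of unity (cf.\ Lemma \ref{lem6pct}). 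This produces at most $(m-1)!$ projective points, in particular finitely many.

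The argument is essentially a single application of Newton's identities; the only genuine subtlety is the invertibility of $1,2,\ldots,m-2$ in $\mathbb{K}$, which is precisely why the standing assumption $m\le p-1$ is imposed.
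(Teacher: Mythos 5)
Your argument is correct, and it takes a genuinely different route from the paper. The paper reuses the technique of Lemma \ref{lem11octA}: it groups the coordinates into the pairwise distinct non-zero values $y_1,\ldots,y_k$ with multiplicities $n_j$, normalizes one value to $1$, and derives a contradiction from the vanishing of a (scaled) Vandermonde determinant. You instead pass through Newton's identities: the vanishing of the power sums $p_1,\ldots,p_{m-2}$ forces $e_1=\cdots=e_{m-2}=0$ (legitimate, since $1,\ldots,m-2$ are invertible under the standing hypothesis $m\le p-1$), so the $x_i$ are the roots of $T^{m-1}-c$, and for $c\neq 0$ this polynomial is separable because $m-1\le p-2$. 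Your approach buys more than the paper's: it not only proves finiteness but explicitly classifies the candidate solutions as permutations of $(\varepsilon,\varepsilon^2,\ldots,\varepsilon^{m-1},0)$ up to scalar, i.e.\ exactly the points of the orbit $\mathcal{O}_\varepsilon$ that the paper only identifies later (Lemma \ref{lem8octU}) via a degree/counting argument; combined with Lemma \ref{lem6pct} it even pins down the hyperplane section $V\cap\{X_m=0\}$ completely. It also makes transparent exactly where the hypothesis $m\le p-1$ enters. The paper's version, by contrast, is more uniform with the neighbouring lemmas, reusing a single Vandermonde mechanism throughout Section 3. Both proofs are valid; yours is the sharper one for this particular statement.
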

\begin{proof} We again use the idea from the proof of Lemma \ref{lem11octA}. For a non-trivial solution ${\bf{x}}=(x_1,x_2,\ldots,x_{m-1},0)$ of system (\ref{sy}), let $y_1,\ldots,y_k$ be the pairwise distinct non-zero
coordinates of ${\bf{x}}$ with multiplicities $n_j$ for $j=1,\ldots, k$. Furthermore, one of them can be assumed to be equal to $1$.  Up to a relabelling of indices, $n_k$ counts the coordinates equal to $1$. Then $(y_1,\ldots,y_{k-1})$ is a solution of
\begin{equation}
\label{sy111}\left\{
\begin{array}{llll}
n_1X_1 + n_2X_2 + \ldots + n_{k-1}X_{k-1}=-n_k;\\
n_1y_1X_1  + n_2y_2X_2+\ldots +n_{k-1}y_{k-1}X_{k-1}=-n_k;\\
\cdots\cdots\\
\cdots\cdots\\
n_1y_1^{k-2}X_1 + n_2y_2^{k-2}X_2+ \ldots+n_{k-1}y_{k-1}^{k-2}X_{k-1}=-n_k.
\end{array}
\right.
\end{equation}
Assume on the contrary that system (\ref{sy111}) has infinitely many solutions. Since $n_k\neq 0$, the determinant of (\ref{sy111}) vanishes. On the other hand, up to a non-zero constant, it is a Vandermonde determinant
with parameters $(y_1,\ldots,y_{k-1})$. Therefore,  $y_i=y_j$ must occur for some $i\ne j$ contradicting the definition of $y_1,\ldots,y_k$.
\end{proof}
\section{The variety defined by system \eqref{sy}}
\label{secvar}
We keep upon with the notation introduced in Section \ref{back}. In particular, $V$ stands for the projective algebraic variety of ${\rm{PG}}(m-1,\mathbb{K})$ defined by the equations of system (\ref{sy}).

Our goal is to prove that $V$ is an irreducible non-singular curve. This requires some technical lemmas involving both the geometry of $V$ and the transpositions of $G$. We begin by stating and proving them.
\begin{lemma}
\label{lem1D} $V$ has dimension $1$.
\end{lemma}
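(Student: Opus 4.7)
The plan is to sandwich the dimension between $1$ and $1$ by combining a standard intersection-dimension theorem with Lemma \ref{lem14oct}.

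For the lower bound, I will appeal to the projective form of Krull's Hauptidealsatz: every nonempty complete intersection of $r$ hypersurfaces in $\mathrm{PG}(n,\mathbb{K})$ has all of its irreducible components of dimension at least $n-r$. In our situation $V$ is cut out by $m-2$ hypersurfaces inside $\mathrm{PG}(m-1,\mathbb{K})$, and $V$ is nonempty thanks to the explicit solution exhibited in Lemma \ref{lem6pct}. Thus every irreducible component of $V$ has dimension at least $(m-1)-(m-2)=1$.

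For the upper bound, I will use Lemma \ref{lem14oct} together with the fact that intersecting with a hyperplane drops dimension by at most $1$. Let $\pi_m$ denote the hyperplane $X_m=0$ of $\mathrm{PG}(m-1,\mathbb{K})$. Lemma \ref{lem14oct} asserts that, up to a nonzero scalar, system \eqref{sy} admits only finitely many solutions of the form $(x_1,\ldots,x_{m-1},0)$; in geometric language, the scheme-theoretic intersection $V\cap \pi_m$ is $0$-dimensional. Suppose, for a contradiction, that some irreducible component $W$ of $V$ has dimension $d\ge 2$. Two cases arise: if $W\not\subset \pi_m$, then by the projective intersection theorem $W\cap \pi_m$ is nonempty and every component of it has dimension $\ge d-1\ge 1$; if instead $W\subset \pi_m$, then $W$ itself is a component of $V\cap \pi_m$ of dimension $\ge 2$. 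Either way, $V\cap \pi_m$ would be infinite, contradicting Lemma \ref{lem14oct}. Therefore every component of $V$ has dimension at most $1$.

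Combining the two bounds, every irreducible component of $V$ has dimension exactly $1$, and hence $V$ itself has dimension $1$. The only nonroutine ingredient is the bound on $V\cap\pi_m$, but this has already been secured in Lemma \ref{lem14oct}; the remaining arguments are standard facts from dimension theory.
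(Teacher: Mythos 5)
Your proof is correct. The lower bound is the same as the paper's: both invoke the standard dimension theorem for intersections of $m-2$ hypersurfaces in $\mathrm{PG}(m-1,\mathbb{K})$ (the paper cites \cite[Corollary 5]{sha}; your explicit appeal to Lemma \ref{lem6pct} for nonemptiness is a harmless extra precaution). For the upper bound you take a genuinely different route. The paper exhibits a \emph{codimension-two} linear subspace disjoint from $V$, namely $X_m=X_{m-1}=0$, using Lemma \ref{lem11octA}; the projective dimension theorem then gives $\dim V\le 1$ in one stroke, with no case analysis. You instead intersect with the single hyperplane $X_m=0$ and use Lemma \ref{lem14oct} to know that this intersection is a finite set, then rule out a component $W$ of dimension $\ge 2$ by splitting into the cases $W\not\subset\pi_m$ and $W\subset\pi_m$. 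Both arguments are sound and both rest on the same underlying Vandermonde computation from Section 3 (Lemmas \ref{lem11octA} and \ref{lem14oct} are proved by the same device); the paper's choice of the smaller, entirely disjoint subspace simply buys a shorter deduction, while yours trades that for the finiteness statement plus a short dichotomy. One cosmetic remark: what you call ``Krull's Hauptidealsatz for complete intersections'' is really the statement that each irreducible component of an intersection of $r$ hypersurfaces has codimension at most $r$; calling the intersection ``complete'' before you know its dimension is circular terminology, though the argument itself is fine.
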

\begin{proof}
From \cite[Corollary 5]{sha} applied to $r=m-2$ and $n=m-1$, we have $\dim(V)\ge 1$. On the other hand, we show that there exists a projective subspace of codimension $2$ which is disjoint from $V$. By  \cite[Corollary
4]{sha}, this will yield $\dim(V)\le m-1-(m-1-2)-1\le 1$. Lemma \ref{lem11octA} ensures that a good choice for such a subspace of codimension 2 is the intersection $\Lambda$ of the hyperplanes of equation $X_{m}=0$ and
$X_{m-1}=0$.
\end{proof}
\begin{lemma}
\label{lem1} $V$ is non-singular.
\end{lemma}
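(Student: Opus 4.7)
The plan is to apply the Jacobian criterion. By Lemma \ref{lem1D}, $V$ has dimension $1$, and it is cut out in $\textrm{PG}(m-1,\mathbb{K})$ by the $m-2$ homogeneous equations of (\ref{sy}); hence $V$ is non-singular at $P=(x_1:\cdots:x_m)$ if and only if the $(m-2)\times m$ Jacobian matrix
\[
J(P)=\bigl(k\,x_j^{k-1}\bigr)_{\substack{1\le k\le m-2\\ 1\le j\le m}}
\]
has maximal rank $m-2$. Under our hypothesis $m\le p-1$ (or $\mathbb{K}$ of characteristic zero), the integers $1,2,\ldots,m-2$ are units in $\mathbb{K}$, so after factoring these out of the rows, the rank of $J(P)$ equals the rank of the Vandermonde-type matrix $M(P)$ whose $(k,j)$-entry is $x_j^{k-1}$.

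The standard linear-algebra fact is that $\textrm{rank}\,M(P)=\min(m-2,\,s)$, where $s$ is the number of distinct values in the list $x_1,\ldots,x_m$: any $s$ columns built on pairwise distinct $x_j$'s, restricted to any $s$ consecutive rows, form an invertible square Vandermonde block, while equal $x_j$'s produce identical columns. Consequently it suffices to show that at every $P\in V$ at least $m-2$ of the coordinates $x_1,\ldots,x_m$ take pairwise distinct values.

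This is precisely what the lemmas of the previous section provide. By Lemma \ref{lem11octA}, no two coordinates of $P$ can vanish simultaneously. By Lemma \ref{lem11octB}, three coordinates cannot all coincide, two disjoint pairs of coordinates cannot both coincide, and a repeated coordinate cannot coexist with a zero coordinate. These three constraints leave only two admissible shapes for the multiset $\{x_1,\ldots,x_m\}$: either all $m$ entries are pairwise distinct (with at most one of them equal to $0$), or exactly two nonzero entries coincide and the remaining $m-2$ entries are pairwise distinct, nonzero, and different from the repeated value. In either case, the number of distinct coordinates is at least $m-1$, which exceeds $m-2$, so $M(P)$ has rank $m-2$ and $P$ is a smooth point of $V$.

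The main obstacle in this argument is purely bookkeeping: one has to recognise the Jacobian as a row-scaled Vandermonde matrix, translate its rank into a combinatorial count on the multiplicities of $x_1,\ldots,x_m$, and then verify that the forbidden multiplicity patterns are precisely those ruled out by Lemmas \ref{lem11octA}--\ref{lem11octB}. Since the bound $m-1$ delivered by these lemmas is strictly larger than the required $m-2$, the Vandermonde rank computation can be carried out with room to spare, and no further case analysis is needed.
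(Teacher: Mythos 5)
Your proof is correct and follows essentially the same route as the paper: both apply the Jacobian criterion, observe that the maximal minors of the Jacobian are (up to unit row factors) Vandermonde determinants, and then invoke Lemmas \ref{lem11octA} and \ref{lem11octB} to rule out the coordinate-multiplicity patterns that would make all such minors vanish. Your version is slightly more explicit in counting the distinct coordinate values (obtaining at least $m-1$, where $m-2$ would already suffice), but this is only a matter of presentation.
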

\begin{proof}  The Jacobian matrix of $V$ is
$$\nabla(V)=\frac{\partial(f_1,\ldots,f_{m-2})}{\partial(X_1,\ldots,X_{m})}=
\left(
  \begin{array}{llll}\vspace{0.1cm}
    \frac{\partial f_1}{\partial X_1} & \cdots & \frac{\partial f_1}{\partial X_{m}} \\
    \frac{\partial f_2}{\partial X_1} & \cdots & \frac{\partial f_2}{\partial X_{m}} \\
    \cdots & \cdots & \cdots  \\
     \frac{\partial f_{m-2}}{\partial X_1} & \cdots & \frac{\partial f_{m-2}}{\partial X_{m}} \\
  \end{array}
\right)=
\left(
  \begin{array}{llll}\vspace{0.1cm}
    1 & \cdots & 1  \\
    2X_1 & \cdots & 2X_{m} \\
    \cdots & \cdots & \cdots  \\
     (m-2)X_1^{m-3} & \cdots & (m-2)X_{m}^{m-3} \\
  \end{array}
\right).
$$
Up to the non-zero factor $(m-2)!$, the determinants of maximum order $m-2$ are Vandermonde determinants. 
Therefore, for some point $P=(x_1:\ldots:x_{m})\in V$, $\nabla(V)$ evaluated at $P$ has rank less than $m-2$ if and only if $(x_1,\ldots,x_{m})$ has one of the properties (i) and (ii).
But Lemma \ref{lem11octB} rules out these possibilities, and hence the point $P$ is non-singular.
\end{proof}
\begin{lemma}
\label{lem8oct} $\deg(V)=(m-2)!$.
\end{lemma}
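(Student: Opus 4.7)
The plan is to combine an upper bound from B\'ezout's theorem with a lower bound obtained by explicitly counting the points of $V$ on a carefully chosen hyperplane section.

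For the upper bound, I would intersect $V$ with the hyperplane $\Pi\colon X_m=0$ of $\PG(m-1,\mathbb{K})$. Lemma \ref{lem1210} furnishes the point $(\omega:\omega^2:\cdots:\omega^{m}=1)\in V$ with nonzero last coordinate, so $V\not\subset\Pi$. Since $\dim V=1$ by Lemma \ref{lem1D}, the set $V\cap \Pi$ is finite, being the common zero locus of the $m-1$ hypersurfaces $f_1,\ldots,f_{m-2},\Pi$ of degrees $1,2,\ldots,m-2,1$. The generalized B\'ezout theorem then yields $|V\cap \Pi|\leq (m-2)!$, counted with multiplicity.

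For the lower bound, I would describe $V\cap \Pi$ set-theoretically via symmetric functions. A point $(x_1:\cdots:x_{m-1}:0)$ on $V\cap \Pi$ has coordinates satisfying $p_k=\sum_{i=1}^{m-1}x_i^k=0$ for $k=1,\ldots,m-2$. Since $m\leq p-1$, Newton's identities apply and force $e_1=\cdots=e_{m-2}=0$, where $e_k$ denotes the $k$th elementary symmetric polynomial in $x_1,\ldots,x_{m-1}$; hence these $x_i$ are the roots of $t^{m-1}+(-1)^{m-1}e_{m-1}$. Lemma \ref{lem11octA} rules out $e_{m-1}=0$, so the $x_i$ are the $(m-1)$-th roots of the nonzero constant $(-1)^{m} e_{m-1}$. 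After projective rescaling we may take them to be the $(m-1)$-th roots of unity in some order. Two such orderings represent the same projective point iff they differ by a scalar $\lambda\in\mathbb{K}^*$ that must itself be an $(m-1)$-th root of unity, and this cyclic action on the $(m-1)!$ orderings is free, yielding exactly $(m-1)!/(m-1)=(m-2)!$ distinct projective points in $V\cap \Pi$.

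Combining the two bounds forces each of these $(m-2)!$ points to have intersection multiplicity one and hence $\deg(V)=(m-2)!$. The only mildly delicate step is the passage from the vanishing of the first $m-2$ power sums to the vanishing of the corresponding elementary symmetric polynomials, which relies on the invertibility of $1,2,\ldots,m-2$ modulo $p$, i.e.\ precisely on the standing hypothesis $m\leq p-1$; once this is in place, the rest of the argument is purely combinatorial.
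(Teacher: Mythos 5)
Your proof is correct, and while it shares the paper's overall skeleton (a set-theoretic count of a hyperplane section for the lower bound, the generalized B\'ezout theorem for the upper bound), the key counting step is carried out along a genuinely different route. The paper sections $V$ with the hyperplane $X_{m-1}=X_m$: it takes one point guaranteed to exist there because $\dim V=1$, uses Lemma \ref{lem11octA} and Lemma \ref{lem11octB} to see that its remaining $m-2$ coordinates are nonzero and pairwise distinct, and then lets $\mathrm{Sym}_{m-2}$ permute them to produce $(m-2)!$ points. You instead section with $X_m=0$ and determine the fibre completely: Newton's identities (valid since $1,\dots,m-2$ are units, i.e.\ exactly the standing hypothesis $m\le p-1$) convert the vanishing of the first $m-2$ power sums into the vanishing of $e_1,\dots,e_{m-2}$, so the coordinates are the $(m-1)$-th roots of a nonzero constant, and the free action of $\mu_{m-1}$ by rescaling on the $(m-1)!$ orderings gives exactly $(m-2)!$ projective points. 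Your argument buys more than the paper's at this stage: it avoids Lemma \ref{lem11octB} entirely and simultaneously establishes the content of the later Lemma \ref{lem8octU}, namely that $V\cap\{X_m=0\}$ is precisely the set of orderings of $\mu_{m-1}$, i.e.\ the orbit of $P_\varepsilon$. The paper's version is shorter given the lemmas already in hand, but yours is more self-contained and more explicit. (Minor remark: citing Lemma \ref{lem11octA} to exclude $e_{m-1}=0$ is harmless overkill — if $e_{m-1}=0$ all coordinates vanish and there is no projective point at all.)
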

\begin{proof}
Let $\Pi$ be the hyperplane with equation $X_{m-1}=X_{m}$. Since $V$ has dimension $1$, $\Pi$ intersects $V$ in at least one point $P=(x_1:x_2:\ldots:x_{m-2}:x_{m-1}:x_{m-1})$. By Lemma \ref{lem11octA}, $x_{m-1}\neq 0$ and so $P=(x_1/x_{m-1}:\ldots:x_{m-2}/x_{m-1}:1:1$). Also, from Lemma \ref{lem11octB} $x_1/x_{m-1},\ldots,x_{m-2}/x_{m-1}$ are pairwise distinct. Since any permutation of $x_1/x_{m-1},\ldots,x_{m-2}/x_{m-1}$ gives rise to a new point of $V$ on $P$, we obtain $|\Pi\cap V|=(m-2)!$ and hence $\deg(V)\ge (m-2)!$. On other hand, the higher dimensional generalization of B\'ezout's theorem yields $\deg(V)\le (m-2)!$ whence the claim follows.
\end{proof}

\begin{lemma}
\label{lem10giugno} Let $P=(\xi_1:\xi_2:\cdots:\xi_{m})$ be a point of $V$, and fix $r\ge 2$ indices  $1\le j_1<\cdots <j_r\le m$. Let $R=(x_1:x_2:\cdots:x_{m})$ be a point of $V$ such that $x_{j_1}=\xi_{j_1},\ldots, x_{j_r}=\xi_{j_r}$. Then there is a permutation $\rho$ on $\{1,\ldots,m\}$ with $\rho(j_i)=j_i$ for $i=1,\ldots,r$  such that $x_k=\xi_{\rho(k)}$ for $k=1,\ldots,m$.
\end{lemma}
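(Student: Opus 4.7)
The plan is to reduce the claim to the elementary fact that in characteristic zero or $p>m$, the first $n$ power sums of $n$ variables determine the elementary symmetric polynomials in those variables, and hence determine the multiset of values. Let me describe the steps.

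First I would pass to affine representatives: since the equations in (\ref{sy}) are homogeneous, the condition $x_{j_i}=\xi_{j_i}$ for $i=1,\ldots,r$ together with $r\ge 2$ means we work with specific $m$-tuples $(\xi_1,\ldots,\xi_m)$ and $(x_1,\ldots,x_m)$ satisfying the identities $\sum_{i=1}^m \xi_i^k=0$ and $\sum_{i=1}^m x_i^k=0$ for $k=1,\ldots,m-2$. Set $I=\{1,\ldots,m\}\setminus\{j_1,\ldots,j_r\}$, so $|I|=m-r$. Subtracting the two sets of identities and using $x_{j_i}=\xi_{j_i}$ yields
\begin{equation*}
\sum_{i\in I} x_i^k \;=\; \sum_{i\in I} \xi_i^k, \qquad k=1,2,\ldots,m-2.
\end{equation*}
Since $r\ge 2$, the number $n=m-r$ of variables on each side is at most $m-2$, so in particular the first $n$ power sums of the multiset $\{x_i\}_{i\in I}$ agree with the first $n$ power sums of $\{\xi_i\}_{i\in I}$.

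Next I would invoke Newton's identities. For $n$ indeterminates, these give a triangular linear system expressing the elementary symmetric polynomials $e_1,\ldots,e_n$ in terms of the power sums $p_1,\ldots,p_n$, with diagonal coefficients $1,2,\ldots,n$. Because we have assumed $m\le p-1$ in positive characteristic, every integer $1\le k\le n\le m-2$ is invertible in $\mathbb{K}$, so the system can be solved uniquely for $e_1,\ldots,e_n$. Consequently the two multisets $\{x_i\}_{i\in I}$ and $\{\xi_i\}_{i\in I}$ have the same elementary symmetric polynomials, and therefore are roots of the same monic polynomial of degree $n$ in $\mathbb{K}[T]$; hence they coincide as multisets.

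Finally, a multiset equality on $I$ produces a bijection $\rho_0\colon I\to I$ with $x_k=\xi_{\rho_0(k)}$ for every $k\in I$. Extending $\rho_0$ by the identity on $\{j_1,\ldots,j_r\}$ yields a permutation $\rho$ of $\{1,\ldots,m\}$ with $\rho(j_i)=j_i$ for $i=1,\ldots,r$ and $x_k=\xi_{\rho(k)}$ for all $k$, as required. The only real subtlety is the invertibility condition behind Newton's identities, and that is precisely what the standing assumption $m\le p-1$ secures; no further combinatorial input is needed.
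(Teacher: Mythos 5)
Your argument is correct, and it takes a genuinely different route from the paper's. The paper proves this lemma geometrically: it splits into cases according to whether $P$ has a repeated coordinate among the fixed ones, chooses a suitable hyperplane $\Pi$ through $P$ and $R$, observes via Lemma \ref{lem11octB} that permuting the remaining coordinates of $P$ already produces $(m-2)!$ distinct points of $V\cap\Pi$, and then invokes the degree computation $\deg(V)=(m-2)!$ (Lemma \ref{lem8oct}, i.e.\ B\'ezout) to conclude that $R$ must be one of those permuted points. You instead work directly with the defining power-sum identities: subtracting the relations for the two representative tuples kills the $r$ shared coordinates and shows that the first $m-2\ge m-r$ power sums of the two complementary multisets agree, whence Newton's identities (valid because $1,\dots,m-r$ are invertible under the standing hypothesis $m\le p-1$) force the elementary symmetric functions, hence the monic polynomials, hence the multisets, to coincide. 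Your use of $r\ge 2$ is exactly where it is needed (to get $m-r\le m-2$ power sums), and the extension of the bijection on $I$ by the identity on $\{j_1,\dots,j_r\}$ correctly produces the required $\rho$. The trade-off: your proof is more elementary and self-contained, needing neither the degree of $V$ nor the non-repetition constraints of Lemma \ref{lem11octB}, and it yields the slightly stronger statement that the multiset of unfixed coordinates is completely determined by the fixed ones; the paper's proof, by contrast, reuses machinery already established for the rest of the argument and stays within the geometric framework of hyperplane sections that recurs throughout Section \ref{secvar}. Both are valid proofs of the lemma.
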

\begin{proof} If there is a pair $\{i,\ell\}$ with $i\ne \ell$ such that $\xi_{j_i}=\xi_{j_\ell}$, let $\Pi$ be the hyperplane of equation $X_{j_1}=X_{j_\ell}$. Lemma \ref{lem11octB} shows that the $m-2$ coordinates of $P$ other than $\xi_{j_i}$ and $\xi_{j_\ell}$ are pairwise distinct. Therefore the permutations on the coordinates other than $X_{j_i}$ and $X_{j_\ell}$ give rise as many as $(m-2)!$ pairwise distinct points of $V$. By Lemma \ref{lem8oct} these are all points in $V\cap \Pi$. From this the claim follows. If no such pair $\{i,\ell\}$ exists, take for $\Pi$ the hyperplane of equation $\Pi: X_{j_1}=\xi_{j_1}X_{m}$. Then the above argument still works, and the claim holds true.
\end{proof}

\begin{lemma}
\label{lem10octAchar0} For a $m$-th primitive root of unity $\omega$, let $P_\omega=(\omega:\omega^2:\cdots:\omega^{m}=1)$ be a point of $V$. Then the stabilizer of $P_\omega$ in $G$ is a cyclic group of order
$m$, and it acts on $\{X_1,\ldots,X_{m}\}$ as a $m$-cycle. Moreover if $\mathcal{O}_\omega$ is the $G$-orbit of $P_\omega$, then $|\mathcal{O}_\omega|=(m-1)!$, and if $\Pi_\omega$ is the hyperplane $X_{m-1}=\omega
X_{m}$, then $\Pi_\omega\cap \mathcal{O}_\omega=V\cap \Pi_\omega$.
\end{lemma}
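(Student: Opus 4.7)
The plan has three steps corresponding to the three parts of the lemma: compute the stabilizer, apply the orbit--stabilizer theorem, then use Lemma \ref{lem10giugno} to identify $V\cap\Pi_\omega$.

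First I would compute $G_{P_\omega}$ directly. Using the paper's convention $g(\sum \lambda_i X_i)=\sum\lambda_i g(X_i)$, if $\rho\in G$ satisfies $\rho(X_i)=X_{\rho(i)}$, reindexing gives $\rho(P_\omega)=\sum_j \omega^{\rho^{-1}(j)}X_j$. Demanding $\rho(P_\omega)=\lambda P_\omega$ forces the exponent difference $\rho^{-1}(j)-j$ to be independent of $j$ modulo $m$, so $\rho^{-1}$ is a shift $j\mapsto j+c\pmod m$ for some $c\in\{0,\ldots,m-1\}$. Each such shift is a single $m$-cycle, giving exactly $m$ stabilizing permutations which together form the cyclic group generated by $(X_m X_{m-1}\cdots X_1)$. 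This establishes (i) and (ii), and then $|\mathcal{O}_\omega|=|G|/|G_{P_\omega}|=m!/m=(m-1)!$.

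For the final equality $V\cap\Pi_\omega=\mathcal{O}_\omega\cap\Pi_\omega$, I would first exhibit a point of $\mathcal{O}_\omega$ lying on $\Pi_\omega$ by applying the transposition $(X_1,X_{m-1})$ to $P_\omega$, obtaining $P'=(\omega^{m-1}:\omega^2:\omega^3:\cdots:\omega^{m-2}:\omega:1)$, whose last two coordinates satisfy $\omega=\omega\cdot 1$. Then I would apply Lemma \ref{lem10giugno} to $P'$ with the two indices $j_1=m-1$ and $j_2=m$: every $R=(x_1:\cdots:x_m)\in V$ with $x_{m-1}=\omega$ and $x_m=1$ is obtained from $P'$ by a permutation of its remaining coordinates, hence lies in $\mathcal{O}_\omega$. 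To treat an arbitrary $R\in V\cap\Pi_\omega$ I would invoke Lemma \ref{lem11octA} to conclude $x_m\neq 0$ and rescale so that $x_m=1$ and $x_{m-1}=\omega$. Combined with the trivial reverse inclusion $\mathcal{O}_\omega\cap\Pi_\omega\subseteq V\cap\Pi_\omega$ coming from $\mathcal{O}_\omega\subseteq V$, this yields the claim.

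The only delicate point I anticipate is the bookkeeping needed to check that the $m-2$ values $\omega^{m-1},\omega^2,\omega^3,\ldots,\omega^{m-2}$ appearing in $P'$ are pairwise distinct, which is what makes the $(m-2)!$ permutations of $P'$ produce $(m-2)!$ distinct points of $V$ and so also gives the quantitative fact $|V\cap\Pi_\omega|=(m-2)!$. Distinctness holds because $\omega$ has order exactly $m$ and the exponents $m-1,2,3,\ldots,m-2$ are pairwise distinct modulo $m$ for every $m\geq 5$.
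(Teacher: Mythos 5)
Your proposal is correct and follows essentially the same route as the paper: identify the stabilizer as the group of cyclic shifts, apply orbit--stabilizer, and then deduce $V\cap\Pi_\omega=\mathcal{O}_\omega\cap\Pi_\omega$ from the degree bound $\deg(V)=(m-2)!$ --- your only real variation is that you delegate this last counting step to Lemma \ref{lem10giugno} (which precedes the statement, so there is no circularity) instead of redoing it directly as the paper does. One small inaccuracy in your write-up: a shift $j\mapsto j+c\pmod m$ is a single $m$-cycle only when $\gcd(c,m)=1$ (e.g.\ for $m=6$, $c=3$ gives three transpositions), but this does not affect the conclusion, since the $m$ shifts still form the cyclic group generated by the $m$-cycle $c=1$, which is all the lemma asserts.
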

\begin{proof}
Let $u\in {\rm{PGL}}(m-1,\mathbb{K})$ be given by the $(0,1)$-matrix $\{g_{i,j}\}$ whose $1$ entries are $g_{i,i+1}$ for $i=1,2,\ldots, p-2$,  and $g_{m,1}$. Clearly, $u\in G$ and $P_\omega$ is fixed
by $u$. Also $u$ has order $m$ and acts on $\{X_1,\ldots,X_{m}\}$ as a $m$-cycle. Therefore, $|G_P|\geq m$. We show that equality holds. Let $\sigma$ be
permutation on $\{ 1, 2,\dots,m-1\}$. If $P_\omega=P_\omega^\sigma$, then $\omega^i=\omega^{\sigma(i)}$ for any $i\leq m-1$. Thus $\sigma(i)-i\equiv 0\pmod{m}$ whence $\sigma(i)=i$ follows. Therefore $|G_P|= m$. From
this
$|\mathcal{O}_\omega|=|G|/|G_P|=(m-1)!$ follows. Since $\mathcal{O}_\omega\subset \Pi_\omega$ and $|V\cap \Pi_\omega|\le \deg(V)$ by Lemma \ref{lem1},  the last claim follows from Lemma \ref{lem8oct}.
\end{proof}
\begin{lemma}
\label{lem7oct} For a $(m-1)$-th primitive root of unity $\varepsilon$, let $P_\varepsilon=(\varepsilon:\varepsilon^2:\ldots,\varepsilon^{m-1}=1:0)$. Then the stabilizer of $P_{\varepsilon}$ in $G$ is a cyclic
group of order $m-1$ which acts on $\{X_1,\ldots,X_{m-1}\}$ as a cycle.
\end{lemma}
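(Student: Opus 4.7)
The plan is to mimic the proof of Lemma \ref{lem10octAchar0}, with the extra observation that the zero coordinate of $P_\varepsilon$ is in a distinguished position and therefore must be preserved by every stabilizer element. This gives an immediate reduction to a symmetric group on $m-1$ letters, after which the argument of Lemma \ref{lem10octAchar0} (applied with $\varepsilon$ in place of $\omega$ and $m-1$ in place of $m$) goes through essentially verbatim.

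More concretely, the first step is to exhibit a generator. Consider the element $u\in G$ corresponding to the permutation matrix $(g_{l,n})$ with $g_{l,l+1}=1$ for $l=1,\ldots,m-2$, $g_{m-1,1}=1$, and $g_{m,m}=1$, and all other entries zero. Then $u$ has order $m-1$ and acts as an $(m-1)$-cycle on $\{X_1,\ldots,X_{m-1}\}$ while fixing $X_m$. Applying $u$ to $P_\varepsilon$ shifts the first $m-1$ coordinates cyclically, producing the tuple $(\varepsilon^2:\varepsilon^3:\cdots:\varepsilon^{m-1}:\varepsilon:0)$, which equals $\varepsilon\cdot(\varepsilon:\varepsilon^2:\cdots:\varepsilon^{m-2}:1:0)=\varepsilon\cdot P_\varepsilon$ because $\varepsilon^{m-1}=1$. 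Hence $u$ stabilizes $P_\varepsilon$ and $\langle u\rangle$ is a cyclic subgroup of $G_{P_\varepsilon}$ of order $m-1$ acting as claimed.

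For the upper bound, let $\sigma\in G_{P_\varepsilon}$, so that $\sigma(P_\varepsilon)=\lambda P_\varepsilon$ for some $\lambda\in\mathbb{K}^*$. Since $\xi_m=0$ is the unique zero coordinate of $P_\varepsilon$, comparing the $m$-th entries forces $\sigma$ to fix the symbol $X_m$; thus $\sigma$ restricts to a permutation $\tau$ on $\{1,\ldots,m-1\}$. Writing $\lambda=\varepsilon^c$ and equating coordinates gives $\varepsilon^{\tau^{-1}(j)}=\varepsilon^{j+c}$ for every $j\in\{1,\ldots,m-1\}$, so that $\tau^{-1}(j)\equiv j+c\pmod{m-1}$. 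This determines $\tau$ uniquely from $c$, leaving at most $m-1$ choices and yielding $|G_{P_\varepsilon}|\le m-1$. Combined with the first step, $|G_{P_\varepsilon}|=m-1$ and $G_{P_\varepsilon}=\langle u\rangle$.

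There is no real obstacle here: the only slightly delicate point is to remember that the coordinate $0$ of $P_\varepsilon$ in position $m$ is distinguished (forcing $\sigma(m)=m$), a feature absent in Lemma \ref{lem10octAchar0} where all coordinates of $P_\omega$ were nonzero and the cyclic action there involved the full index set $\{1,\ldots,m\}$.
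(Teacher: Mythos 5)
Your proposal is correct and follows essentially the same route as the paper: exhibit the $(m-1)$-cycle $u$ (the paper calls it $h$) fixing $X_m$, check it multiplies $P_\varepsilon$ by $\varepsilon$, and then show any stabilizing permutation is forced to be a cyclic shift of $\{X_1,\ldots,X_{m-1}\}$ determined by the proportionality scalar, giving at most $m-1$ elements. Your explicit remark that the unique zero coordinate forces $\sigma(X_m)=X_m$ is a point the paper's proof leaves implicit, but the substance of the argument is identical.
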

\begin{proof} Let $h\in {\rm{PGL}}(m-1,\mathbb{K})$ be given by the $0,1$-matrix $\{g_{i,j}\}$ whose $1$ entries are $g_{i,i+1}$ for $i=1,2,\ldots, m-2$, $g_{m-1,1}$ and $g_{m,m}$. Clearly $h$ takes $P_{\varepsilon}$ to the
point $Q=(\varepsilon^2:\varepsilon^3:\ldots:1:\varepsilon:0)$. Actually, $Q=P_{\varepsilon}$ as the coordinates of $Q$ are proportional to those of $P_{\varepsilon}$. It remains to show that any $g\in G$ fixing
$P_{\varepsilon}$ is a power of $h$.
Let $g:\,(X_1:X_2:\ldots:X_{m-1}:0)\rightarrow (Y_1:Y_2:\ldots:Y_{m-1}:0)$ where $Y_1Y_2\cdots Y_{m-1}$ is a permutation $\pi$ of $X_1X_2\cdots X_{m-1}$. Since $g$ fixes $P_{\varepsilon}$,
there exists $(y_1:y_2:\ldots:y_{m-1})$ with $\pi(\varepsilon^i)=y_i$ for $i=1,\ldots,m-1$ such that $(\varepsilon,\varepsilon^2,\ldots,\varepsilon^{m-1}=1)$ and  $(y_1,y_2,\ldots,y_{m-1})$ are proportional. Then
$y_{m-1}\varepsilon^i=y_i$ for $i=1,\ldots, m-1$. Also, there exists $1\le j \le m-1$ such that $y_j=1$. Therefore $y_{m-1}=\varepsilon^{-j}$ whence $y_i=\varepsilon^{i-j}.$ Thus $\pi(\varepsilon^i)=\varepsilon^{i-j}$
whence $Y_i=X_{i+(m-1-j)}$ where the indices are taken modulo $m-1$. Hence $g=h^{m-1-j}$ which proves the first claim. Since $h$ fixes $X_{m}$, $G$ acts on $\{X_1,\ldots,X_{m-1}\}$ as a cycle.
\end{proof}

\begin{lemma}
\label{lem8octU} Let $\Pi_\infty$ be the hyperplane of equation $X_{m}=0$. Then $\Pi_\infty$ intersects $V$ transversally at each of their $(m-2)!$ common points. Moreover, $\mathcal{O}_\varepsilon=V\cap\Pi_\infty$ is
 the orbit of $P_\epsilon$ under the action of the stabilizer of $\Pi_\infty$ in $G$.
\end{lemma}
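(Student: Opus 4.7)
The plan is to combine the previously established degree formula with an orbit-counting argument for the subgroup of $G$ that preserves the hyperplane at infinity.

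First I would identify the stabilizer $H\le G$ of $\Pi_\infty$. Since $G={\rm Sym}_m$ acts by permuting the coordinates, $H$ consists of those permutations that fix $X_m$; thus $H\cong {\rm Sym}_{m-1}$ and $|H|=(m-1)!$. Clearly $H$ preserves $V$ and $\Pi_\infty$, hence it acts on $V\cap\Pi_\infty$. By Lemma \ref{lem6pct} the point $P_\varepsilon=(\varepsilon:\varepsilon^2:\cdots:\varepsilon^{m-1}=1:0)$ lies in $V\cap\Pi_\infty$. By Lemma \ref{lem7oct}, the stabilizer of $P_\varepsilon$ in $G$ is cyclic of order $m-1$ and acts as a cycle on $\{X_1,\ldots,X_{m-1}\}$; in particular, it fixes $X_m$ and therefore is contained in $H$. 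By the orbit–stabilizer theorem, the $H$-orbit of $P_\varepsilon$ has size
\[
|H|/|{\rm Stab}_G(P_\varepsilon)| \;=\; (m-1)!/(m-1) \;=\; (m-2)!.
\]
All these points belong to $V\cap\Pi_\infty$, so $|V\cap\Pi_\infty|\ge (m-2)!$ (as a set).

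Next, I would invoke Bézout: since $V$ is an absolutely irreducible curve of degree $(m-2)!$ by Lemma \ref{lem8oct} and is not contained in $\Pi_\infty$ (by Lemma \ref{lem11octA}, which rules out solutions with $x_{m-1}=x_m=0$), the total intersection number of $V$ and $\Pi_\infty$ counted with multiplicity is exactly $(m-2)!$. Combining this with the $(m-2)!$ distinct set-theoretic points produced by the orbit, each intersection multiplicity must equal $1$. This gives transversality and simultaneously forces $V\cap\Pi_\infty$ to coincide with the $H$-orbit of $P_\varepsilon$, which is $\mathcal{O}_\varepsilon$ by definition.

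There is no real obstacle here: the argument is essentially a bookkeeping assembly of Lemmas \ref{lem6pct}, \ref{lem7oct}, \ref{lem11octA}, and \ref{lem8oct}, together with the higher-dimensional Bézout bound recalled in Section~\ref{back}. The only point that deserves care is to make sure the stabilizer of $P_\varepsilon$ in $G$ is already contained in the stabilizer of $\Pi_\infty$, so that the orbit–stabilizer count may be performed inside $H$; this is immediate from the explicit description of the cyclic stabilizer in Lemma \ref{lem7oct}, which shows that it fixes $X_m$ pointwise.
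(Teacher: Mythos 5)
Your proof is correct and follows essentially the same route as the paper: an orbit--stabilizer count inside the stabilizer $H\cong{\rm Sym}_{m-1}$ of $\Pi_\infty$ produces $(m-2)!$ distinct points of $V\cap\Pi_\infty$, and comparison with $\deg(V)=(m-2)!$ via B\'ezout forces every intersection multiplicity to be $1$ and the orbit to exhaust $V\cap\Pi_\infty$. Your write-up is if anything slightly more explicit than the paper's about why transversality follows (matching the set-theoretic count against the total intersection number), but the argument is the same.
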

\begin{proof} Since $|G|=m!$, the subgroup of $G$ preserving $\Pi_\infty$ has order $(m-1)!$. Since $P_\varepsilon\in \Pi_\infty$, Lemma \ref{lem7oct} yields that $\Pi_\infty$ contains at least
$(m-1)!/(m-1)=(m-2)!$ pairwise distinct points of $V$.  On the other hand, $|V\cap \Pi_\infty |\le \deg(V)$ by Lemma \ref{lem1}. As $\deg(V)=(m-2)!$ by Lemma \ref{lem8oct}, the first claim follows. Since the
stabilizer of $\Pi_\infty$ in $G$ has order $(m-1)!$, the first claim together with Lemma \ref{lem7oct} prove the second claim.
\end{proof}
\begin{lemma}
\label{lem18oct} Let $g\in G$ be a nontrivial element fixing a point of $V$. If $g$ acts on ${\bf{V}}_{m}$ fixing $X_i$ and $X_j$ then $g$ is a transposition on $\{X_1,X_2,\ldots,X_{m}\}$ and an involutory homology of $PG(m-2,\mathbb{K})$.
Furthermore, the fixed points of $g$ in $V$ are as many as $(m-2)!$ and they are all the common points of $V$ with the hyperplane of equation $X_l=X_n$ where $X_n=g(X_l)$, and $l\ne n$.
\end{lemma}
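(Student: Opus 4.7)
The plan is to encode the identity $g(P)=P$ as a coordinate relation, and then exploit the constraints of Lemmas \ref{lem11octA} and \ref{lem11octB} to force $g$ to be a single transposition. Let $\sigma$ be the permutation of $\{1,\dots,m\}$ induced by $g$, so $\sigma(i)=i$ and $\sigma(j)=j$ by hypothesis, and $g$ acts on ${\bf V}_{m}$ by $X_k\mapsto X_{\sigma(k)}$. Writing $P=(\xi_1:\dots:\xi_m)$, the condition $g(P)=P$ in projective coordinates amounts to the existence of $\lambda\in \K^{*}$ with $\xi_k=\lambda\,\xi_{\sigma(k)}$ for every $k$. Specializing at $k=i$ and $k=j$ gives $(\lambda-1)\xi_i=(\lambda-1)\xi_j=0$. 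Since Lemma \ref{lem11octA}, combined with the $G$-symmetry of $V$, forbids any two coordinates of $P$ from both vanishing, at least one of $\xi_i,\xi_j$ is non-zero and $\lambda=1$. Hence $\xi_k=\xi_{\sigma(k)}$ for every $k$, meaning each $\sigma$-orbit groups indices where $P$ has the same coordinate value.

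Next, I would determine the cycle type of $\sigma$ from the restrictions of Lemma \ref{lem11octB}. Part (ii) rules out any $\sigma$-orbit of length $\ge 3$ (it would produce three equal coordinates), and part (i) rules out two disjoint orbits of length $2$ (two distinct pairs of equal non-zero coordinates). A non-trivial orbit of length $2$ cannot consist of two zero entries, by the $G$-symmetric Lemma \ref{lem11octA}; once such an orbit has non-zero value, Lemma \ref{lem11octB}(iii) prevents any additional zero coordinate from appearing. As $g$ is non-trivial, the only remaining possibility is that $\sigma$ has exactly one orbit of length $2$, say $\sigma=(l\,n)$ with $l,n\notin\{i,j\}$ and $\xi_l=\xi_n\ne 0$. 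This proves the first assertion.

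For the second assertion, let $Q=(\eta_1:\dots:\eta_m)\in V$ be any point fixed by $g=(X_l\,X_n)$. Repeating the argument above with $\sigma$ now fixing the $m-2\ge 3$ indices different from $l,n$, and recalling that at most one coordinate of $Q$ vanishes, one finds that the proportionality constant is again $1$; hence $\eta_l=\eta_n$. Conversely, every point of $V$ on the hyperplane $\Pi:X_l=X_n$ is fixed by $g$. Thus the fixed-point locus of $g$ in $V$ coincides with $V\cap\Pi$. Finally, the proof of Lemma \ref{lem8oct}, carried out explicitly for the hyperplane $X_{m-1}=X_m$, transfers to $\Pi$ by conjugating with an element of $G$ that sends $\{l,n\}$ to $\{m-1,m\}$; this gives $|V\cap\Pi|=(m-2)!$.

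The main conceptual point is the interplay between the $\sigma$-orbit structure and the exclusion clauses of Lemma \ref{lem11octB}. The only moment calling for care is to verify $\lambda=1$: this is where the hypothesis of two fixed coordinates $X_i,X_j$ is genuinely used, since it guarantees at least one non-zero $\xi$-entry among the fixed positions. All other steps reduce to routine bookkeeping.
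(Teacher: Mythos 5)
Your proof is correct and follows essentially the same route as the paper: use the fixed coordinates $X_i,X_j$ together with Lemma \ref{lem11octA} to force the proportionality constant to be $1$, invoke Lemma \ref{lem11octB} to restrict the cycle structure of $\sigma$ to a single transposition, and then identify the fixed locus with $V\cap\Pi$ and count it via the $(m-2)!$ permuted points and the degree bound of Lemma \ref{lem8oct}. The only cosmetic difference is that the paper re-runs the stabilizer-orbit count inside this proof rather than citing the proof of Lemma \ref{lem8oct}, and your appeal to part (iii) of Lemma \ref{lem11octB} is not actually needed.
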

\begin{proof} Up to a change of the indices, $g$ fixes $X_{m}$ and $X_{m-1}$. Let $P=(x_1,\ldots,x_{m-1},x_{m})\in V$ be a fixed point of $g$. From Lemma \ref{lem11octA}, $x_{m-1}$ and $x_{m}$ do not vanish
simultaneously. Therefore, $x_{m}=1$ may be assumed. Furthermore, if $X_{g(l)}=g(X_l)$ then $g(P)=P$ yields $x_{g(l)}=cx_l$ for some $c\in \mathbb{K}$ and $l=1,\ldots,m$. In particular, since $g(X_{m})=X_{m}$, we
have $x_{m}=cx_{m}$ whence $c=1$. As $g$ is nontrivial, the set $\mathcal{M}=\{l\mid l\neq g(l), 1\le l \le m-2\}$ is non empty.
By $c=1$,  $l\in \mathcal{M}$ yields that $x_{g(l)}=x_{l}$. By (i) and (ii) of Lemma  \ref{lem11octB}. this implies $|\mathcal{M}|\le 2$. Since also $|\mathcal{M}|\ge 2$ holds, the claim follows.
To show the other claims, observe that the transposition $g=(X_l X_n)$ is the involutory homology of ${\rm{PG}}(m-1,\mathbb{K})$ associated with the $(0,1)$ matrix $\{g_{u,v}\}$ whose $1$ entries are $g_{u,u}$ for $u\in
\{1,2,\ldots, m\}\setminus\{l,n\}$, $g_{l,n}$ and $g_{n,l}$. In particular, its axis is the hyperplane $\Pi$ of equation $X_{l}=X_{n}$, and its center is the point $C=(0:0:\cdots:-1:\cdots :0: \cdots :1:\cdots :0)$.
Moreover, $x_{l}=x_{n}$ as $P$ is fixed by $g$. From Lemma \ref{lem11octA}, $x_{l}=x_{n}=1$ may be assumed. This together with (i) of Lemma \ref{lem11octB} yield that $x_u\neq x_v$ for $1\le u<v \le m$ and
$(u,v)\neq(i,j).$ Therefore, the images of $P$ under the action of the stabilizer $H$ of $X_{l}$ and $X_{n}$ in $G$ are all pairwise distinct and their number is $|H|=(m-2)!$.
Since $H$ preserves both $V$ and $\Pi$, all the images of $P$ are in $V\cap \Pi$. Thus $|V\cap \Pi|\ge (m-2)!$. On the other hand, Lemma \ref{lem8oct} shows $|V\cap \Pi|\le (m-2)!$ whence the second part of the claim
follows.
\end{proof}
\begin{lemma}
\label{lem5oct} $V$ has a unique component of dimension $1$.
\end{lemma}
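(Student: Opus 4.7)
The plan is to argue by contradiction: assume $V$ has $r\ge 2$ distinct $1$-dimensional components $V_1,\ldots,V_r$, and derive a contradiction from Lemma \ref{lem10octAchar0}. Since $V$ is non-singular by Lemma \ref{lem1}, any two distinct irreducible components of $V$ are pairwise disjoint, and by Lemma \ref{lem8oct} together with the additivity of degree their degrees sum to $(m-2)!$.

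First I would show that $G=\Sym_m$ preserves each component setwise. Let $\tau=(X_iX_j)\in G$ be any transposition. By B\'ezout's theorem the hyperplane $\Pi\colon X_i=X_j$ meets $V$, so $\tau$ fixes points of $V$ and Lemma \ref{lem18oct} applies, identifying its fixed-point set on $V$ with $V\cap\Pi$. Every $1$-dimensional component $V_l$ meets $\Pi$: if $V_l\not\subset\Pi$ this holds by B\'ezout, and if $V_l\subset\Pi$ then $\tau$ fixes $V_l$ pointwise already. Thus $V_l$ contains a point fixed by $\tau$, which then lies in $V_l\cap\tau(V_l)$; since distinct components are disjoint, this forces $\tau(V_l)=V_l$. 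As transpositions generate $G$, every component is $G$-invariant.

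Next, let $V_1$ be the component containing the point $P_\omega$ of Lemma \ref{lem10octAchar0}. The $G$-invariance of $V_1$ implies that the whole orbit $\mathcal{O}_\omega=G\cdot P_\omega$ lies in $V_1$. Lemma \ref{lem10octAchar0} then yields
\[
V\cap\Pi_\omega=\mathcal{O}_\omega\cap\Pi_\omega\subseteq V_1.
\]
If a second component $V_2$ existed, then $V_2\cap\Pi_\omega$ would be non-empty (since $V_2$ has dimension $1$), giving $V_2\cap V_1\neq\emptyset$ and contradicting disjointness. Hence $r=1$.

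The subtlest step is showing that every transposition stabilizes each component: it hinges on Lemma \ref{lem18oct}, which pins down the fixed locus of a transposition on $V$, combined with the fact that any positive-dimensional subvariety of projective space meets every hyperplane. Once this group-invariance step is in place, the contradiction with Lemma \ref{lem10octAchar0} is immediate.
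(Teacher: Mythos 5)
Your proof is correct, and it follows the same overall strategy as the paper (non-singularity forces distinct components to be disjoint, so any automorphism fixing a point of a component must preserve that component), but the execution of the key invariance step is genuinely different. The paper fixes the distinguished component $\cC$ through $P_\varepsilon$, exhibits one transposition $g$ and one $m$-cycle $u$ each fixing a non-singular point of $\cC$, and then invokes the fact that $\langle g,u\rangle={\rm{Sym}}_m$ to conclude that $G$ preserves $\cC$; it also needs Lemma \ref{lem8octU} to identify all one-dimensional components as $G$-images of $\cC$. You instead show that \emph{every} transposition preserves \emph{every} one-dimensional component, using only that a positive-dimensional projective variety meets every hyperplane and that points on the axis $X_i=X_j$ are fixed by $(X_iX_j)$; since transpositions generate ${\rm{Sym}}_m$ unconditionally, this sidesteps the paper's appeal to the generation of ${\rm{Sym}}_m$ by a transposition together with an $m$-cycle (a fact which requires the two to be suitably positioned, e.g.\ an adjacent transposition or $m$ prime), and it makes the final contradiction via $\mathcal{O}_\omega$ and Lemma \ref{lem10octAchar0} immediate. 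Two minor points you leave implicit: the claim that degrees of the components sum to $(m-2)!$ is never used and can be dropped, and the assertion that $P_\omega$ lies on a one-dimensional component deserves a word (it follows because every irreducible component of the complete intersection has dimension at least $1$, as in the proof of Lemma \ref{lem1D}). Neither affects the validity of the argument.
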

 \begin{proof} From Lemma \ref{lem1D}, the irreducible components of $V$ have dimension at most $1$, and at least one of them is an absolutely irreducible curve $\cC$. Let $P$ be a common point of $\cC$ and $\Pi_\infty$. By the
 second claim of Lemma \ref{lem8octU}, we can assume $P=P_{\varepsilon}$ for a $(m-1)$-th primitive root of unity $\varepsilon$.
 Since $P$ is a nonsingular point of $V$, $\cC$ is the unique irreducible component of $V$ containing $P$. The last claim in Lemma \ref{lem8octU} ensures that this holds for each point in $V\cap \Pi_\infty$. In
 particular, the $1$-dimensional irreducible components of $V$ are exactly the images of $\cC$ under the action of $G$. Clearly, these absolutely irreducible curves $\cC=\cC_1,\cC_2,\ldots,\cC_l$ have the same degree
 $k=\deg(\cC)$, and $kl=\deg(V)$.
 The hyperplane $\Pi$ of equation $X_{m}=X_{m-1}$ meets $\cC$ nontrivially, and let $R\in \cC\cap \Pi$. Clearly the transposition $g=(X_{m}X_{m-1})$ in $G$  which fixes each $X_i$ for $1\le i \le m-2$ and interchanges
 $X_{m}$ with $X_{m-1}$ fixes $R$. Furthermore, $\cC$ contains a point $S\in V$ lying on the hyperplane of equation $\Pi_\omega :X_{m-1}=\omega X_{m}$. By Lemma \ref{lem10octAchar0}, $S\in
 \mathcal{O}_\omega$ and some element $u\in G_S$  acts on the basis $(X_1,X_2,\ldots,X_{m})$ as a $m$-cycle. Since $S$ is a nonsingular point, $u$ preserves $\cC$. Indeed, assume on the contrary that $u$ takes $\cC$ to $\mathcal{C}_i$, $i\neq 1$. Then, since $u$ fixes $S$, $S$ must be in the intersection $\cC\cap \mathcal{C}_i$, a contradiction with the non-singularity of $S$.
 Therefore $g$ and $u$ are automorphisms of $\cC$. By a well known result, the group generated by the transposition $g$ and the $m$-cycle $u$ is the whole symmetric group $G={\rm{Sym}}_{m}$.
 This shows that $G$ preserves $\cC$ and hence $l=1$.
 \end{proof}

From now on, the irreducible non-singular curve $\cC$ stands for the unique $1$-dimensional component of $V$. As $G$ preserves $\cC$ and $G\le {\rm{PGL}}(m,\mathbb{K})$, $G$ is a $\mathbb{K}$-automorphism group of $\cC$.
Since every hyperplane meets $\cC$, the final claim of Lemma \ref{lem10octAchar0} shows that some point of $\mathcal{O}_\omega$ (and hence $P_\omega$) is in $\cC$. Therefore, the $G$-orbit $\Omega_\omega$ of $P_\omega$
 has   length $|G|/m=(m-1)!$.  Similarly, Lemmas \ref{lem7oct} and \ref{lem8octU} yield  that $P_\varepsilon \in \cC$, and hence the $G$-orbit $\Omega_\varepsilon$ of $P_\varepsilon$ has length
 $|G|/(m-1)=m(m-2)!$. A third short $G$-orbit $\Omega_\theta$ arises from transpositions, as Lemma \ref{lem18oct} shows that every transposition of $G$ fixes a point of $\cC$.

 \begin{lemma}
 \label{lem14octC}  The hyperplane of equation $X_1+X_2+\ldots+X_{m}=0$ is the unique hyperplane of ${\rm{PG}}(m-1,\mathbb{K})$ which contains $\cC$.
\end{lemma}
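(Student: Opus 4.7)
The plan is to reduce the uniqueness statement to a fact from the representation theory of the symmetric group. That $H: X_1 + X_2 + \ldots + X_m = 0$ contains $\cC$ is immediate from the first equation of (\ref{sy}). For the converse, let $W \subseteq {\bf V}_m$ be the $\mathbb K$-linear span of all non-zero vectors $v = (\xi_1, \ldots, \xi_m)$ whose projective class $[v]$ belongs to $\cC$. Hyperplanes of $\textrm{PG}(m-1, \mathbb K)$ containing $\cC$ correspond bijectively to hyperplanes of ${\bf V}_m$ containing $W$, so the lemma is equivalent to the assertion that $W$ is itself a hyperplane, i.e.\ $\dim W = m-1$.

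Since $\cC \subseteq H$, every such $v$ satisfies $\xi_1 + \cdots + \xi_m = 0$, so $W \subseteq W_0 := \ker(X_1 + \cdots + X_m)$, which has dimension $m-1$. Next I would observe that $G = \textrm{Sym}_m$ acts linearly on ${\bf V}_m$ and preserves $\cC$ by the remarks in Section \ref{back}; therefore the set of vectors representing points of $\cC$ is $G$-stable, and hence so is its $\mathbb K$-linear span $W$. In particular $W$ is a $G$-invariant subspace of $W_0$.

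The key point is that the action of $G$ on $W_0$ is the standard representation of $\textrm{Sym}_m$, and this representation is irreducible over $\mathbb K$ under our hypotheses. Indeed, in characteristic zero this is classical, and in characteristic $p$ the hypothesis $m \leq p-1$ forces $p \nmid m$, and for such $p$ the standard representation of $\textrm{Sym}_m$ is again absolutely irreducible. Consequently the $G$-invariant subspace $W \subseteq W_0$ is either $\{0\}$ or all of $W_0$. Since $\cC$ is a curve, it contains infinitely many points and so $W \neq \{0\}$; therefore $W = W_0$, as required.

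The one step that requires care is the invocation of irreducibility of the standard representation in positive characteristic, since the standard representation of $\textrm{Sym}_m$ over $\mathbb F_p$ fails to be irreducible exactly when $p \mid m$; the restriction $m \leq p-1$ built into the hypotheses of the paper is precisely what rules this out. Apart from that the argument is purely formal: identify the hyperplanes containing $\cC$ with $G$-invariant subspaces and apply Schur-type irreducibility.
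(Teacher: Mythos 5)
Your proof is correct, and it takes a different route from the paper's. The paper argues concretely: it takes the explicit point $P_\omega=(\omega:\omega^2:\cdots:\omega^{m}=1)\in\cC$ from Lemma \ref{lem10octAchar0}, notes that its image under the transposition $(X_1X_2)$ also lies on $\cC$, substitutes both points into a putative linear form $\alpha_1X_1+\cdots+\alpha_mX_m$ vanishing on $\cC$, and subtracts to get $\alpha_1=\alpha_2$; iterating over transpositions forces all $\alpha_i$ equal. You instead observe that the span $W$ of the vectors representing points of $\cC$ is a nonzero $G$-submodule of the sum-zero hyperplane $W_0$, and invoke the irreducibility of the standard representation of ${\rm{Sym}}_m$, which holds here because $m\le p-1$ forces $p\nmid m$ — you are right that this is exactly the divisibility condition that matters, and the paper's standing hypotheses guarantee it. The two arguments have the same engine (if one proves irreducibility of $W_0$ by hand, one applies a transposition to a vector with two unequal coordinates and subtracts — precisely the paper's computation), but your packaging is more conceptual: it needs no distinguished point of $\cC$ and shows that \emph{any} infinite $G$-invariant subset of the hyperplane $X_1+\cdots+X_m=0$ spans it, whereas the paper's version is elementary and self-contained, using only the point $P_\omega$ already in hand. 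Both are complete proofs; yours trades a short explicit calculation for a citation to (absolutely standard, but modular) representation theory.
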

\begin{proof} We have $P_\omega\in \cC$. Since $G$ preserves $\cC$, this yields that $Q=(\omega:1:\omega^2:\cdots:\omega^{m-1})$ is in $\cC$, as well. Assume that $\cC$ is contained in a hyperplane $\Pi$ of equation
$\alpha_1X_1+\alpha_2X_2+\ldots+\alpha_{m}X_{m}=0$ with $\alpha_i\in \mathbb{K}$. Then $\alpha_1+\omega\alpha_2+\omega^2\alpha_3+\ldots+\omega^{m-1}\alpha_{m-1}=0$ and
$\omega\alpha_1+\alpha_2+\omega^2\alpha_3+\ldots+\omega^{m-1}\alpha_{m-1}=0$ whence $\alpha_1=\alpha_2$ by subtraction. Similar argument shows that any two consecutive coefficient in the equation of $\Pi$ are equal. This
yields $\alpha_1=\alpha_2=\ldots=\alpha_{m-1}$, that is $\Pi$ has equation $X_1+X_2+\ldots+X_{m}=0$. On the other hand, as we have already pointed out in Section \ref{back}, the first equation in System (\ref{sy}) shows
that hyperplane of equation $X_1+X_2+\ldots+X_{m}=0$ contains $\cC$.
\end{proof}
 \begin{lemma}
\label{lem11octC} Every transposition of $G$ fixes exactly $(m-2)!$ points of $\cC$.
\end{lemma}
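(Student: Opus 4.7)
The plan is to reduce the statement to Lemma \ref{lem18oct}, which already shows that every transposition in $G$ fixes exactly $(m-2)!$ points of $V$. The only task left is to show that all these fixed points actually lie on the irreducible curve $\cC$, i.e.\ that the fixed-point count on $V$ transfers verbatim to $\cC$.

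First I would pick a transposition and, up to relabelling, write it as $g=(X_l X_n)$ with $l\ne n$. By Lemma \ref{lem18oct} the set $\mathrm{Fix}(g)\cap V$ equals $V\cap\Pi$, where $\Pi$ is the hyperplane of equation $X_l=X_n$, and this intersection has exactly $(m-2)!$ distinct points. Next I would argue $V=\cC$. Lemma \ref{lem1D} gives $\dim V=1$, and the proof of Lemma \ref{lem1} shows that the Jacobian of the $m-2$ forms defining (\ref{sy}) has maximal rank $m-2$ at every point of $V$. Hence $V$ is locally a complete intersection of pure codimension $m-2$ in $\mathrm{PG}(m-1,\mathbb{K})$, so every point of $V$ lies on a $1$-dimensional irreducible component; but by Lemma \ref{lem5oct} the unique such component is $\cC$, so $V=\cC$ and the $(m-2)!$ fixed points are automatically on $\cC$.

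Should one prefer to avoid the purity argument, an alternative route is the following, which I would state as a one-line observation. The curve $\cC$ is $1$-dimensional in $\mathrm{PG}(m-2,\mathbb{K})$, hence $\cC\cap\Pi\ne\emptyset$; pick $P\in\cC\cap\Pi$. The subgroup $H\le G$ pointwise fixing $\{X_l,X_n\}$ has order $(m-2)!$, preserves both $\cC$ and $\Pi$, and by Lemma \ref{lem11octB}(i)--(ii) acts with trivial stabilizer on $P$. Therefore $|\cC\cap\Pi|\ge (m-2)!$, and the bound $|\cC\cap\Pi|\le\deg(\cC)\le\deg(V)=(m-2)!$ from Lemmas \ref{lem1} and \ref{lem8oct} forces equality; comparing with Lemma \ref{lem18oct} then places all the fixed points of $g$ on $\cC$.

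I do not expect a substantial obstacle: the whole argument is essentially a transfer from $V$ to $\cC$. The only mildly delicate point is ensuring that no fixed point of $g$ can escape to a lower-dimensional component of $V$, and that is precisely what either the purity step or the orbit/Bézout step above addresses.
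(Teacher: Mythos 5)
Your proposal is correct, and it is really two proofs in one. Your ``alternative'' route is essentially the paper's own argument: the authors likewise take a fixed point $R\in\cC\cap\Pi$ of the transposition, use Lemmas \ref{lem11octA} and \ref{lem11octB} to see that its remaining $m-2$ coordinates are pairwise distinct and non-zero, let the stabilizer $H\cong{\rm Sym}_{m-2}$ of the two swapped coordinates act to produce $(m-2)!$ distinct fixed points on $\cC$, and cap the count by $|\cC\cap\Pi|\le\deg(V)=(m-2)!$ (they pass to an arbitrary transposition by conjugacy rather than by relabelling, an immaterial difference). Your first route is genuinely different: it quotes Lemma \ref{lem18oct} for the count on $V$ and then argues $V=\cC$ already at this stage, whereas the paper only proves $V=\cC$ later (Lemma \ref{lem12oct}) by an argument that itself invokes the present lemma, so that statement could not simply be cited here. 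Your way around this is legitimate and introduces no circularity: by the projective dimension theorem (the componentwise form of the result the authors already cite from \cite{sha} in Lemma \ref{lem1D}), every irreducible component of an intersection of $m-2$ hypersurfaces in ${\rm PG}(m-1,\mathbb{K})$ has dimension at least $1$; combined with Lemma \ref{lem1D} and the uniqueness of the one-dimensional component (Lemma \ref{lem5oct}), this forces $V=\cC$. One caveat on your phrasing: ``maximal Jacobian rank, hence pure codimension $m-2$'' inverts the logic --- the rank computation from Lemma \ref{lem1} gives $\dim T_PV\le 1$, while the lower bound $\dim_P V\ge 1$ comes from Krull's height theorem; the two together give regularity and a unique one-dimensional branch through each point. (You should also note, as the paper does implicitly, that every transposition does fix some point of $V$, since the hyperplane $X_l=X_n$ meets the positive-dimensional $V$ and each point of that intersection is fixed.) With these readings both routes are sound, and the first one buys a cleaner global picture, showing in effect that the irreducibility statement of Lemma \ref{lem12oct} is available much earlier than where the paper places it.
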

 \begin{proof} We have already pointed out in the proof of Lemma \ref{lem5oct} that the transposition $g=(X_{m-1}X_{m})$  fixes a point $R\in \cC\cap \Pi$ where $\Pi$ is the hyperplane of equation $X_{m}=X_{m-1}$. By
 Lemma \ref{lem11octA} and \ref{lem11octB}, we can assume $R=(x_1:\ldots:x_{m-2}:1:1)$ and that $x_1,\dots,x_{m-2}$ are pairwise distinct and non zero. Therefore any point whose first $m-2$ coordinates are a permutation
 on $\{x_1,\ldots,x_{m-2}\}$ is also fixed by $g$. Thus $g$ has at least $(m-2)!$ fixed points. On the other hand, if $P\in\cC$ is not on $\Pi$ then the last two coordinates of $P$ are different and hence $P$ is not fixed
 by $g$.
 Therefore, the claim holds for $g$. Since the transpositions are pairwise conjugate in $G$, the claim holds true for every transposition in $G$.
 \end{proof}
 \begin{lemma}
 \label{lem18octA} Let $P$ be a point of $\cC$ which is fixed by a transposition $g\in G$. Then the tangent $\ell$ to $\cC$ at $P$ is the line joining $P$ with the center of $g$.
 \end{lemma}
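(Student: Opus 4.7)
My plan is to exploit the projective description of the involutory homology $g$ and combine it with the transversality of the intersection $\cC\cap\Pi$ established through the preceding lemmas.

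First, by conjugation in $G$, I may assume $g=(X_{m-1}X_m)$. As recorded in Lemma \ref{lem18oct}, $g$ is the involutory homology of $\PG(m-1,\mathbb K)$ with axis the hyperplane $\Pi:X_{m-1}=X_m$ and center $C=(0:\cdots:0:1:-1)$; moreover $P\in \Pi$ (so in particular $P\neq C$, since $P$ has a non-zero coordinate in some position $i\le m-2$ by Lemma \ref{lem11octA} and Lemma \ref{lem11octB}). Because $g\in G$ and $G$ preserves $\cC$, the element $g$ acts as a $\mathbb K$-automorphism of $\cC$ fixing $P$; hence $g$ preserves the tangent line $\ell$ to $\cC$ at $P$ (setwise).

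Next, I would classify the lines of $\PG(m-1,\mathbb K)$ preserved by $g$. Since $\mathrm{char}(\mathbb K)\ne 2$, a projective involution on a line has either $0$ or $2$ fixed points; the fixed locus of $g$ in $\PG(m-1,\mathbb K)$ consists of $\Pi$ together with the isolated point $C$. A brief case check shows that a line $r$ is preserved by $g$ if and only if either $C\in r$ or $r\subset\Pi$. Hence the tangent $\ell$ falls into one of these two cases.

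The remaining step is to rule out $\ell\subset\Pi$. By Lemma \ref{lem11octC} the hyperplane $\Pi$ meets $\cC$ in at least $(m-2)!$ distinct points (the fixed points of $g$ on $\cC$), and by Lemma \ref{lem8oct} we have $\deg(\cC)=(m-2)!$. By B\'ezout's theorem the total intersection multiplicity of $\Pi$ with $\cC$ is exactly $(m-2)!$, so $\Pi$ must meet $\cC$ transversally at each of these points; in particular $I(P,\Pi\cap\cC)=1$. Since $P$ is non-singular (Lemma \ref{lem1}), the condition $\ell\subset\Pi$ would force $I(P,\Pi\cap\cC)\ge 2$, a contradiction. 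Therefore $\ell$ passes through $C$, as claimed.

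The main obstacle is the transversality argument: it relies crucially on combining the exact count of fixed points of $g$ on $\cC$ (from Lemma \ref{lem11octC}) with the degree computation (Lemma \ref{lem8oct}) to saturate B\'ezout's bound; everything else is formal manipulation of the homology $g$.
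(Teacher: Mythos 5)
Your proof is correct, but it takes a genuinely different route from the paper's. The paper argues by direct computation: writing $P=(x_1:\cdots:x_{m-2}:1:1)$, it identifies the tangent line $\ell$ as the intersection of the $m-2$ tangent hyperplanes $\sum_j x_j^{k-1}X_j=0$ of the defining hypersurfaces $f_k=0$, and simply checks that the center $C=(0:\cdots:0:-1:1)$ satisfies each of these linear equations (the last two coefficients being both $1$). You instead argue synthetically: $g$ preserves $\ell$, the $g$-invariant lines are exactly those through $C$ and those inside the axis $\Pi$, and the second alternative is excluded because the $(m-2)!$ distinct fixed points of $g$ on $\cC$ saturate the B\'ezout bound $\deg(\cC)=(m-2)!$ for $\Pi\cap\cC$, forcing transversality at $P$ (note $\Pi\not\supset\cC$ by Lemma \ref{lem14octC}, so B\'ezout does apply). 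Both arguments are sound and rest only on results established before Lemma \ref{lem18octA}; there is no conflict with the later claim $I(P,\cC\cap\Pi_{\lambda,\mu})\ge 2$ in Lemma \ref{lem19oct}, since there the relevant transposition is disjoint from $(X_{m-1}X_m)$ and its fixed points do not lie on the axis of $(X_{m-1}X_m)$. The paper's computation is shorter and produces the tangent hyperplanes explicitly (which is reused elsewhere), while your argument never touches the defining equations and would apply verbatim to any involutory homology in $\aut(\cC)$ whose axis cuts $\cC$ in a reduced divisor; the price is that you must supply the transversality of $\Pi\cap\cC$, which you do correctly via Lemmas \ref{lem11octC} and \ref{lem8oct}.
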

 \begin{proof}
 W.l.o.g. we can assume $g=(X_{m-1}X_{m})$. Then $g$ is an homology with center $C=(0:0:\cdots:0:-1:1)$ and axis the (pointwise fixed) hyperplane $\Pi$ of equation $X_{m}=X_{m-1}$. Also,
 $P=(x_1:\cdots:x_{m-2}:1:1)$.
 The tangent line $\ell$ is the intersection of the  tangent hyperplanes in $P$ of the hypersurfaces of equation $f_i=0$ of system \eqref{sy}, $i=1,\dots,m-2$, and hence its equation is given by
 \begin{equation}\label{sys11}
\left\{
\begin{array}{llll}
X_1 + X_2 + \ldots + X_{m-2}+X_{m-1}+X_{m}=0;\\
x_1X_1  + x_2X_2+\ldots +x_{m-2}X_{m-2}+X_{m-1}+X_{m}=0;\\
\cdots\cdots\\
\cdots\cdots\\
x_1^{m-2-1}X_1 + x_2^{m-2-1}X_2+ \ldots+x_{m-2}^{m-2-1}X_{m-2}+X_{m-1}+X_{m}=0.
\end{array}
\right.
\end{equation}
Since the coordinates of $C$ satisfy system \eqref{sys11}, the claim follows.
 \end{proof}
 \begin{lemma}
 \label{lem19oct} Let $H$ be the stabilizer of $X_i$ and $X_j$ in $G$. Then the quotient curve $\tilde{\cC}$ of $\cC$ with respect to $H$ is rational.
 \end{lemma}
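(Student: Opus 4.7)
The plan is to realize the quotient $\tilde{\cC}$ as the projective line by exhibiting a natural projection of $\cC$ onto $\PG(1,\mathbb{K})$ and verifying that it has the correct degree. Without loss of generality, assume $H=G_{m-1,m}$ is the stabilizer of $X_{m-1}$ and $X_m$; then $H\cong\Sym_{m-2}$ and $|H|=(m-2)!$.

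First I would introduce the morphism
\[
\pi\colon\cC\to\PG(1,\mathbb{K}),\qquad (x_1:\cdots:x_m)\mapsto(x_{m-1}:x_m),
\]
namely the projection of $\cC$ from the $(m-3)$-dimensional coordinate subspace $X_{m-1}=X_m=0$ onto the line $X_1=\cdots=X_{m-2}=0$. By Lemma \ref{lem11octA}, no point of $\cC$ lies in the vertex of the projection, so $\pi$ is a regular morphism everywhere defined on $\cC$. Since every element of $H$ fixes both $X_{m-1}$ and $X_m$, the morphism $\pi$ is $H$-invariant and therefore factors through the quotient morphism $\phi\colon\cC\to\tilde{\cC}$ as $\pi=\bar\pi\circ\phi$ for a uniquely determined morphism $\bar\pi\colon\tilde{\cC}\to\PG(1,\mathbb{K})$.

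Next I would compute $\deg(\pi)=(m-2)!$. The upper bound $\deg(\pi)\le\deg(\cC)=(m-2)!$ follows from Lemma \ref{lem8oct}, since each fiber of $\pi$ is cut out on $\cC$ by a hyperplane of the pencil through the vertex and hence has at most $\deg(\cC)$ points. For the matching lower bound, the fiber of $\pi$ over $(1:1)$ is the intersection of $\cC$ with the hyperplane $\Pi\colon X_{m-1}=X_m$; the proof of Lemma \ref{lem8oct} exhibits $(m-2)!$ distinct points in this intersection, namely the coordinate-permutations of any one of them. Hence $\deg(\pi)=(m-2)!=|H|=\deg(\phi)$, which together with the tower law $\deg(\pi)=\deg(\bar\pi)\cdot\deg(\phi)$ forces $\deg(\bar\pi)=1$. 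Thus $\bar\pi$ is a birational morphism onto $\PG(1,\mathbb{K})$, and $\tilde{\cC}$ is rational.

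The only delicate point is the equality $\deg(\pi)=(m-2)!$, but even this is essentially immediate from the proof of Lemma \ref{lem8oct}, which already displays a hyperplane meeting $\cC$ in exactly $(m-2)!$ distinct simple points; everything else in the argument is formal.
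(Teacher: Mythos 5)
Your proof is correct, and it rests on the same underlying geometric idea as the paper's — projecting $\cC$ from the codimension-two vertex $X_{m-1}=X_m=0$ onto a line — but you close the argument by a genuinely different (and shorter) route. The paper identifies the fibres of the projection with the $H$-orbits at \emph{every} point: for points with trivial stabilizer this is essentially your degree count, but for points fixed by a transposition the paper must show, via the tangent-line Lemma \ref{lem18octA}, that the corresponding hyperplane of the pencil meets $\cC$ with intersection multiplicity at least $2$ at each such point, so that the half-length orbit still fills the whole fibre. Your function-field tower $\mathbb{K}(x_{m-1}/x_m)\subseteq\mathbb{K}(\cC)^H\subseteq\mathbb{K}(\cC)$, with $[\mathbb{K}(\cC):\mathbb{K}(x_{m-1}/x_m)]=\deg(\pi)=(m-2)!=|H|=[\mathbb{K}(\cC):\mathbb{K}(\cC)^H]$, sidesteps that case analysis entirely, since rationality of the quotient depends only on the generic fibre; what the paper's longer route buys is the exact fibre structure at the ramified points, which it reuses later (Lemma \ref{lem12oct}, Proposition \ref{pro5mar}, and the Galois-subspace interpretation). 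Two small remarks on your degree computation: first, $\deg(\pi)=\deg(\cC)=(m-2)!$ follows at once because every hyperplane of the pencil cuts out on $\cC$ a divisor of degree $\deg(\cC)$ and the vertex is disjoint from $\cC$ by Lemma \ref{lem11octA}, so the separate lower bound is not really needed; second, if you do want the lower bound from the fibre over $(1:1)$, you should cite Lemma \ref{lem11octC}, which places $(m-2)!$ distinct points on $\cC\cap\Pi$, rather than the proof of Lemma \ref{lem8oct}, which only produces points of $V\cap\Pi$ — at the stage where this lemma is proved, the equality $V=\cC$ has not yet been established.
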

 \begin{proof} Up to a reordering of the coordinates, we can assume $(i,j)=(m-1,m)$. The hyperplanes $\Pi_{\lambda,\mu}$ of equations $\lambda X_{m-1}+\mu X_{m}=0$ form the pencil through the intersection $\Sigma$ of
 the hyperplanes $X_{m-1}=0$ and $X_{m}=0$.
$\Sigma$ is a subspace of ${\rm{PG}}(m,\mathbb{F})$ of codimension $2$, and it is disjoint from $\cC$  by Lemma \ref{lem11octA}. Furthermore, $H$ preserves each $\Pi_{\lambda,\mu}$. Take any point of $P\in\cC$ whose stabilizer $H_P$ in $H$ is trivial. Then the $H$-orbit $\Delta$ of $P$ has
length $(m-2)!$ and $\Delta$ is contained in the unique hyperplane  $\Pi_{\lambda,\mu}$ of the pencil which contains $P$. Since $\deg(\cC)=(m-2)!$, $\Delta$ coincides with the intersection of $\cC$ with
$\Pi_{\lambda,\mu}$.
If the stabilizer $H_P$ of $P\in \cC$ in $H$ is nontrivial, from Lemma \ref{lem18oct} and the proof of Lemma \ref{lem11octC}, then $|H_P|=2$ and the only nontrivial element in $H_P$ is a transposition $h$. Indeed if $h'$
is a nontrivial element of $H_P$, from Lemma \ref{lem18oct} it is a transposition $(X_i X_j)$, and from the proof of Lemma \ref{lem11octC}, $x_i=x_j$ and $x_{m-1}=x_{m}$, which is a contradiction with Lemma
\ref{lem11octB}.
From Lemma \ref{lem18octA}, the tangent line $\ell$ to $\cC$ at $P$ contains the center $C$ of $h$. The hyperplane $\Pi$ of equation $X_{m-1}-X_{m}=0$ is the axis of the transposition $g$ which interchanges $X_{m}$ and $X_{m-1}$. Since $g$ and $h$ commute, it follows that $C\in \Pi$. Therefore, $\ell$ is contained in $\Pi$, and hence $I(P,\cC\cap \Pi)\ge 2$, where $I(P,\cC\cap \Pi)$ is the intersection multiplicity of $\cC$ and $\Pi$ at $P$. From the higher dimensional generalization of
B\'ezout's theorem,
$|\cC\cap \Pi|\le \ha (m-2)!$. Thus, $|\cC\cap \Pi|= \ha (m-2)!$, and, again,  $\Delta$ coincides with $\cC\cap \Pi$. This shows that $\tilde{\cC}$ is isomorphic to the rational curve which is the projection of $\cC$ from
the vertex $\Sigma$.
 \end{proof}
\begin{lemma}
\label{lem11octE} Let $\gg$ be the genus of  $\cC$. Then
\begin{equation}
\label{eq11octB} 2\gg-2= \ha ((m-2)(m-3)-4)(m-2)!.
\end{equation}
\end{lemma}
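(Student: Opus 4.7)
The plan is to apply the Hurwitz genus formula \eqref{hufo} to the subgroup $H\le G$ that stabilizes the two coordinates $X_{m-1}$ and $X_m$, acting by $\mathbb{K}$-automorphisms on $\cC$. We have $H\cong\mathrm{Sym}_{m-2}$, so $\ell:=|H|=(m-2)!$; since $m\le p-1$ in positive characteristic, $p\nmid (m-2)!$ and $H$ is tame. Lemma \ref{lem19oct} has already identified the quotient $\tilde{\cC}=\cC/H$ as a rational curve, so $\mathfrak{g}(\tilde{\cC})=0$, and the formula reduces to
\begin{equation*}
2\gg-2=-2(m-2)!+\sum_{i}(\ell-\ell_i),
\end{equation*}
where $\ell_i$ ranges over the lengths of the short $H$-orbits on $\cC$.

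The next step is to determine the short orbit structure. First I would show that the point-stabilizer $H_P$ has order at most $2$ for every $P\in\cC$: by Lemma \ref{lem18oct} any nontrivial element of $H$ fixing $P$ must act as a transposition on $\{X_1,\ldots,X_m\}$ (since it already fixes $X_{m-1},X_m$), and if two distinct transpositions of $H$ both fixed $P$, their product would be a $3$-cycle or a double transposition in $H$ fixing $P$ and still fixing $X_{m-1},X_m$, contradicting Lemma \ref{lem18oct}. Hence either $H_P=\{1\}$ or $H_P$ is generated by a unique transposition $h\in H$. In the latter case the $H$-orbit of $P$ has length $\tfrac{1}{2}(m-2)!$.

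Now I would count the points of $\cC$ with nontrivial $H$-stabilizer. There are exactly $\binom{m-2}{2}$ transpositions in $H\cong\mathrm{Sym}_{m-2}$, and by Lemma \ref{lem11octC} each such transposition fixes exactly $(m-2)!$ points of $\cC$. By the uniqueness observed in the previous step, these fixed-point sets are disjoint, so the total number of ramified points is $\binom{m-2}{2}(m-2)!$. Dividing by the common short-orbit length $(m-2)!/2$ yields $(m-2)(m-3)$ short orbits, each contributing $\ell-\ell_i=(m-2)!/2$ to the ramification sum. Substituting gives
\begin{equation*}
2\gg-2=-2(m-2)!+(m-2)(m-3)\cdot\frac{(m-2)!}{2}=\tfrac{1}{2}\bigl((m-2)(m-3)-4\bigr)(m-2)!,
\end{equation*}
which is the claimed formula.

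The computation is essentially a bookkeeping of data already assembled in Lemmas \ref{lem18oct}, \ref{lem19oct} and \ref{lem11octC}; the only nontrivial point is the verification that $|H_P|\le 2$, which prevents higher-order stabilizers from contributing additional ramification and ensures the Hurwitz sum has the clean form above. Tameness of $H$ is crucial here because it keeps the ramification term linear in the stabilizer orders.
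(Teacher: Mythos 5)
Your proposal is correct and follows essentially the same route as the paper: apply the tame Hurwitz formula \eqref{hufo} to the stabilizer $H\cong{\rm{Sym}}_{m-2}$ of $X_{m-1},X_m$, use the rationality of $\cC/H$ from Lemma \ref{lem19oct}, and count $(m-2)(m-3)$ short orbits of length $\ha(m-2)!$ coming from the transpositions of $H$ via Lemmas \ref{lem18oct} and \ref{lem11octC}. Your bookkeeping (disjointness of the fixed-point sets of distinct transpositions, hence $|H_P|\le 2$) is in fact a slightly cleaner articulation of the orbit count that the paper states more tersely.
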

\begin{proof}
Let $H\cong \Sym_{m-2}$ be the subgroup of $G$ which fixes both $X_{m}$ and $X_{m-1}$. From Lemma \ref{lem18oct}, if $h\in H$ has a fixed point in $\cC$ then $h$ is a transposition.
If $P$ is a fixed point of a transposition then, up to a reordering of the coordinates,  
$P=(1,1,x_3,\dots,x_{m-1},x_{m})$. A point whose coordinates are a permutation of those of $P$ is another fixed point of the transposition. Among these points, those which are contained in the hyperplane
$X_{m-1}=X_{m}$, fixed by $H$, are as many as $(m-2)!$, and no more, since $\deg(\mathcal{C})=(m-2)!$.
Therefore, the number of short orbits of $H$ is equal to the number of choices of two values among $x_3,\dots,x_{m}$. As these are $m-2$ such distinct values, the short orbits are as many as $(m-2)(m-3)$. The claim follows from the Riemann-Hurwitz formula.
\end{proof}
\begin{lemma}
\label{lem12oct} $V$ is irreducible, that is, $V=\cC$.
\end{lemma}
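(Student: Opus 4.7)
The plan is to argue by contradiction, assuming $V\neq\cC$ and deriving a violation of the higher-dimensional B\'ezout bound. By Lemma \ref{lem5oct}, $\cC$ is the unique $1$-dimensional irreducible component of $V$, so any point $P\in V\setminus\cC$ would be an isolated (i.e., $0$-dimensional) component of $V$. Note also that $\deg(\cC)=(m-2)!$ (from the final calculation in the proof of Lemma \ref{lem5oct}), matching $\deg(V)$ given by Lemma \ref{lem8oct}.

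Assuming such a hypothetical $P$ exists, I would construct a hyperplane $\Pi$ through $P$ with two properties: (a) $\Pi$ does not contain $\cC$, and (b) $\Pi$ meets $\cC$ transversally, in $(m-2)!$ pairwise distinct points. Property (a) is immediate: by Lemma \ref{lem14octC}, the only hyperplane containing $\cC$ is the hyperplane of equation $X_1+X_2+\cdots+X_m=0$, so one can certainly choose a different one through $P$. Property (b) follows from standard facts on the dual variety: since $\cC$ is smooth and irreducible by Lemma \ref{lem1}, the tangent hyperplanes to $\cC$ fill out a proper closed subvariety of the dual projective space of $\PG(m-1,\K)$. The hyperplanes through $P$ form a hyperplane in that dual, and this linear subspace must meet the open transverse locus in a non-empty Zariski open set.

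For such a $\Pi$, the intersection $V\cap\Pi$ contains the $(m-2)!$ distinct points of $\cC\cap\Pi$ together with the extra point $P\notin\cC$, yielding $|V\cap\Pi|\geq(m-2)!+1$. On the other hand, because $\Pi$ does not contain the unique $1$-dimensional component $\cC$ of $V$, the set $V\cap\Pi$ is finite, and the higher-dimensional B\'ezout theorem recalled in Section \ref{back}, applied to the $m-1$ hypersurfaces consisting of the $m-2$ defining equations of $V$ together with $\Pi$, gives $|V\cap\Pi|\leq 1\cdot 2\cdots (m-2)\cdot 1=(m-2)!$. This contradiction forces $V=\cC$.

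The main technical hurdle is verifying (b), that is, the existence of a transverse hyperplane through the prescribed point $P$. This is a classical genericity argument for the dual variety of a smooth projective curve, but it requires some care to formulate cleanly without invoking machinery beyond what the paper already uses; a more computational alternative would be to exhibit, by hand, a one-parameter family of hyperplanes through $P$ (e.g.\ inside a pencil based on a codimension-$2$ subspace disjoint from $\cC$ by Lemma \ref{lem11octA}) and to combine the degree of $\cC$ with smoothness to single out a transverse member.
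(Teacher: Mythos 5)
Your overall strategy---force a hyperplane $\Pi$ through the hypothetical extra point $P\in V\setminus\cC$ to meet $\cC$ in $(m-2)!$ distinct points and then contradict the B\'ezout bound $|V\cap\Pi|\le (m-2)!$---is sound, and it is genuinely different from the paper's argument. The final counting is fine. But the step you flag as the ``main technical hurdle'' is a genuine gap, not merely a matter of clean formulation. In the dual of ${\rm{PG}}(m-1,\mathbb{K})$ the hyperplanes through $P$ form a subspace $P^{\vee}$ of dimension $m-2$, while the locus of hyperplanes containing some tangent line of $\cC$ is a union, over the $1$-dimensional family of points $Q\in\cC$, of $(m-3)$-dimensional subspaces, hence also has dimension up to $m-2$. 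So ``a proper closed subvariety of the dual'' does not prevent $P^{\vee}$ from being entirely contained in the tangent locus; this happens exactly when $P$ lies on infinitely many tangent lines of $\cC$, i.e.\ when $\cC$ is a strange curve with strange point $P$. This is a real possibility only in positive characteristic, which is precisely the setting the paper is most interested in (and where $V$ is later shown to be non-classical for $m=p-1$, so tangency behaviour of this curve is genuinely delicate). To close the gap you must exclude strangeness: either invoke Samuel's theorem that the only smooth strange curves are lines and characteristic-$2$ conics (here $p\ge 7$ and $\gg(\cC)>0$ by Lemma \ref{lem11octE}), or, more in the spirit of the paper, note that a strange point would be unique, would lie in the hyperplane $X_1+\cdots+X_m=0$ containing all tangent lines, and would be fixed by $G\cong{\rm{Sym}}_m$; since $p>m$, $G$ acts irreducibly on that hyperplane and fixes none of its points. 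Once $P$ lies on only finitely many tangent lines, the incidence count gives $\dim\bigl(P^{\vee}\cap\,(\text{tangent locus})\bigr)\le m-3$ and a transverse member of $P^{\vee}$ exists, completing your argument.

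For comparison, the paper sidesteps transversality altogether: it takes for $\Pi$ the specific hyperplane $X_{m-1}=\lambda X_m$ through the extra point $Q$ and exploits the stabilizer $H\cong{\rm{Sym}}_{m-2}$ of $X_{m-1},X_m$, which preserves both $\cC$ and $\Pi$. B\'ezout then forces the $H$-orbits of both a point of $\cC\cap\Pi$ and of $Q$ to be short, so $Q$ is fixed by a transposition, and Lemmas \ref{lem18oct} and \ref{lem11octC} place all such fixed points on $\cC$---a contradiction. This buys uniformity in the characteristic at the cost of being specific to this curve, whereas your route, once repaired, is a general-position argument. Finally, note that your opening observation already contains the shortest proof: by Lemma \ref{lem1} the Jacobian of the $m-2$ defining forms has rank $m-2$ at \emph{every} point of $V$, so every point of $V$ lies on a component of local dimension exactly $1$; hence $V$ has no isolated points at all, and $V=\cC$ follows at once from Lemma \ref{lem5oct} without any hyperplane being chosen.
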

\begin{proof} Suppose on the contrary the existence of a point $Q=(q_1:\cdots:q_{m-1}:q_{m})$ of $V$ which is not in $\cC$. Since $(1:0:0:\cdots:0)$ is not a point of $V$, both $q_{m}=1$ and $q_{m-1}=\lambda\neq 0$
may be assumed. Then $Q$ is contained in the hyperplane $\Pi$ of equation $X_{m-1}=\lambda X_{m}$. Choose a point $P\in \cC$ lying on $\Pi$. The stabilizer $H$ of $X_{m-1}$ and $X_{m}$ in $G$ preserves $\Pi$. Since
$H$ also preserves $\cC$, the $H$-orbit $\Delta$ of $P$ is contained in $\Pi$. As $P\in\Pi$, this together with Lemma \ref{lem8oct} yield  $|\Delta|<|H|$, that is, $H_P$ is nontrivial. As in the proof of Lemma
\ref{lem19oct}, from Lemma \ref{lem18oct} and Lemma \ref{lem11octC} we obtain $|H_P|=2$. Therefore, $|\Delta|=\ha (m-2)!$. Thus, the higher dimensional generalization of B\'ezout's theorem yields that the length of the
$H$-orbit of $Q$ is also less than $
(m-2)!$, that is, $H_Q$ is non-trivial. This is a contradiction, since Lemma \ref{lem18oct} together with Lemma \ref{lem11octC}, yields that all the fixed points of any transposition of $G$ are on $\cC$.\end{proof}
Lemmas \ref{lem1} and \ref{lem12oct} have the following corollary.
\begin{theorem}
\label{the2502} $V$ is an irreducible non-singular curve of ${\rm{PG}}(m-2,\mathbb{K})$.
\end{theorem}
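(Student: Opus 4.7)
The plan is to observe that this theorem is a direct corollary assembled from the four preceding lemmas, with no further computation required. First I would invoke Lemma \ref{lem12oct}, which establishes $V=\cC$, hence that $V$ is absolutely irreducible. Next, nonsingularity of $V$ is already supplied by Lemma \ref{lem1}, obtained from an analysis of the Vandermonde-type minors of the Jacobian matrix together with the combinatorial constraints on coordinates recorded in Lemma \ref{lem11octB}. Then Lemma \ref{lem1D}, based on the codimension-count of \cite{sha} applied to the hyperplane intersection $X_{m-1}=X_m=0$ (which is disjoint from $V$ by Lemma \ref{lem11octA}), forces $\dim V=1$, so that $V$ is genuinely a curve rather than a higher-dimensional variety.

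It remains only to confirm the ambient space. For this I would simply note, as was pointed out in the introductory discussion of system (\ref{sy}), that the first equation $X_1+\cdots+X_m=0$ already implies that $V$ is contained in a hyperplane of $\textrm{PG}(m-1,\mathbb{K})$. Since this hyperplane is projectively isomorphic to $\textrm{PG}(m-2,\mathbb{K})$, the curve $V$ is naturally embedded in $\textrm{PG}(m-2,\mathbb{K})$, as required. Concatenating these four observations gives the statement.

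Since every ingredient is already in hand, there is no genuine obstacle at this stage: the real content of the theorem was carried out earlier, and the only task here is bookkeeping. If anything, the subtlest of the four ingredients is Lemma \ref{lem12oct} (irreducibility), whose proof combined the higher-dimensional B\'ezout bound with a careful orbit-length argument for the stabilizer $H$ of $X_{m-1},X_m$ acting on the hyperplane pencil through the codimension-$2$ subspace $\Sigma=\{X_{m-1}=X_m=0\}$, exploiting Lemmas \ref{lem18oct} and \ref{lem11octC} to control the fixed points of transpositions. The proof I would therefore write is essentially a one-sentence corollary citing Lemmas \ref{lem1D}, \ref{lem1}, \ref{lem12oct}, plus the remark that (\ref{sy}) embeds $V$ into $\textrm{PG}(m-2,\mathbb{K})$.
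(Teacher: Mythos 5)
Your proposal is correct and matches the paper exactly: the paper itself presents Theorem \ref{the2502} as an immediate corollary of Lemma \ref{lem1} (non-singularity) and Lemma \ref{lem12oct} (irreducibility, $V=\cC$), with dimension $1$ coming from Lemma \ref{lem1D} and the embedding in ${\rm{PG}}(m-2,\mathbb{K})$ from the first equation of (\ref{sy}). Your bookkeeping assembly is precisely the intended argument.
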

\section{The action of $G$ on $V$}
\label{secautgroup}
As we have already pointed out, $G$ has at least three short orbits on $V$, named $\Omega_\omega$, $\Omega_\varepsilon$ and $\Omega_\theta$. We prove that
they are the only short $G$-orbits.
\begin{lemma}\label{3orbite}
The short $G$-orbits on $V$ are exactly $\Omega_\omega$, $\Omega_\varepsilon$ and $\Omega_\theta$, and they have lengths $(m-1)!$, $m(m-2)!$ and $m!/2$.
\end{lemma}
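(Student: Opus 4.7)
The plan is to verify the three orbit lengths first and then show that no other short $G$-orbit exists. The lengths $|\Omega_\omega|=(m-1)!$ and $|\Omega_\varepsilon|=m(m-2)!$ are immediate from Lemmas \ref{lem10octAchar0} and \ref{lem7oct}. For $\Omega_\theta$, let $P\in V$ be a fixed point of a transposition $g=(X_lX_n)$. By Lemma \ref{lem11octB} the value $x_l=x_n$ is the only one appearing with multiplicity $>1$, and all remaining coordinates of $P$ are nonzero, pairwise distinct, and distinct from $x_l$. Any $h\in G_P$ then preserves this coordinate multiset up to a scalar $\lambda$; matching the unique doubly-repeated value forces $\lambda=1$, so $h$ is either the identity or $g$. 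Hence $|G_P|=2$ and $|\Omega_\theta|=m!/2$.

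For the converse, take a nontrivial $g\in G$ fixing $P=(x_1:\cdots:x_m)\in V$ and write $g\cdot P=cP$ with $c\in\mathbb{K}^*$. If $g$ fixes at least two coordinates, Lemma \ref{lem18oct} makes $g$ a transposition, so $P\in\Omega_\theta$. Suppose $g$ fixes exactly one coordinate, say $X_i$; then $cx_i=x_i$. The case $c=1$ is ruled out, because it forces every non-trivial cycle of $g$ to have equal coordinates, and Lemma \ref{lem11octB} then collapses $g$ to a single $2$-cycle, yielding $m-2\ge 3$ fixed coordinates, a contradiction. Hence $c\neq 1$ and $x_i=0$; by the $G$-action on coordinates we may take $i=m$, and Lemma \ref{lem8octU} then places $P\in V\cap\Pi_\infty\subseteq\Omega_\varepsilon$.

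The remaining case, where $g$ has no fixed coordinate, is the main obstacle. Once again $c\neq 1$, and setting $d:=\ord(c)\ge 2$ the cycle-closure condition gives $c^\ell=1$ for every cycle length $\ell$ of $g$, so $d\mid\ell$. If any $\ell>d$, the $d$ distinct values $c^j y$ appearing along that cycle would each be repeated at least twice on it, violating Lemma \ref{lem11octB}; thus every cycle has length exactly $d$ and $d\mid m$. Setting $t:=m/d\ge 2$, picking a representative $y_k$ of each cycle, and writing $z_k:=y_k^d$, the equations of (\ref{sy}) reduce (via the vanishing of $\sum_{j=0}^{d-1}c^{jr}$ for $d\nmid r$) to the power-sum system $\sum_{k=1}^{t} z_k^r=0$ for $r=1,\ldots,t-1$. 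Since $p>m\ge t$, Newton's identities yield $e_1(z)=\cdots=e_{t-1}(z)=0$, so the nonzero $z_k$ are the $t$ distinct $t$-th roots of a common nonzero value. Fixing a primitive $m$-th root of unity $\omega$ so that $\omega^d$ is the primitive $t$-th root of unity producing the $z_k$, a direct bookkeeping identifies the multiset of coordinates of $P$, up to a common scalar, with $\{\omega^l:0\le l<m\}$. Hence $P$ is a $G$-image of $P_\omega$ and lies in $\Omega_\omega$, completing the proof.
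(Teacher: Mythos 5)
Your proof is correct, but it takes a genuinely different route from the paper. The paper's own argument is a three-line ramification count: it already knows $2\gg-2=\ha((m-2)(m-3)-4)(m-2)!$ from Lemma \ref{lem11octE}, feeds the three known short orbits (with $|\Omega_\theta|\le \ha m!$ a priori) into the Hurwitz formula (\ref{hufo}) for the full tame group $G$, and observes that the resulting lower bound on $2\gg-2$ already equals the known value; equality then forces $|\Omega_\theta|=\ha m!$ and rules out any further short orbit in one stroke. You instead classify directly every point $P$ with nontrivial stabilizer by analysing the scalar $c$ in $g\cdot P=cP$ and the cycle structure of $g$, using Lemmas \ref{lem11octA}, \ref{lem11octB}, \ref{lem18oct}, \ref{lem8octU} and a reduction via Newton's identities to a shorter power-sum system. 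What this buys is independence from the genus computation (the paper's proof is hostage to Lemma \ref{lem11octE}) plus finer structural information: you see explicitly that any stabilizing element is a transposition, fixes the vanishing coordinate, or acts with all cycles of equal length $d$ and forces the coordinates to be the full set of $m$-th roots of a common value. Two small repairs are needed. First, your assertion $t:=m/d\ge 2$ is unjustified: $g$ may be an $m$-cycle with $\ord(c)=m$, i.e. $t=1$; but then the coordinates along the single cycle are $y,cy,\ldots,c^{m-1}y$ and $P\in\Omega_\omega$ immediately, so the case is harmless and should just be stated. Second, the ``direct bookkeeping'' deserves one sentence: every coordinate $c^jy_k$ satisfies $(c^jy_k)^m=z_k^t=:w\ne 0$, coordinates within a cycle are distinct since $\ord(c)=d$, and coordinates from different cycles have distinct $d$-th powers $z_k$, so the $m$ coordinates are exactly the $m$ distinct $m$-th roots of $w$, whence $P$ is a coordinate permutation of a scalar multiple of $P_\omega$.
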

\begin{proof}
Since $|\Omega_\omega|=(m-1)!$, $|\Omega_\varepsilon|=m(m-2)!$ and $\Omega_\theta$ are short $G$-orbits, and $|\Omega_\theta|\leq \ha m!$, the Hurwitz genus formula (\ref{hufo}) applied to the $G$ yields
\begin{equation*}
2\gg-2\geq -2m!+(m!-(m-1)!)+(m!-m(m-2)!)+(m!-\textstyle\frac{1}{2}m!).
\end{equation*}
Comparison with \eqref{eq11octB} shows that equality holds. Therefore, no further short $G$-orbits on $V$ exists, and $|\Omega_\theta|=\ha m!$, that is, the stabilizer  of a point on $V$ which is fixed
by a transposition contains no more nontrivial element of $G$.
\end{proof}

\begin{lemma}
\label{lem21jun21} The short $G$-orbit $\Omega_\omega$ consists of the common points of $V$ and the hypersurface $\Sigma_{m-1}$
of equation
\begin{equation}
\label{eq21jun21} X_1^{m-1}  +  X_2^{m-1}  +  \ldots+X_{m}^{m-1}=0.
\end{equation}
\end{lemma}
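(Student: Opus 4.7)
The plan is to prove the stated equality by mutual inclusion; the forward direction is immediate, while the reverse one reduces via Newton's identities to elementary facts about $m$-th roots of unity.

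For the inclusion $\Omega_\omega \subseteq V \cap \Sigma_{m-1}$, I first check that the representative $P_\omega = (\omega : \omega^2 : \cdots : \omega^m = 1)$ lies on $\Sigma_{m-1}$. Setting $\theta = \omega^{m-1}$, since $\gcd(m-1,m) = 1$ the element $\theta$ is again a primitive $m$-th root of unity, hence
\[
\sum_{i=1}^{m}(\omega^i)^{m-1} = \sum_{i=1}^{m}\theta^i = \theta\cdot\frac{\theta^m-1}{\theta-1} = 0.
\]
Because coordinate permutations preserve both $V$ and the symmetric hypersurface $\Sigma_{m-1}$, the whole orbit $\Omega_\omega$ is contained in $V \cap \Sigma_{m-1}$.

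For the reverse inclusion, let $P = (x_1 : \cdots : x_m)$ be any point of $V \cap \Sigma_{m-1}$, and denote by $p_k = \sum_{i=1}^m x_i^k$ the power sums and by $e_k$ the elementary symmetric functions of $x_1, \ldots, x_m$. By the defining equations of $V$ together with \eqref{eq21jun21} we have $p_k = 0$ for $1 \le k \le m-1$. Newton's identities
\[
k\,e_k = \sum_{i=1}^{k}(-1)^{i-1}\,e_{k-i}\,p_i \qquad (1 \le k \le m-1)
\]
then yield inductively $e_1 = e_2 = \cdots = e_{m-1} = 0$. The main obstacle is ensuring that the coefficient $k$ is invertible in $\mathbb{K}$; this is exactly the role of the standing hypothesis $m \le p-1$, which guarantees $1 \le k \le m-1 \le p-2$ and hence $k \ne 0$ in $\mathbb{K}$.

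From the vanishing of $e_1, \ldots, e_{m-1}$ it follows that $\prod_{i=1}^{m}(t - x_i) = t^m + (-1)^m e_m = t^m - c$ with $c = (-1)^{m+1} e_m$. If $c = 0$ then every $x_i$ vanishes, contradicting that $P$ is a projective point, so $c \ne 0$. Since $\mathrm{char}(\mathbb{K}) \nmid m$, the polynomial $t^m - c$ has $m$ distinct roots in $\mathbb{K}$, which must coincide with $x_1, \ldots, x_m$. Picking $\zeta \in \mathbb{K}$ with $\zeta^m = c$, we get $\{x_1, \ldots, x_m\} = \{\zeta\omega, \zeta\omega^2, \ldots, \zeta\omega^m\}$, so $P$ differs from $P_\omega$ only by a scalar factor and a coordinate permutation. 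Hence $P \in \Omega_\omega$, completing the proof.
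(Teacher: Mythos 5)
Your proof is correct, but it takes a genuinely different route from the paper. The paper's argument is global and combinatorial: it notes that $\Omega_\omega\subset\Sigma_{m-1}$, checks that $\Omega_\varepsilon\cap\Sigma_{m-1}=\emptyset$, bounds $|V\cap\Sigma_{m-1}|\le(m-1)(m-2)!=(m-1)!$ by the higher-dimensional B\'ezout theorem, and then invokes the $G$-invariance of $V\cap\Sigma_{m-1}$ together with the classification of the short $G$-orbits (Lemma \ref{3orbite}) to force $V\cap\Sigma_{m-1}=\Omega_\omega$. You instead argue pointwise via Newton's identities: from $p_1=\cdots=p_{m-1}=0$ you deduce $e_1=\cdots=e_{m-1}=0$ (the hypothesis $m\le p-1$ correctly guaranteeing the invertibility of $k$), so the coordinates are the roots of $t^m-c$ with $c\ne 0$, i.e.\ a common scalar times the $m$-th roots of unity, which is exactly a point of $\Omega_\omega$. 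Your approach is more elementary and self-contained --- it needs neither the degree computation, nor B\'ezout, nor the orbit classification --- and it in fact directly establishes the ``regular sequence'' statement that the paper only extracts from this lemma in the introduction. What the paper's method buys in exchange is uniformity: the same counting template transfers verbatim (``a similar argument'') to Lemmas \ref{lem21jun21bis} and \ref{lem21jun21ter}, where the exponents are no longer the consecutive integers $1,\dots,m-1$ and a direct Newton-identity computation would require separate (though still feasible) adaptations.
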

\begin{proof} We observe first that no point of the $G$-orbit $\Omega_\varepsilon$ is in $\Sigma_{m-1}$. In fact, if $P=(\xi_1:\ldots:\xi_{m-1}:0)\in \Omega_\varepsilon$ then $\xi_j^{m-1}=1$ and hence $\sum_{j=1}^{m-1}\xi_j^{m-1}=m-1\neq 0$, thus $P\not\in \Sigma_{m-1}$. On the other hand, it is readily seen that $P\in \Sigma_{m-1}$ for any $P\in \Omega_{\omega}$. From the higher dimensional generalization of B\'ezout's theorem, $|V\cap \Sigma_{m-1}|\le (m-1)!$. Furthermore, since $G$ also preserves $\Sigma_{m-1}$, the intersection $V\cap \Sigma_{m-1}$ is $G$-invariant, as well. Therefore, Lemma \ref{3orbite} yields $V\cap \Sigma_{m-1}=\Omega_\omega$.
\end{proof}
A similar argument can be used to prove the following lemmas.
\begin{lemma}
\label{lem21jun21bis} The short $G$-orbit $\Omega_\varepsilon$ consists of the common points of $V$ and the hypersurface $\Sigma_{m}$
of equation
\begin{equation}
\label{eq21jun21bis} X_1^{m}  +  X_2^{m}  +  \ldots+X_{m}^{m}=0.
\end{equation}
\end{lemma}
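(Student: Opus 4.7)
The plan is to mimic the proof of Lemma \ref{lem21jun21}, exploiting that the defining form $X_1^m+\cdots+X_m^m$ of $\Sigma_m$ is symmetric. Consequently $G=\Sym_m$ preserves $\Sigma_m$, so $V\cap \Sigma_m$ is a $G$-invariant subset of $V$ and hence, by Lemma \ref{3orbite}, a union of $G$-orbits among $\Omega_\omega$, $\Omega_\varepsilon$, $\Omega_\theta$, and possibly regular orbits of length $m!$.

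The central computation is to verify $\Omega_\varepsilon\subseteq \Sigma_m$. With $\varepsilon$ a primitive $(m-1)$-th root of unity and representative $P_\varepsilon=(\varepsilon:\varepsilon^2:\cdots:\varepsilon^{m-1}=1:0)$, the defining form of $\Sigma_m$ evaluated at $P_\varepsilon$ is
\[
\sum_{j=1}^{m-1}\varepsilon^{jm}=\sum_{j=1}^{m-1}(\varepsilon^{m-1})^j\varepsilon^j=\sum_{j=1}^{m-1}\varepsilon^j=0,
\]
the last equality holding because $\varepsilon$ is a primitive $(m-1)$-th root of unity, so $1+\varepsilon+\cdots+\varepsilon^{m-2}=0$ and thus $\varepsilon+\cdots+\varepsilon^{m-1}=-1+\varepsilon^{m-1}=0$. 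By $G$-invariance the whole orbit $\Omega_\varepsilon$ lies in $V\cap \Sigma_m$.

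To finish, I would invoke the higher-dimensional B\'ezout theorem applied to the $m-2$ hypersurfaces of system (\ref{sy}) together with $\Sigma_m$: since these have degrees $1,2,\ldots,m-2$ and $m$ respectively, one obtains $|V\cap \Sigma_m|\leq (m-2)!\cdot m=m(m-2)!=|\Omega_\varepsilon|$. Combined with the previous inclusion this forces the equality $V\cap\Sigma_m=\Omega_\varepsilon$. In parallel with the opening remark of the proof of Lemma \ref{lem21jun21}, one may also sanity-check that $\Omega_\omega\not\subseteq \Sigma_m$, since at $P_\omega=(\omega:\omega^2:\cdots:\omega^m=1)$ the form evaluates to $\sum_{j=1}^m\omega^{jm}=m\neq 0$ by the hypothesis $m\leq p-1$.

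I do not expect any genuine obstacle: the proof is structurally identical to that of Lemma \ref{lem21jun21}, with the only subtle points being the identity $\varepsilon^m=\varepsilon$ (which collapses the power sum to a sum of nontrivial $(m-1)$-th roots of unity) and the characteristic assumption $m\leq p-1$ ensuring $m\neq 0$ in $\mathbb{K}$.
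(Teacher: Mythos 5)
Your proof is correct and is exactly the argument the paper intends: the paper proves Lemma \ref{lem21jun21} this way and then states that ``a similar argument'' gives Lemma \ref{lem21jun21bis}. The key computations ($\varepsilon^{jm}=\varepsilon^{j}$ collapsing the power sum to $0$, the evaluation $m\neq 0$ at $P_\omega$ guaranteeing finiteness of $V\cap\Sigma_m$, and the B\'ezout bound $m(m-2)!$ matching $|\Omega_\varepsilon|$) are all right.
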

\begin{lemma}
\label{lem21jun21ter} The short $G$-orbit $\Omega_\theta$ consists of the common points of $V$ and the hypersurface $\Sigma_{m(m-1)/2}$
of equation
\begin{equation}
\label{eq21jun21ter} X_1^{m(m-1)/2}  +  X_2^{m(m-1)/2}  +  \ldots+X_{m}^{m(m-1)/2}=0.
\end{equation}
\end{lemma}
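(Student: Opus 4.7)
The plan is to mirror the structure of the proofs of Lemmas~\ref{lem21jun21} and~\ref{lem21jun21bis}: first verify the containment $\Omega_\theta \subset \Sigma_{m(m-1)/2}$, then exhibit a point of $V$ outside $\Sigma_{m(m-1)/2}$, and finally combine the higher dimensional B\'ezout bound with the $G$-invariance of the intersection and Lemma~\ref{3orbite}. The B\'ezout count matches exactly: $\deg(V)\cdot\deg(\Sigma_{m(m-1)/2})=(m-2)!\cdot \tfrac{m(m-1)}{2}=\tfrac{m!}{2}=|\Omega_\theta|$, so once the containment is proved, the $G$-invariant intersection $V\cap \Sigma_{m(m-1)/2}$ has at most $\tfrac{m!}{2}$ points, and Lemma~\ref{3orbite} forces it to coincide with $\Omega_\theta$ (no other $G$-orbit on $V$ has size at most $m!/2$ while meeting the intersection).

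For the containment, I would pick a representative of $\Omega_\theta$ fixed by the transposition $(X_{m-1}X_m)$. By Lemmas~\ref{lem18oct} and~\ref{lem11octB}, such a point has the shape $P=(x_1:\cdots:x_{m-2}:\alpha:\alpha)$ with $\alpha\ne 0$ and $x_1,\ldots,x_{m-2},\alpha$ pairwise distinct. Since $P\in V$, the conditions $p_1=\cdots=p_{m-2}=0$ combined with Newton's identities give $e_1=\cdots=e_{m-2}=0$, so the coordinates of $P$ are the roots of the trinomial $f(t)=t^m+(-1)^{m-1}e_{m-1}t+(-1)^m e_m$. Imposing the double-root condition $f(\alpha)=f'(\alpha)=0$ yields the explicit formulas $e_{m-1}=(-1)^m m\alpha^{m-1}$ and $e_m=(-1)^m(m-1)\alpha^m$. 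Substituting into the Newton recurrence $p_n=(-1)^m e_{m-1}p_{n-m+1}+(-1)^{m+1}e_m p_{n-m}$ valid for $n\ge m$, and normalising $\alpha=1$ by homogeneity, reduces the vanishing of $p_{m(m-1)/2}$ to a closed-form evaluation of a linear recurrence whose characteristic polynomial is $t^m-mt+(m-1)$; note that $t=1$ is a double root of this polynomial, since the derivative $mt^{m-1}-m$ also vanishes at $t=1$.

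To show $V\not\subset \Sigma_{m(m-1)/2}$ I would evaluate $p_{m(m-1)/2}$ at $P_\omega$ and $P_\varepsilon$ by reducing the exponent $i\cdot m(m-1)/2$ modulo $m$ and $m-1$ respectively; a short parity analysis then shows that in any admissible characteristic at least one of these values is nonzero, providing the required witness. The main obstacle is the closed-form step in the previous paragraph: the double-root structure of $t^m-mt+(m-1)$ produces a general-solution contribution of the form $(A+Bn)\cdot 1^n$ in addition to terms coming from the roots of the cofactor $(t^m-mt+(m-1))/(t-1)^2$, and isolating the cancellation exactly at the exponent $n=m(m-1)/2$ is the heart of the argument. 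A possible alternative route is to re-express $p_{m(m-1)/2}$ on $\Omega_\theta$ directly in terms of the discriminant of $f(t)$, exploiting the fact that $\Omega_\theta$ is precisely the locus on $V$ where this discriminant vanishes.
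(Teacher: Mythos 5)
Your overall architecture (containment of $\Omega_\theta$ in the hypersurface, a witness point of $V$ off it, then B\'ezout combined with $G$-invariance and Lemma~\ref{3orbite}) is exactly the ``similar argument'' the paper gestures at, and both the degree count $(m-2)!\cdot\tfrac{m(m-1)}{2}=\tfrac{m!}{2}$ and the parity analysis of $P_\omega$ and $P_\varepsilon$ are fine. The genuine gap is the containment $\Omega_\theta\subset\Sigma_{m(m-1)/2}$, which you explicitly leave open (``the heart of the argument''), and it cannot be closed: carrying out the very recurrence you set up shows that this step fails. For $m=5$, normalising $\alpha=1$, a point of $\Omega_\theta$ fixed by $(X_4X_5)$ has coordinates the roots of $t^5-5t+4=(t-1)^2(t^3+2t^2+3t+4)$, so $e_1=e_2=e_3=0$, $e_4=-5$, $e_5=-4$; Newton's identities give $p_1=p_2=p_3=0$, $p_4=20$, $p_5=-20$, and the recurrence $p_k=5p_{k-4}-4p_{k-5}$ yields $p_6=p_7=0$, $p_8=100$, $p_9=-180$, $p_{10}=80$. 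Since $80=2^4\cdot 5$ is nonzero in characteristic $0$ and in every characteristic $p\ge 7$, the point does \emph{not} lie on $\Sigma_{10}$. The same computation for $m=6$ (with $t^6-6t+5$) gives $p_{15}=-1080\neq 0$. So $\Omega_\theta\not\subset\Sigma_{m(m-1)/2}$, and the analogy with Lemmas~\ref{lem21jun21} and~\ref{lem21jun21bis} breaks down precisely because the coordinates of a point of $\Omega_\theta$ are not roots of unity.

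You should also be aware that the paper supplies no actual proof here (only the sentence ``a similar argument can be used''), and the computation above indicates that the defect lies in the statement itself rather than in your strategy: the degree-$\tfrac{m(m-1)}{2}$ hypersurface that genuinely cuts $\Omega_\theta$ out of $V$ is the discriminant locus $\prod_{i<j}(X_i-X_j)=0$ (the locus where two coordinates coincide, which by Lemmas~\ref{lem18oct} and~\ref{lem11octB} meets $V$ exactly in $\Omega_\theta$), and this is not a diagonal power-sum hypersurface. Your closing remark about re-expressing things through the discriminant of $f$ is therefore the right instinct, but it leads to a different hypersurface than the one in the statement, not to a proof of \eqref{eq21jun21ter}. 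If you need this lemma downstream, the safe substitute is: $\Omega_\theta$ is the intersection of $V$ with the hypersurface $\prod_{i<j}(X_i-X_j)=0$, with the same B\'ezout count.
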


\begin{lemma}
\label{le8mar} Let $P_\omega$ be the point of $V$ given in Lemma \ref{lem10octAchar0}. Then the number of fixed points on $V$ of the involution in the stabilizer of $P_\omega$ in $G$ is
\begin{equation}\label{eq8mar}
2^{m/2}\frac{(m/2)!}{m},
\end{equation}
if $m$ is even, and
\begin{equation}\label{eq8mar1}
2^{(m-1)/2}\frac{((m-1)/2)!}{m},
\end{equation}
if $m$ is odd.

\end{lemma}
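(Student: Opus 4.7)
The plan is a combinatorial count built on the $G$-orbit structure on $V$ supplied by Lemma \ref{3orbite}, combined with the cycle shape of the generator of $G_{P_\omega}$. By Lemma \ref{lem10octAchar0}, $G_{P_\omega}=\langle\sigma\rangle$ is cyclic of order $m$, with $\sigma$ acting on the coordinates as an $m$-cycle; for $m=2k$ even, the unique involution of $G_{P_\omega}$ is $\tau=\sigma^{k}$, a fixed-point-free involution on $\{X_1,\ldots,X_m\}$ consisting of $k$ disjoint transpositions. The first step is to localize the fixed-point locus inside $\Omega_\omega$. By Lemma \ref{3orbite}, the only nontrivial point-stabilizers for the $G$-action on $V$ are cyclic of orders $m$, $m-1$, and $2$ (the last generated by a single transposition), corresponding to the three short orbits $\Omega_\omega,\Omega_\varepsilon,\Omega_\theta$. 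Since $\tau$ is a product of $k\ge 2$ disjoint transpositions it is not a single transposition, ruling out $\Omega_\theta$; since it has order $2$ while $m-1$ is odd, it cannot lie in any cyclic group of order $m-1$, ruling out $\Omega_\varepsilon$. Hence $\mathrm{Fix}(\tau)\cap V\subseteq\Omega_\omega$.

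A point $Q\in\Omega_\omega$ is fixed by $\tau$ iff $\tau\in G_Q$, and since $G_Q$ is cyclic of order $m$ with a unique involution, this amounts to saying that $\tau$ \emph{is} that involution. The normalizer in $G=\Sym_m$ of any cyclic subgroup $C$ generated by an $m$-cycle has order $m\phi(m)$ (its centralizer equals $C$ itself and conjugation surjects onto $\mathrm{Aut}(C)$), so orbit--stabilizer applied to the transitive $G$-action on $\Omega_\omega$ shows that each such $C$ is the pointwise stabilizer of exactly $\phi(m)$ points of $\Omega_\omega$. Writing $N(\tau)$ for the number of cyclic subgroups of order $m$ in $G$ whose involution equals $\tau$, we obtain $|\mathrm{Fix}(\tau)\cap V|=\phi(m)\cdot N(\tau)$.

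The combinatorial heart is to compute $N(\tau)$ by enumerating $m$-cycles $\rho=(c_1,\ldots,c_m)$ with $\rho^{k}=\tau$. The relation forces $\{c_i,c_{i+k}\}$ to be the $i$-th pair of $\tau$ for each $i=1,\ldots,k$. Fixing $c_1$ to break cyclic symmetry determines $c_{k+1}$ as the $\tau$-partner of $c_1$; then $c_2$ can be chosen in $2(k-1)$ ways from the remaining elements (with $c_{k+2}$ forced), $c_3$ in $2(k-2)$ ways, and so on, giving $\prod_{i=1}^{k-1}2i=2^{k-1}(k-1)!$ such cycles. Since $m=2k$ forces every exponent coprime to $m$ to be odd, all $\phi(m)$ generators of a given cyclic subgroup of order $m$ share the same $k$-th power, so $N(\tau)=2^{k-1}(k-1)!/\phi(m)$. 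Assembling everything,
\[
|\mathrm{Fix}(\tau)\cap V|=\phi(m)\cdot\frac{2^{k-1}(k-1)!}{\phi(m)}=2^{k-1}(k-1)!=\frac{2^{m/2}(m/2)!}{m},
\]
establishing \eqref{eq8mar}. The odd case \eqref{eq8mar1} should then follow from an entirely parallel count, with the involution arising from the cyclic stabilizer (of even order $m-1$) of a point of $\Omega_\varepsilon$. The step I expect to be the main obstacle is the localization: carefully ruling out accidental fixed points outside the expected short orbit requires matching the precise cycle type of the involution under consideration against each of the stabilizer types catalogued in Lemmas \ref{lem18oct} and \ref{3orbite}.
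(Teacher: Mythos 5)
Your argument for even $m$ is correct and reaches the right count; it follows the same overall strategy as the paper (localize the fixed points to $\Omega_\omega$ using Lemma \ref{3orbite} and the fact that each point stabilizer is cyclic and hence contains at most one involution, then double count over $\Omega_\omega$), but the combinatorial core differs. Where you enumerate the $m$-cycles $\rho$ with $\rho^{m/2}=\tau$, divide by $\phi(m)$ to count cyclic subgroups, and then multiply back by $\phi(m)$ points per subgroup, the paper simply computes $|C_G(u)|=2^{m/2}(m/2)!$, deduces that $u$ has $m!/(2^{m/2}(m/2)!)$ conjugates, notes that distinct conjugates have disjoint fixed-point sets while every point of $\Omega_\omega$ is fixed by exactly one of them, and divides $|\Omega_\omega|=(m-1)!$ by the number of conjugates. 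Your normalizer-and-$\phi(m)$ bookkeeping is correct but is an avoidable detour, since (as you observe) the $\phi(m)$ cancels at the end.

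The genuine gap is the odd case, which you defer to ``an entirely parallel count.'' First, as you implicitly notice, for odd $m$ the stabilizer of $P_\omega$ is cyclic of odd order $m$ and contains no involution, so the statement must be reinterpreted, presumably as referring to the involution in the order-$(m-1)$ stabilizer of $P_\varepsilon$. But if you actually run the parallel count for that involution --- a product of $(m-1)/2$ disjoint transpositions, all of whose fixed points lie in $\Omega_\varepsilon$, which has length $m(m-2)!=m!/(m-1)$ --- you obtain
$2^{(m-1)/2}\,((m-1)/2)!/(m-1)$, with denominator $m-1$ rather than $m$. Indeed \eqref{eq8mar1} as printed is not an integer for $m=5$ (it equals $8/5$) or $m=7$ (it equals $48/7$), so it cannot be the cardinality of a fixed-point set; the paper's own proof is silent here, saying only that ``the same approach can be applied.'' So your deferral conceals a real discrepancy: the parallel count does not produce the displayed formula, and you should either have carried it out and flagged the mismatch or restricted your proof to even $m$.
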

\begin{proof} We will perform the proof for $m$ even. The same approach can be applied for $m$ being odd. Let $u$ be the (unique) involution in $G$ which fixes $P_\omega$. Then $u$ acts on $(X_1,\ldots,X_{m})$ as the involutory permutation $(X_1X_{(m+2)/2})(X_2X_{(m+4)/2})\cdots (X_{m/2}X_{m})$.
Therefore,
the centralizer $C_G(u)$ of $u$ in $G$ has order  $2^{m/2}(m/2)!$, and hence $u$ has as many as
$$k=\frac{m!}{2^{m/2}(m/2)!}$$
conjugate in $G$. We show that if $v$ is conjugate of $u$ in $G$ and $u\neq v$ then $u$ and $v$ has no common fixed point. Assume on the contrary the existence of a point $Q\in V$ such that $u(Q)=v(Q)$. Then $u$ and $v$
are two distinct involutions in the stabilizer $G_Q$ of $Q$ in $G$. On the other hand, since either $p=0$ or $p>0$ and $p\nmid |G|$, $G_Q$ is cyclic, and hence it contains at most one involution; a contradiction. Therefore, each point in the
$G$-orbit $\Omega_\omega$ is the fixed point of exactly one involution which is conjugate to $u$ in $G$. If $N_u$ counts the fixed points of $u$ in $\Omega_\omega$,
this yields that $|\Omega_\omega|=k N_u$. Therefore, the number of fixed points $u$ in $\Omega_\omega$ equals (\ref{eq8mar}). It remains to show that $u$ has no further fixed points on $V$. By Lemma \ref{lem7oct} the
stabilizer of any point $Q\in \Omega_\varepsilon$ in $G$ has odd order and hence contains no involution. By Lemma \ref{3orbite}, the 1-point stabilizer of the remaining short orbit has order $2$ and its non-trivial
element is a transposition. Since $u$ is not a transposition, the claim follows.
\end{proof}
\begin{theorem}
\label{th26062024B} $G$ is an irreducible group in $PG(m-2,\mathbb{K})$.
\end{theorem}
\begin{proof} Assume on the contrary that $G$ preserves a projective subspace $\Pi$ of $PG(m-2,\mathbb{K})$. Any homology $g\in G$ induces a projectivity $\bar{g}$ of $\Pi$ where $\bar{g}$ is either the identity or a homology of $\Pi$ and in the latter case $g$ and $\bar{g}$ have the same center. From Lemma \ref{lem18oct}, the transpositions in $G$ are involutory homologies in $G$, and hence such involutory homologies form a conjugacy class $\cD$ in $G$. Therefore, two cases occur, namely either each involutory homology in $\cD$ fixes $\Pi$ pointwise, or the center of each involutory homology lies in $\Pi$. From the proof of Lemma \ref{lem18oct}, in the former case $\Pi$ is contained in any hyperplane $\Pi_{ij}$ of equation $X_i=X_j$ with $1\le i < j \le m$ but these hyperplanes have no common point, a contradiction; in the latter case, $\Pi$ contains all points  $C_{ij}=(0:0:\cdots:-1:\cdots :0: \cdots :1:\cdots :0)$ with $1\le i < j \le m$, but these points generate $PG(m-2,\mathbb{K})$, a contradiction.
\end{proof}

These results on the action of $G$ also allow to show that every solution of {\rm{(\ref{sy})}} satisfies further diagonal equations.

\begin{proposition}
\label{prop10.03} Every solution of {\rm{(\ref{sy})}} also satisfies the following equations.
\begin{equation}
\label{syplus}
\left\{
\begin{array}{lll}
X_1^{m+2}  +  X_2^{m+2}  +  \ldots+X_{m}^{m+2}=0,\\
X_1^{m+3}  +  X_2^{m+3}  +  \ldots+X_{m}^{m+3}=0,\\
\cdots\cdots\\
\cdots\cdots\\
X_1^{2m-3}  +  X_2^{2m-3}  +  \ldots+X_{m}^{2m-3}=0.
\end{array}
\right.
\end{equation}
\end{proposition}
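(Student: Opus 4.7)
The plan is to use the Newton--Girard identities to translate the vanishing of low power sums into the vanishing of low elementary symmetric functions, and then to run the Newton recurrence forward.

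Fix a solution $(x_1,\ldots,x_m)$ of (\ref{sy}) and set
$$p_k=\sum_{i=1}^{m}x_i^k, \qquad e_k=e_k(x_1,\ldots,x_m)$$
so that by hypothesis $p_1=p_2=\cdots=p_{m-2}=0$. The goal is $p_k=0$ for $m+2\le k\le 2m-3$.

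First I would exploit the form of Newton's identity valid for $1\le k\le m$,
$$k\,e_k=\sum_{i=1}^{k}(-1)^{i-1}p_i\,e_{k-i},$$
and induct on $k$. Because each $p_1,\ldots,p_{k}$ vanishes as long as $k\le m-2$, the right-hand side is $0$, and the assumption $m\le p-1$ in positive characteristic ensures that every coefficient $k\in\{1,\ldots,m-2\}$ is a unit in $\mathbb{K}$. Hence $e_1=e_2=\cdots=e_{m-2}=0$. (Note that $e_{m-1}$ and $e_m$ remain undetermined, in agreement with Lemmas \ref{lem21jun21} and \ref{lem21jun21bis}, which show $p_{m-1}$ and $p_m$ need not vanish.)

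Next I would apply Newton's identity in its recursive form valid for $k>m$:
$$p_k=\sum_{i=1}^{m}(-1)^{i-1}e_i\,p_{k-i}.$$
Since $e_1=\cdots=e_{m-2}=0$, this collapses to the two-term recurrence
$$p_k=(-1)^{m-2}\,e_{m-1}\,p_{k-m+1}+(-1)^{m-1}\,e_m\,p_{k-m}.$$
For $k$ in the range $m+2\le k\le 2m-3$ the indices shift into
$$k-m+1\in\{3,\ldots,m-2\},\qquad k-m\in\{2,\ldots,m-3\},$$
both contained in $\{1,\ldots,m-2\}$. Consequently $p_{k-m+1}=p_{k-m}=0$, and the recurrence immediately yields $p_k=0$ for every $k$ in that range, proving (\ref{syplus}).

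There is no real obstacle; the only subtlety is checking the characteristic condition, namely that the integers $k=1,2,\ldots,m-2$ arising as denominators in Newton's identities are all non-zero in $\mathbb{K}$, which is exactly guaranteed by the standing hypothesis $m\le p-1$. A by-product of the argument is that in fact $p_{m+1}=0$ as well (obtained from the recurrence at $k=m+1$, which involves $p_2$ and $p_1$), so the conclusion is even slightly stronger than stated.
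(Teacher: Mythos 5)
Your proof is correct, but it takes a genuinely different route from the paper's. The paper argues geometrically: it first verifies the identities on the short orbit $\Omega_\omega$, then assumes the hypersurface $\cH_j:\ X_1^{m+1+j}+\cdots+X_m^{m+1+j}=0$ does not contain $V$ and derives a contradiction from B\'ezout's theorem, using the classification of the short $G$-orbits (Lemma \ref{3orbite}) to show that $V\cap\cH_j$ would then have to contain at least $(2m-1)(m-2)!$ points while the intersection divisor has degree only $(m+1+j)(m-2)!$. Your Newton--Girard argument is more elementary and more self-contained: it needs neither the irreducibility of $V$ nor the orbit structure, it applies literally to every solution of the system (which is what the statement asserts), and the only characteristic-sensitive step --- inverting $k$ for $1\le k\le m-2$ --- is exactly covered by the standing hypothesis $m\le p-1$. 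Your index bookkeeping is right: for $m+2\le k\le 2m-3$ the two surviving terms of the collapsed recurrence involve $p_{k-m+1}$ with $k-m+1\in\{3,\dots,m-2\}$ and $p_{k-m}$ with $k-m\in\{2,\dots,m-3\}$, both of which vanish by hypothesis. The by-product $p_{m+1}=0$ is also correct, and consistent with Lemmas \ref{lem21jun21} and \ref{lem21jun21bis}, which show that $p_{m-1}$ and $p_{m}$ are precisely the power sums cutting out the short orbits and hence cannot vanish identically; the paper's statement starts at exponent $m+2$ and is therefore slightly weaker on this point.
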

\begin{proof}
Let $P=(a_1,\ldots,a_{m})$ be any point of $V$. If $P\in \Omega_\omega$ then $a_i=a_i^{m+1}$ and hence $a_i^{m+1+l}=a_i^{1+l}$ for any positive integer $l$. Therefore, the claim holds for every $P\in \Omega_\omega$.

Let $\cH_j$ be the (irreducible) hypersurface of equation $X_1^{m+1+j}  +  X_2^{m+1+j}  +  \ldots+X_{m}^{m+1+j}=0$. Then $\Omega_\omega$ is contained in $\cH_j$. We prove that $\cH_j$ contains $V$ as far as $j\le m-4$. Assume
on the contrary that this does not occur for some $j$. Then Lemma \ref{lem8oct} together with B\'ezout's theorem applied to the intersection of $\cH_j$ with $V$ yield
\begin{equation}
\label{eqbez}
\sum_{Q\in \cH\cap V}I(Q,\cH\cap V)=(m+1+j)(m-2)!.
\end{equation}
We show that $\cH_j$ contains some points of $V$ other than those in $\Omega_\omega$. As $G$ also preserves $\cH_j$, the intersection number $I(Q,\cH_j\cap V)$ is invariant when $Q$ ranges over an $G$-orbit. For a point
$Q\in \Omega_\omega$, let $\lambda=I(P,\cH_j\cap V)$. Then
$\sum_{Q\in \Omega_\omega}I(Q,\cH_j\cap V)=\lambda (m-1)!$ whence $\lambda(m-1)\le m+1+j$. For $\lambda\ge 2$, this would yield $2(m-1)\le m+1+j$, that is, $j>m-4$, a contradiction. Therefore $\lambda=1$. Since
$(m+1+j)(m-2)!>(m-1)!$, the claim follows.

Let $R$ be a point $R\in \cH\cap V$ not in $\Omega_\omega$. From Lemma \ref{3orbite}, the $G$-orbit of $R$ has length at least $m(m-2)!$. Then $\cH_j$ and $V$ have at least $((m-1)+m)(m-2)!=(2m-1)(m-2)!$ common points.
Since $j\le m-3$ implies $2m-1>m+1+j$, this contradicts (\ref{eqbez}).
\end{proof}
\begin{lemma}\label{lambda12}
The group $G_{m,m-1,m-2}$ acts on $\Omega_\theta$ with $\lambda_1$ short orbits and $\lambda_2$ long orbits where
\begin{equation*}
\begin{split}
&\lambda_1=(m-2)(m-3)(m-4),\quad \lambda_2=3(m-2)^2.
\end{split}
\end{equation*}
\end{lemma}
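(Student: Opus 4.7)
The plan is to partition $\Omega_\theta$ according to which transposition of $G$ fixes each of its points, and then read off the $H$-orbit structure from the intersection of $H:=G_{m,m-1,m-2}$ with the order-$2$ point stabilizers. Since $H\cong\Sym_{m-3}$ has order $(m-3)!$, only two orbit lengths are possible, namely $(m-3)!$ and $(m-3)!/2$, so the problem reduces to deciding, transposition by transposition, which case occurs.

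First I would invoke Lemma \ref{3orbite}: every $P\in\Omega_\theta$ has stabilizer $G_P=\langle g_P\rangle$ for a \emph{unique} transposition $g_P\in G$, and by Lemma \ref{lem11octC} each transposition has exactly $(m-2)!$ fixed points on $\cC$. This gives a disjoint decomposition $\Omega_\theta=\bigsqcup_{g}F_g$ where $g$ ranges over the $\binom{m}{2}$ transpositions of $G$ and $F_g$ denotes the fixed-point set of $g$, with $|F_g|=(m-2)!$. For $P\in F_g$ the $H$-stabilizer equals $H\cap\langle g\rangle$, which is $\langle g\rangle$ when $g\in H$ (short $H$-orbit of length $(m-3)!/2$) and trivial otherwise (long $H$-orbit of length $(m-3)!$).

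Next I would split the transpositions of $G$: exactly the $\binom{m-3}{2}$ transpositions $(X_iX_j)$ with $\{i,j\}\subset\{1,\ldots,m-3\}$ lie in $H$, while the remaining $\binom{m}{2}-\binom{m-3}{2}=3(m-2)$ transpositions lie outside $H$. Dividing the respective total fixed-point counts by the appropriate orbit lengths gives
\begin{equation*}
\lambda_1=\frac{\binom{m-3}{2}(m-2)!}{(m-3)!/2}=(m-2)(m-3)(m-4),\qquad \lambda_2=\frac{3(m-2)(m-2)!}{(m-3)!}=3(m-2)^2,
\end{equation*}
and a sanity check $\lambda_1\cdot(m-3)!/2+\lambda_2\cdot(m-3)!=m!/2=|\Omega_\theta|$ confirms the bookkeeping.

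The only real obstacle is the uniqueness of $g_P$ that underpins the disjointness of the $F_g$ and the uniformity of the stabilizer size within each piece; fortunately this is already packaged into Lemma \ref{3orbite}, via the length computation $|\Omega_\theta|=m!/2$ forcing $|G_P|=2$. Once that is in hand, the rest is elementary orbit-counting with no geometric subtleties remaining.
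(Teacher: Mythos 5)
Your argument is correct, and it reaches the result by a genuinely different route from the paper. The paper's proof parametrizes the $G_{m,m-1,m-2}$-orbits directly: since the stabilizer permutes only the first $m-3$ coordinates while the full coordinate multiset of a point of $\Omega_\theta$ is fixed, each orbit corresponds to an ordered triple of values placed in the last three positions, and the case analysis is governed by how many entries of that triple equal the doubled value $t$ (zero occurrences give the $(m-2)(m-3)(m-4)$ short orbits, one or two occurrences give the $3(m-2)(m-3)+3(m-2)=3(m-2)^2$ long ones). You instead decompose $\Omega_\theta=\bigsqcup_g F_g$ over the $\binom{m}{2}$ transpositions, using Lemma \ref{3orbite} for the uniqueness of the fixing transposition and Lemma \ref{lem11octC} for $|F_g|=(m-2)!$, and then count orbits by dividing the two $H$-invariant pieces $\bigcup_{g\in H}F_g$ and $\bigcup_{g\notin H}F_g$ by the respective orbit lengths. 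Both computations are sound and pass the same consistency check against $|\Omega_\theta|=m!/2$. Your approach buys a cleaner group-theoretic bookkeeping that never touches coordinates, at the cost of leaning on the global fixed-point count of Lemma \ref{lem11octC} and on the (easily verified, but worth stating) fact that every fixed point of a transposition lies in $\Omega_\theta$, since the cyclic stabilizers of points of $\Omega_\omega$ and $\Omega_\varepsilon$ contain no transpositions; the paper's approach is more explicit and, as a by-product, identifies \emph{which} triples of terminal coordinates produce the short orbits, namely those avoiding the repeated value. One small caution about your phrasing: an individual $F_g$ with $g\in H$ is not $H$-invariant (conjugation by $H$ permutes these sets), so the orbit count must be performed on the union over all transpositions in $H$, exactly as your displayed formula does — the computation is right, but the phrase ``transposition by transposition'' should not be read as saying each $F_g$ splits into orbits on its own.
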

\begin{proof}
Fix a point $P\in\Omega_\theta$. As in the proof of Lemma \ref{lem11octC}, assume that $P=(x_1,\dots,x_{m})$ where $x_i=x_j=t$ for some $1\leq i<j\leq m$, $t\in\mathbb{K}$ and $x_k\neq x_l$ whenever
$(k,l)\neq(i,j)$. So, the coordinates $(x_1,\dots,x_{m})$ of $P$ are $m-1$ different values from $\mathbb{K}$. The points of $\Omega_\theta$ are those whose coordinates are a permutation of $x_1,\dots,x_{m}$, that is $Q\in\Omega_\theta$ if and only if $Q=(x_{\sigma(1)},\dots,x_{\sigma(m)})$ for a permutation
$\sigma \in G$.
Two points are in the same short $G_{m,m-1,m-2}$-orbit if and only if they share the last three coordinates, hence a short $G_{m,m-1,m-2}$-orbit arises every time we fix an ordered triple $(x_a,x_b,x_c)$, with $x_a,x_b,x_c\neq t$. This can be
done in $(m-2)(m-3)(m-4)$ different ways.
A long $G_{m,m-1,m-2}$-orbit arises every time we fix an ordered triple $(x_a,x_b,x_c)$, with either one or two of the values $x_a,x_b,x_c$ being equal to $t$. This can be done in $3(m-2)(m-3)$ and $3(m-2)$ different ways respectively.
Therefore $\lambda_1=(m-2)(m-3)(m-4)$ and $\lambda_2=3(m-2)^2$.
\end{proof}

\section{Quotient curves of $V$}
\label{secqc}
We have already determined the quotient curve of $V$ with respect to the subgroup of $G$ which fixes two given coordinates $X_i$ and $X_j$; see Lemma \ref{lem19oct}. In this section, we consider the more general case
where the subgroup $H_l$ of $G$ fixes $l\ge 3$ coordinates. Let $d=m-1-l$. W.l.o.g. these coordinates are assumed to be $X_{d+2},\ldots,X_{m}$.
The hyperplanes $\Pi_i: X_i=0$ with $i=d+2,\ldots,m$ meet in a $d$-dimensional subspace $\Sigma$ which is disjoint from $V$ by Proposition \ref{lem11octA}. Clearly, $H_l$ preserves $\Sigma$. Furthermore, the hyperplanes $\Pi_i: X_i=0$ with $i=1,\ldots,d+1$ meet in a $(m-d-2)$-dimensional subspace $\Sigma'$ disjoint from $\Sigma$. Clearly, $H_l$ fixes $\Sigma'$ pointwise.
Projecting $V$ from $\Sigma$ on $\Sigma'$ produces a curve $\bar{V}$ of $\Sigma'$ whose degree is equal to $(m-2)!/(m-l)!$.
\begin{proposition}
\label{pro5mar} Let $H_l$ be the stabilizer of $(X_{j_1},\ldots,X_{j_l})$ in $G$. If $l\ge 2$ then the quotient curve $V/H_l$ is isomorphic to $\bar{V}$.
\end{proposition}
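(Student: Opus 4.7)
The plan is to show that $\Sigma$ is an outer Galois subspace for $V$ with respect to $H_l$, in the sense recalled in Section \ref{back}: the surjective homomorphism $V/H_l \to \bar V$ constructed there will then automatically be bijective, hence an isomorphism of curves. Concretely, I need to verify two conditions: (a) every $\sigma \in H_l$ preserves each of the $(d+1)$-dimensional subspaces $\Lambda_P=\langle\Sigma,P\rangle$ for $P\in V$; and (b) $|\Lambda_P \cap V|=|H_l|=(d+1)!$ for all but finitely many $P\in V$.

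For (a), after assuming without loss of generality that $(j_1,\ldots,j_l)=(d+2,\ldots,m)$, the group $H_l$ permutes the coordinates $X_1,\ldots,X_{d+1}$ and fixes each of $X_{d+2},\ldots,X_m$. Thus $H_l$ preserves $\Sigma=\{X_{d+2}=\cdots=X_m=0\}$ as a set; moreover, for $P=(x_1:\cdots:x_m)\in V$, the point $\sigma(P)$ has the same last $l$ coordinates as $P$, so $\sigma(P)\in\Lambda_P$, and therefore $\sigma(\Lambda_P)=\Lambda_P$.

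For (b), a point $Q\in\Lambda_P\setminus\Sigma$ can, after rescaling, be written uniquely as $Q=(y_1:\cdots:y_{d+1}:x_{d+2}:\cdots:x_m)$; i.e.\ its last $l$ coordinates coincide with those of $P$. If $Q\in V$, I would invoke Lemma \ref{lem10giugno} with the $l\ge 2$ indices $d+2,\ldots,m$ to produce a permutation $\rho$ of $\{1,\ldots,m\}$ fixing these indices — hence an element of $H_l$ — such that $Q=\rho(P)$. This yields $\Lambda_P\cap V\subseteq H_l\cdot P$, while the reverse inclusion is immediate from (a) together with $\Sigma\cap V=\emptyset$. For generic $P\in V$, Lemma \ref{lem11octB} guarantees that the coordinates $x_1,\ldots,x_{d+1}$ are pairwise distinct, so the $H_l$-action on such a $P$ is free and $|H_l\cdot P|=(d+1)!$, giving (b).

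The main obstacle is (b), and the key technical tool is Lemma \ref{lem10giugno}, which rigidly characterizes points of $V$ sharing prescribed coordinates as coordinate-permutations of one another. Once (a) and (b) are in hand, the general framework of Section \ref{back} produces the desired isomorphism $V/H_l\cong \bar V$, and simultaneously shows that $\Sigma$ is in fact a $d$-dimensional outer Galois subspace for $V$ — a point worth recording for the higher-dimensional Galois-subspace discussion announced in the introduction.
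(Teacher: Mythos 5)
Your proposal is correct and follows essentially the same route as the paper: both arguments hinge on Lemma \ref{lem10giugno} to identify $\Lambda_P\cap V$ with the $H_l$-orbit of a generic point $P$, and on the count $(d+1)!$ for that orbit (using that a generic $P$ has trivial stabilizer in $H_l$, via Lemma \ref{lem11octB}) to conclude that $\Sigma$ is an outer Galois subspace and $V/H_l\cong\bar V$. Your write-up simply makes explicit the verification, left implicit in the paper, that $H_l$ preserves each subspace $\Lambda_P$ and that the rescaled coordinates of a point of $\Lambda_P\setminus\Sigma$ match those of $P$ in the last $l$ positions.
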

\begin{proof}
Take a point $P\in V$ such that no nontrivial element of $H_l$ fixes $P$. Then the $H_l$-orbit $\Omega$ of $P$ has length $(m-l)!=(d+1)!$. On the other hand,  Lemma \ref{lem10giugno} shows that $\Sigma$ and $P$ generate a $(d+1)$-dimensional subspace $\tilde{\Sigma}$ that cuts out on $V$ a set of $(d+1)!$ points. Since $\Omega$ is contained in $\tilde{\Sigma}$, it turns out that $\Omega=V\cap \Sigma'$ whence the claim follows.
\end{proof}
\begin{rem}
\label{rem11giugno} {\emph{Proposition \ref{pro5mar} shows that $\Sigma$ is an outer Galois subspace of $V$. }}
\end{rem}
\begin{proposition}
\label{pro11giugno} Let $\bar{\gg}$ be the genus of the quotient curve $V/H_l$ where $H_l$ is the stabilizer of $(X_{j_1},\ldots,X_{j_l})$. If $l\ge 2$ then
\begin{equation}
\label{eq11giugno} 2\bar{\gg}-2=  \frac{(m-2)(m-3)-4-(m-l)(m-1-l)}{2(m-l)!}(m-2)!
\end{equation}
\end{proposition}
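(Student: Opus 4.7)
The plan is to apply the Hurwitz genus formula (\ref{hufo}) to the Galois cover $V \to V/H_l$, combining it with the known expression for $\gg$ given by Lemma \ref{lem11octE}. Writing $d = m-1-l$, the subgroup $H_l$ is isomorphic to $\Sym_{d+1}$ acting as the symmetric group on the $d+1$ coordinates $X_1,\ldots,X_{d+1}$, so $|H_l| = (d+1)! = (m-l)!$. The assumption $m \le p-1$ in positive characteristic ensures $\gcd(|H_l|,p)=1$, so $H_l$ is tame and (\ref{hufo}) applies.

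The key step is to enumerate the short orbits of $H_l$ on $V$. By Lemma \ref{lem18oct}, any non-trivial element of $G$ fixing a point of $V$ together with at least two coordinates is a transposition; hence the only non-trivial elements of $H_l$ with fixed points on $V$ are the $\binom{d+1}{2}$ transpositions $(X_iX_j)$ with $1\le i<j\le d+1$. Lemma \ref{lem18oct} further gives that each such transposition has exactly $(m-2)!$ fixed points on $V$. By Lemma \ref{lem11octB}, no point of $V$ has two distinct pairs of equal coordinates, so (i) different transpositions $(X_iX_j)$ have disjoint sets of fixed points on $V$, and (ii) the stabilizer in $H_l$ of any such fixed point is cyclic of order $2$. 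The total number of points of $V$ lying on short $H_l$-orbits is therefore $\binom{d+1}{2}(m-2)!$, and they split into
\[
\frac{\binom{d+1}{2}(m-2)!}{(d+1)!/2} = \frac{(m-2)!}{(d-1)!}
\]
short orbits, each of length $(d+1)!/2$.

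Plugging these data into (\ref{hufo}) with $\ell = (d+1)!$, the ramification sum becomes
\[
\frac{(m-2)!}{(d-1)!}\cdot\frac{(d+1)!}{2} = \frac{d(d+1)}{2}\,(m-2)!,
\]
so Lemma \ref{lem11octE} yields
\[
\frac{1}{2}\bigl((m-2)(m-3)-4\bigr)(m-2)! = (d+1)!\,(2\bar{\gg}-2) + \frac{d(d+1)}{2}(m-2)!.
\]
Solving for $2\bar{\gg}-2$ and substituting $d+1 = m-l$, $d = m-1-l$ produces (\ref{eq11giugno}). The only non-routine step is the orbit count; its crux is the sharp restriction of Lemma \ref{lem11octB} on coincidences among coordinates of points of $V$, which both isolates each transposition's fixed locus and pins down the stabilizer orders. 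The final algebra is purely mechanical, with the boundary cases $l=2$ (giving $2\bar{\gg}-2=-2$, consistent with the rationality of the quotient in Lemma \ref{lem19oct}) and $l=m-1$ (trivial $H_l$, recovering $2\gg-2$) serving as immediate sanity checks.
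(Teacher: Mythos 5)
Your proof is correct and takes essentially the same route as the paper: count the $\binom{m-l}{2}$ transpositions in $H_l$, note that each fixes exactly $(m-2)!$ points of $V$ and that no other nontrivial element of $H_l$ has fixed points, then apply the tame Hurwitz formula together with Lemma \ref{lem11octE}. The only cosmetic difference is that you invoke Lemma \ref{lem18oct} directly for the ``only transpositions ramify'' step (the paper cites Lemmas \ref{lem10octAchar0}, \ref{lem7oct} and \ref{3orbite}) and you spell out the orbit lengths explicitly; both are valid and equivalent.
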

\begin{proof} The number of transpositions in $H_l$ is equal to $\ha(m-l)(m-1-l)$.  By Lemma \ref{lem11octC} each such transposition  has as many as $(m-2)!$ fixed points. From Lemmas \ref{lem10octAchar0}, \ref{lem7oct}, and \ref{3orbite}, no nontrivial element of $H_l$ other than its transpositions fixes a point of $V$. Therefore, the claim follows from the  Hurwitz genus formula (\ref{hufo}).
\end{proof}

\subsection{Quotient curve of $V$ by the $3$-coordinate stabilizer of $G$}
\label{sq} In this section $H=G_{m,m-1,m-2}$ is the stabilizer of $X_{m}, X_{m-1}, X_{m-2}$ in $G$, and  $\cX$ is the quotient curve of $V$ with respect to $H$. Then $\cX$ is an irreducible plane curve of degree $m-2$ whose genus equals $\ha(m^2-7m+12)$ by (\ref{eq11giugno}). Hence $\cX$ is non-singular. The following proposition shows that $\cX$ coincides with the curve introduced and investigated in \cite{voloch}.
\begin{theorem}
\label{the24jun} $\cX$ has homogeneous equation $G_{m-2}(x,y,z)=0$ where
\begin{equation}\label{eqvoloch}
G_{m-2}(x,y,z)=\sum_{i,j,k\geq0,i+j+k=m-2}x^iy^jz^k.
\end{equation}
\end{theorem}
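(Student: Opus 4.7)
The plan is to use Proposition \ref{pro5mar} to identify $\cX$ with the projection $\bar V$ of $V$ from the subspace $\Sigma:\,X_1=\cdots=X_{m-3}=0$ onto the plane $\Sigma'$ with projective coordinates $(x:y:z):=(X_{m-2}:X_{m-1}:X_m)$, and then to show that $G_{m-2}(x,y,z)$ vanishes on $\bar V$. Since $\bar V$ is irreducible of degree $(m-2)!/(m-3)!=m-2$ (as recorded in Section \ref{secqc}) and $G_{m-2}$ is homogeneous of degree $m-2$, the irreducible defining polynomial of $\bar V$ will then divide $G_{m-2}$, and equality of degrees will force the two to coincide up to a non-zero scalar.

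Fix $P=(x_1:\cdots:x_m)\in V$ and write $x:=x_{m-2}$, $y:=x_{m-1}$, $z:=x_m$. Since $m\leq p-1$, the integers $1,\ldots,m-2$ are invertible in $\mathbb{K}$, so Newton's identities applied to (\ref{sy}) give $e_n(x_1,\ldots,x_m)=0$ for $n=1,\ldots,m-2$. Therefore,
\[
\prod_{i=1}^m(1+x_it)=1+e_{m-1}t^{m-1}+e_mt^m,
\]
and dividing by $(1+xt)(1+yt)(1+zt)$ yields
\[
\prod_{i=1}^{m-3}(1+x_it)=\bigl(1+e_{m-1}t^{m-1}+e_mt^m\bigr)F(t),\qquad F(t):=\frac{1}{(1+xt)(1+yt)(1+zt)}.
\]
The left-hand side is a polynomial of degree $m-3$, so its coefficient of $t^{m-2}$ vanishes; on the right-hand side, the factor $1+e_{m-1}t^{m-1}+e_mt^m$ contributes only to powers of $t$ of degree $\geq m-1$, so the coefficient of $t^{m-2}$ there coincides with the coefficient of $t^{m-2}$ in $F(t)$. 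Hence the latter is zero.

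A partial-fraction decomposition of $F(t)$ together with geometric-series expansion gives, on the open set where $x,y,z$ are pairwise distinct,
\[
F(t)=\sum_{n\geq 0}(-1)^n\!\left(\frac{x^{n+2}}{(x-y)(x-z)}+\frac{y^{n+2}}{(y-x)(y-z)}+\frac{z^{n+2}}{(z-x)(z-y)}\right)\!t^n,
\]
and the classical symmetric-function identity
\[
\sum_{i+j+k=n}x^iy^jz^k=\frac{x^{n+2}}{(x-y)(x-z)}+\frac{y^{n+2}}{(y-x)(y-z)}+\frac{z^{n+2}}{(z-x)(z-y)}
\]
then identifies the coefficient of $t^n$ in $F(t)$ with $(-1)^nG_n(x,y,z)$. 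Both expressions are polynomials in $x,y,z$ and so agree everywhere. Specializing to $n=m-2$ yields $G_{m-2}(x,y,z)=0$ for every point of $\bar V$, which completes the argument. The main technical step is the partial-fraction computation of the coefficients of $F(t)$; the rest is direct bookkeeping with generating functions and Newton's identities.
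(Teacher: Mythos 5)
Your argument is correct, but it proves the theorem by a genuinely different route from the paper. The paper's proof is a point-counting argument: using Lemma \ref{lem10octAchar0} it exhibits $(m-1)(m-2)$ points $(\alpha:\beta:1)$ with $\alpha^m=\beta^m=1$ and $\alpha,\beta\neq 1$ lying on $\cX$, checks via the divided-difference formula \eqref{Gmeq2} quoted from \cite{voloch} that the curve $G_{m-2}=0$ passes through the same points, and concludes by B\'ezout since $(m-1)(m-2)>(m-2)^2$. You instead show directly that $G_{m-2}(x_{m-2},x_{m-1},x_m)$ vanishes identically on $V$: Newton's identities (legitimate here because $1,\ldots,m-2$ are units in $\mathbb{K}$, as $m\le p-1$) convert the vanishing of the first $m-2$ power sums into $e_1=\cdots=e_{m-2}=0$, and extracting the coefficient of $t^{m-2}$ from the resulting generating-function identity produces, up to sign, exactly the complete homogeneous symmetric polynomial $h_{m-2}(x,y,z)=G_{m-2}(x,y,z)$; irreducibility of $\bar V\cong\cX$ and the degree count $(m-2)!/(m-3)!=m-2$ recorded in Section \ref{secqc} then force equality of the two degree-$(m-2)$ curves. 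The two proofs ultimately rest on the same symmetric-function identity (your bialternant formula for $h_n$ is \eqref{Gmeq2} in disguise), but yours yields the stronger intermediate fact that $G_{m-2}$ vanishes on all of $V$ rather than merely on the projection of $\Omega_\omega$, and it connects the statement to the ``extra equations'' phenomenon of Proposition \ref{prop10.03}; the paper's proof is shorter because it imports the key identity from \cite{voloch} and only needs to test finitely many special points. One cosmetic slip: the centre of the projection is the subspace $X_{m-2}=X_{m-1}=X_m=0$ (the paper's $\Sigma$, disjoint from $V$ by Lemma \ref{lem11octA}), while $X_1=\cdots=X_{m-3}=0$ is the target plane $\Sigma'$; the map $(x_1:\cdots:x_m)\mapsto(x_{m-2}:x_{m-1}:x_m)$ that you actually use is the right one.
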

\begin{proof} Since $|H|=(m-3)!$, Lemma \ref{lem10octAchar0} yields that $\cX$ contains as many as $(m-1)(m-2)$ points $(\alpha:\beta: 1)$ with
$\alpha^{m}=\beta^{m}=1$ but $\alpha,\beta \ne 1$. Let $\tilde{\cX}$ be the plane curve with homogeneous equation $G_{m-2}(x,y,z)=0$. By  \cite[Equation (2)]{voloch}
\begin{equation}\label{Gmeq2}
G_{m-2}(x,y,z)=\frac{1}{x-y}\left(\frac{x^{m}-z^{m}}{x-z}-\frac{y^{m}-z^{m}}{y-z}\right).
\end{equation}
This shows that $\tilde{\cX}$ also contains each point $(\alpha:\beta: 1)$ with $\alpha^{m}=\beta^{m}=1$ but $\alpha,\beta \ne 1$. Therefore,
$\cX$ and $\tilde{\cX}$ have at least $(m-1)(m-2)$ pairwise distinct points. On the other hand, since $\mathcal{X}$ and $\tilde{\cX}$ both have degree $m-2$, B\'ezout's theorem applied to $\cX$ and $\tilde{\cX}$ yields that if $\cX$ and $\tilde{\cX}$  were distinct then they could  share at most $(m-2)^2$ points. Therefore, $\cX=\tilde{\cX}$.
\end{proof}
Theorem \ref{the24jun} shows that $\mathbb{K}(\cX)=\mathbb{K}(x,y)$ with $G_{m-2}(x,y,1)=0$. From Lemma \ref{lem19oct}, the quotient curve of $V$ with respect to the stabilizer of $X_{m},X_{m-1}$ is rational. Therefore its function field is $\mathbb{K}(x)$.

\begin{theorem}
\label{th200721} 
The Galois closure $M$ of $\mathbb{K}(\cX)|\mathbb{K}(x)$ is $\mathbb{K}(V)$, with Galois group isomorphic to ${\rm{Sym}}_{m-2}$.
\end{theorem}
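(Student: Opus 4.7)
\textbf{Proof plan for Theorem \ref{th200721}.}
The plan is to cast the statement as a routine application of the Galois correspondence, and then reduce the computation of the Galois closure to the classical fact that a one-point stabilizer in ${\rm{Sym}}_{m-2}$ has trivial normal core.

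First I would fix the dictionary between the relevant subgroups of $G$ and the intermediate fields of $\mathbb{K}(V)|\mathbb{K}(x)$. Let $U=G_{m,m-1}$ be the stabilizer in $G$ of the two coordinates $X_m, X_{m-1}$, while $H=G_{m,m-1,m-2}$ as in Section \ref{sq}. Then $U\cong {\rm{Sym}}_{m-2}$ acts naturally on $\{X_1,\ldots,X_{m-2}\}$, and $H\subset U$ is precisely the stabilizer of $X_{m-2}$ under this action. By Lemma \ref{lem19oct} the quotient $V/U$ is rational, so $\mathbb{K}(V)^U=\mathbb{K}(x)$, while by the very definition of $\cX$ in Section \ref{sq} we have $\mathbb{K}(V)^H=\mathbb{K}(\cX)$. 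The hypothesis $m\le p-1$ makes $|U|=(m-2)!$ coprime to the characteristic, hence the action of $U$ on $\mathbb{K}(V)$ is tame and $\mathbb{K}(V)|\mathbb{K}(x)$ is a Galois extension with group $U$.

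By the Galois correspondence applied to this extension, the Galois closure of $\mathbb{K}(\cX)|\mathbb{K}(x)$ inside $\mathbb{K}(V)$ is the fixed field of the normal core
\[
\bigcap_{\sigma\in U}\sigma H\sigma^{-1},
\]
which equals the kernel of the permutation representation of $U$ on the coset space $U/H$. Since $H$ is a point stabilizer in the natural action of $U$ on $\{X_1,\ldots,X_{m-2}\}$, the $U$-set $U/H$ is isomorphic to $\{X_1,\ldots,X_{m-2}\}$; as $U\cong {\rm{Sym}}_{m-2}$ acts faithfully on $m-2$ letters, the kernel is trivial. Therefore the Galois closure of $\mathbb{K}(\cX)|\mathbb{K}(x)$ is $\mathbb{K}(V)^{\{1\}}=\mathbb{K}(V)$, and its Galois group is $U\cong {\rm{Sym}}_{m-2}$.

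The argument carries no substantive obstacle. The only items to check are: (i) the identification of which subgroup of $G$ corresponds to $\mathbb{K}(x)$, which follows from Lemma \ref{lem19oct} and the fact that invariants of the two-coordinate stabilizer $U$ generate the rational quotient field; and (ii) the tameness of $\mathbb{K}(V)|\mathbb{K}(x)$, which is ensured by $m\le p-1$. Once those are in place, the conclusion is purely formal from the faithfulness of the standard action of the symmetric group.
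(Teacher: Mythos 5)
Your proposal is correct, and it reaches the conclusion by a genuinely different route from the paper. The paper assumes $M\subseteq \mathbb{K}(V)$, observes that ${\rm{Gal}}(\mathbb{K}(V)|M)$ is a normal subgroup of ${\rm{Gal}}(\mathbb{K}(V)|\mathbb{K}(x))\cong {\rm{Sym}}_{m-2}$, and then explicitly enumerates the possible nontrivial normal subgroups (namely ${\rm{Alt}}_{m-2}$, plus the Klein four-group in the special case $m=6$), ruling each out by an order count against $H=G_{m,m-1,m-2}\cong{\rm{Sym}}_{m-3}$. You instead identify ${\rm{Gal}}(\mathbb{K}(V)|M)$ directly as the normal core $\bigcap_{\sigma\in U}\sigma H\sigma^{-1}$, i.e.\ the kernel of the coset action of $U\cong{\rm{Sym}}_{m-2}$ on $U/H$, and observe that since $H$ is a point stabilizer this is the faithful natural action on $m-2$ letters, so the core is trivial. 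Your argument is uniform in $m$ (no separate treatment of $m=6$, where ${\rm{Sym}}_4$ has the extra normal subgroup of order $4$) and does not rely on knowing the normal subgroup lattice of ${\rm{Sym}}_{m-2}$; the paper's version is more hands-on but needs the case split. One small remark: your appeal to tameness is not actually needed for $\mathbb{K}(V)|\mathbb{K}(x)$ to be Galois with group $U$ — Artin's theorem gives this for the fixed field of any finite automorphism group in any characteristic — but it is harmless. Both arguments correctly use that $\mathbb{K}(V)^{U}=\mathbb{K}(x)$ via the rationality of $V/U$ from Lemma \ref{lem19oct}.
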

\begin{proof} Since $\mathbb{K}(V)|\mathbb{K}(x)$ is a Galois extension, $M$ may be assumed to be a subfield of $\mathbb{K}(V)$. Thus
$\mathbb{K}(V)|M$ is a Galois extension, as well. Galois theory yields that ${\rm{Gal}}(\mathbb{K}(V)|M)$ is a normal subgroup of ${\rm{Gal}}(\mathbb{K}(V)|\mathbb{K}(x))$. Since ${\rm{Gal}}(\mathbb{K}(V)|\mathbb{K}(x))\simeq {\rm{Sym}}_{m-2}$, for $ m\ge 7$ and $m=5$, the unique non-trivial normal subgroup of ${\rm{Gal}}(\mathbb{K}(V)|\mathbb{K}(x))$ is ${\rm{Alt}}_{m-2}$.
On the other hand, as $\mathbb{K}(\cX)$ is the fixed field of $G_{m,m-1,m-2}\cong {\rm{Sym}}_{m-3}$, $\mathbb{K}(\cX)\subseteq M\ne \mathbb{K}(V)$ would imply that  ${\rm{Alt}}_{m-2}$ is isomorphic to a subgroup of ${\rm{Sym}}_{m-3}$ which is impossible by $|{\rm{Sym}}_{m-3}|<|{\rm{Alt}}_{m-2}|$. If $m=6$, a further case arises, namely ${\rm{Gal}}(\mathbb{K}(V)|M)$ is normal in ${\rm{Alt}}_4$ and of order $4$. However, this is again impossible since $4\nmid |{\rm{Sym}}_3|=6$.
Thus $M=\mathbb{K}(V)$ and  ${\rm{Gal}}(\mathbb{K}(\cX)|\mathbb{K}(x))$ is
isomorphic to the subgroup of $G$ fixing $x$. Since this subgroup is isomorphic to the stabilizer of $X_{m},X_{m-1}$ in $G$, we have ${\rm{Gal}}(\mathbb{K}(\cX)|\mathbb{K}(x))\cong {\rm{Sym}}_{m-2}$.
\end{proof}

\section{The automorphism group of $V$}

We are now in position to prove that $G$ is the whole automorphism group of $V$ whenever $\aut(V)$ is tame, and in particular in zero characteristic. However, as Theorem \ref{th26062024A} shows, this is not true in positive characteristic, in which case $\aut(V)$ may contain additional elements of order $p$.
\begin{theorem}
\label{the171021} If $\mathbb{K}$ has zero characteristic, or it has positive characteristic and the $\mathbb{K}$-automorphism group of $V$ is tame, then $G$ is the $\mathbb{K}$-automorphism group of $V$.
\end{theorem}
\begin{proof}
For $m=5$, $V$ is the characteristic $p$ version of the Bring curve of genus $4$ whose automorphism group is isomorphic to ${\rm{Sym}}_5$. Therefore, we may limit ourselves to the cases where $m\ge 6$.

By way of a contradiction assume that $G={\rm{Sym}}_{m}$ is a proper subgroup of the $\mathbb{K}$-automorphism group $\Gamma$ of $V$.
Then two cases arise, according as $G$ is a normal subgroup of $\Gamma$ or is not.

In the former case, assume that the centralizer $C_\Gamma(G)$ of $G$ in $\Gamma$ is trivial. Then for any $\gamma\in\Gamma$ the map $g\mapsto \gamma^{-1}g\gamma$ is a non
trivial automorphism of $G$. Hence $\Gamma$ is isomorphic to a subgroup of $\Aut(G)$. However, if $m\ne 6$, then $\Aut(G)\cong G$, see for instance \cite[Section 2.4]{RW}. Hence $G=\Gamma$, a contradiction.
In the remaining case, $m = 6$, then $G\cong P\gamma L(2, 9)$ and $\Aut(G)\cong P\Gamma L(2,9)$; see \cite[Section 2.4.2]{RW}. Therefore, $[\Gamma : G] = 2$. Lemma \ref{3orbite} shows that $G$ has a unique orbit of length
$(m-1)! = 120$, namely $\Omega_\omega$. Since $G$ is a normal subgroup of $\Gamma$, this yields that $\Gamma$ also preserves $\Omega_\omega$. Hence,
the stabilizer of $P_\omega \in \Omega_\omega$ of $\Gamma$ has order $12$. This yields that $\Gamma$ has a cyclic subgroup of order $12$, but this is
impossible since $P\Gamma L(2, 9)$ has two subgroups of order $12$, up to conjugation, but neither is cyclic.

Otherwise, $C_\Gamma(G)$ is a nontrivial subgroup of $\Gamma$ which is disjoint from $G$ as $G$ has trivial center. Furthermore, since $C_\Gamma(G)$ is contained in the normalizer of $G$ in $\Gamma$, $C_\Gamma(G)$ induces a permutation group on the set of the short
orbits of $G$. By Lemma \ref{3orbite}, there are three such orbits and they have pairwise different lengths. Therefore, this permutation group is trivial, that is, $C_\Gamma(G)$ preserves each of the three short orbits of
$G$. Also, $C_\Gamma(G)\cong C_\Gamma(G)G/G$  is a isomorphic to a nontrivial $\mathbb{K}$-automorphism group $\bar{G}$ of the quotient curve $\bar{V}=V/G$ which fixes three points of $\bar{V}$. However, this is impossible as $\bar{V}$ is rational by Lemma \ref{lem19oct}.

In the case where $G$ is not normal in $\Gamma$, take any conjugate $\tilde{G}=\gamma^{-1}G\gamma$ of $G$ with $\gamma\in\Gamma$, and consider their intersection $N=G\cap \tilde{G}$. Assume first  $N={\rm{Alt}}_{m}$. If this occurs for every
$\gamma \in \Gamma$, then $N$ is a normal subgroup of $\Gamma$. As shown before, this is impossible. Now take $\tilde{G}$ such that $N\neq {\rm{Alt}}_{m}$. Then $|N|\le (m-1)!$; see \cite[Theorem 5.2B]{DM}. Therefore,
$|<G,\tilde{G}>|\ge |G|^2/(m-1)!=m\cdot m!$, whence $|\Gamma|\ge m\cdot m!$. A comparison with (\ref{eq11octB}) gives
$$ \frac{|\Gamma|}{\gg-1}=\frac{4m^2(m-1)}{(m-2)(m-3)-4}$$
whence $|\Gamma|>84(\gg-1)$ for $m=5,6$ and $m>15$ whereas $40(\gg-1)<|\Gamma|\leq 84(\gg-1)$ for the remaining cases of $m$.

Since $\Gamma$ is assumed to be tame, $\Gamma$ has exactly three short orbits, and the classical Hurwitz bound yields $|\Gamma|\le 84(\gg-1)$. So we are left with $7\leq m\leq 15$. From Hurwitz's proof given in \cite{sti} or
\cite[Theorem 11.56]{HKT}, it follows that $|\Gamma|>40(\gg-1)$ is only possible when $V$ has two points with stabilizers of $\Gamma$ of order $2$ and $3$, respectively. But Lemma \ref{3orbite} shows that the three
nontrivial point stabilizers of $\Gamma$ have order $m,m-1$ and $2$. This completes the proof.
\end{proof}

\label{nta}
\begin{theorem}
\label{th26062024A} If $\aut(V)$ contains some non-tame projectivities then $m=p-1$ and the linear subgroup $L\aut(V)$ of $\aut(V)$ is isomorphic to ${\rm{Sym}}_p$.
\end{theorem}
\begin{proof} Let $g\in \aut(V)$ be a projectivity preserving $V$. We show that $g$ preserves the non-degenerate quadric $\cQ$ of $PG(m-2,\mathbb{K})$ of equations
$$
\begin{cases}X_1 +  X_2 + \ldots + X_m=0,\\
X_1^2  +  X_2^2 + \ldots + X_m^2=0.
\end{cases}
$$
Since $g$ is a projectivity,  Lemma \ref{lem14octC} yields that $g$ preserves the hyperplane of equation $X_1+X_2+\ldots+X_m=0$. Furthermore, $g(\cQ)$ is a quadric of equation $F=F(X_1,X_2,\ldots,X_m)=0$ where $\deg(F)=2$. Since $g\in\aut(V)$,
$$F=u_1(X_1+X_2+\ldots+X_m)+u_1(X_1^2+X_2^2+\ldots+X_m^2)+\ldots+u_{m-2}(X_1^{m-2}+X_2^{m-2}+\ldots+X_m^{m-2})$$
 where $u_1,u_2,\ldots,u_{m-2}\in \mathbb{K}[X_1,X_2,\ldots,X_{m-2}]$.
 Therefore, $u_i=0$ for $2<i\le m-2$.  Now, take any point $P=P(\xi_1,\ldots,\xi_m)\in Q$. Then $F(\xi_1,\xi_2,\ldots,\xi_m)=u_1(\xi_1+\xi_2+\ldots+\xi_m)+u_2(\xi_1^2+\xi_2^2+\ldots+\xi_m^2)=0+0=0$. Therefore, $g(P)\in \cQ$.  
In particular, $L\aut(V)$ contains no elation.

Let $N$ be the subgroup generated by all involutory homologies of $L\aut(V)$. From Lemma \ref{lem18oct}, $G=\rm{Sym}_m$ is generated by involutory homologies and hence $L\aut(V)$ contains $G$. Also,  $N$ is a normal subgroup of $L\aut(V)$. The centralizer of $\rm{Sym}_m$ in $L\aut(V)$ is trivial, since the centers of the involutory homologies in $G$ span $PG(m-2,\mathbb{K})$ as we have pointed out in the proof of Theorem \ref{th26062024B}. Therefore, if $N=G$ is supposed, then $L\aut(V)$ is isomorphic to a subgroup of the automorphism group $\aut({\rm{Sym}}_m)$  of $\rm{Sym}_m$. For $m\ne 6$,  $\aut({\rm{Sym}}_m)\cong {\rm{Sym}}_m$, see for instance \cite[Section 2.4]{RW}. From this, $L\aut(V)\cong \rm{Sym}_m$ whence $L\aut(V)=G$ for $m\ne 6$ but this contradicts the hypothesis that $\aut(V)$ contains non-tame projectivities. In the exceptional case, $|\aut({\rm{Sym}}_6)|=2| {\rm{Sym}}_6|=1440=2^5\cdot3^2\cdot 5$; in particular $p$ does not divide $|L\aut(V)|$, and hence $L\aut(V)=G$ by Theorem \ref{the171021}.      
 Therefore, $G$ is a proper subgroup of $N$.
 Now, we apply Result \ref{zalesA} to $N$. By Lemma \ref{lem11octE} and Henn's classification theorem \cite[Theorem 11.127]{HKT}, inequality (\ref{eq270624}) does not hold in our case. Therefore, $m=p-1$ and $N\cong \rm{Sym}_p$.

We have to show that if $m=p-1$ then $L\aut(V)$ is indeed larger than $\rm{Sym}_m$. Let $m=p-1$. We show that the linear map
 $$\lambda:(X_1,\ldots,X_j,\ldots,X_{p-1})\rightarrow (X_1-X_{p-1},\ldots,X_j-X_{p-1},\ldots,X_{p-2}-X_{p-1},-X_{p-1})$$  preserves $V$.
 By Section \ref{powersum}, in the function field $\mathbb{K}(V)$, $\sigma_k(X_1,X_2,\ldots,X_{p-1})=0$ for $k=1,\ldots,p-3$.
 We will use the following equations
$$
\sigma_k(X_1,\dots,X_{p-2})=\sigma_{k}(X_1,\dots,X_{p-1})-X_{p-1}\sigma_{k-1}(X_1,\dots,X_{p-2}).
$$
Since $\sigma_{1}(X_1,\dots,X_{p-2})=-X_{p-1}$, induction on $k$ gives
\begin{equation}\label{ekp-2}
    \sigma_k(X_1,\dots,X_{p-2})=(-1)^kX_{p-1}^k,
\end{equation}
Write $\sigma_k(X_1-X_{p-1}, X_2-X_{p-1}, \dots, X_{p-2}-X_{p-1}, -X_{p-1})$ as a sum of two terms:
\begin{align*}
&\sigma_k(X_1-X_{p-1}, X_2-X_{p-1}, \dots, X_{p-2}-X_{p-1},- X_{p-1})=\\
&\left(\sum_{1\leq {i_1}<\dots<{i_k}\leq p-2}(X_{i_1}-X_{p-1})\cdots (X_{i_k}-X_{p-1})\right)-{X_{p-1}\left(\sum_{1\leq {i_1}<\dots<{i_{k-1}}\leq p-2}(X_{i_1}-X_{p-1})\cdots (X_{i_k}-X_{p-1})\right)}
\end{align*}
which is equal to
$$\sigma_k(X_1-X_{p-1}, X_2-X_{p-1}, \dots, X_{p-2}-X_{p-1})-X_{p-1}\sigma_{k-1}(X_1-X_{p-1}, X_2-X_{p-1}, \dots, X_{p-2}-X_{p-1}).$$

Next, write $\sigma_k(X_1-X_{p-1}, X_2-X_{p-1}, \dots, X_{p-2}-X_{p-1})$ as a polynomial in $X_{p-1}$. Expanding $\sigma_k$ shows that each product $X_{i_1}\cdots X_{i_j}$, with $X_{i_l}\in\{X_1,\dots,X_{p-2}\}$, appears every time when the remaining $k-j$ factors in the product are equal to $X_{p-1}$. This occurs every time when the remaining $k-j$ indeterminates $X_{i_l}$ are chosen among $\{X_1,\dots,X_{p-2}\}\setminus\{X_{i_1},\dots,X_{i_j}\}$, that is, it appears $\binom{p-2-j}{k-j}$ times. Summing up,
$$
\sigma_k(X_1-X_{p-1}, X_2-X_{p-1}, \dots, X_{p-2}-X_{p-1})=\sum_{j=0}^k (-1)^{k-j}\binom{p-2-j}{k-j}X_{p-1}^{k-j}\sigma_j(X_1,\dots,X_{p-2}).
$$
This together with  \eqref{ekp-2} yield
$$
\sigma_k(X_1-X_{p-1}, X_2-X_{p-1}, \dots, X_{p-2}-X_{p-1})=(-1)^{k}X_{p-1}^{k}\sum_{j=0}^k \binom{p-2-j}{k-j}.
$$
To compute the sum on the right hand side, the following identities will be useful
$$
\binom{n}{k}=\binom{n+1}{k}-\binom{n}{k-1},\qquad \binom{p-1}{k} \equiv (-1)^k \hspace{-0.2cm}\pmod{p}.
$$
Now,
\begin{align*}
&\sum_{j=0}^k \binom{p-2-j}{k-j}=
\sum_{j=0}^{k-1}\left[ \binom{p-2-j+1}{k-j}-\binom{p-2-j}{k-j-1}\right]+\binom{p-2-k}{0}=\\
&\binom{p-1}{k}-\binom{p-2-k+1}{0}+\binom{p-2-k}{0}=\binom{p-1}{k}\equiv (-1)^k \hspace{-0.2cm}\pmod{p};
\end{align*}
Summing up,
\begin{align*}
&\sigma_k(X_1-X_{p-1}, X_2-X_{p-1}, \dots, X_{p-2}-X_{p-1},- X_{p-1})=\\
&\sigma_k(X_1-X_{p-1}, X_2-X_{p-1}, \dots, X_{p-2}-X_{p-1})-X_{p-1}\sigma_{k-1}(X_1-X_{p-1}, X_2-X_{p-1}, \dots, X_{p-2}-X_{p-1})=\\
&(-1)^{2k}X_{p-1}^k+(-1)^{2k-2+1}X_{p-1}^k=0.
\end{align*}
By Section \ref{powersum}, this completes the proof. It also shows for $m=p-1$ that the projectivity $\Lambda$ of $PG(m-2,\mathbb{K})$ defined by $\lambda$ preserves $V$. We point out that $\Lambda$ is an involutory homology. From the definition of $\lambda$, every point on the hyperplane $H_{p-1}$  of equation $X_{p-1}=0$, as well as, the point $C=(1:1:\cdots:2)$ are fixed by $\Lambda$. Therefore, $\Lambda$ is a homology with center $C$ and axis $H_{p-1}$. Furthermore,
$\Lambda^2$ the identity and hence $\Lambda$ is an involution.
\end{proof}

In the subsequent sections we further investigate the two extremal cases in positive characteristic, namely $m=5$ and $m=p-1$. Accordingly, from now on, $\mathbb{K}$ stands for the algebraic closure of the field $\mathbb{F}_p$ where $p\geq 7$ is a prime. Then, $V$ is defined over $\mathbb{F}_p$ and viewed as a curve defined over $\mathbb{K}$. The group $G$ is also defined over $\mathbb{F}_p$, and it preserves the set $\cX(\mathbb{F}_{p^i})$ of points of $V$ defined over $\mathbb{F}_{p^i}$ for every $i \ge 1$.

\section{The case of positive characteristic: $m=p-1$}
Throughout this section we always assume $m=p-1$.
\subsection{The action of $L\aut(V)$ on $V$}
First, we point out a result on the number of solutions of System (\ref{sy}).
\begin{theorem}
\label{redeith} System (\ref{sy}) modulo $p$ has as many as $(p-1)!$ solutions.
\end{theorem}
\begin{proof}
One can count the solutions of (\ref{sy}) modulo $p$ up to a non-zero constant factor by computing the number of points of $V$ over $\mathbb{F}_p$.   Since any primitive $(p-1)$-th roots of unity in $\mathbb{K}$ is in $\mathbb{F}_p$, Lemma \ref{lem10octAchar0} yields $|V(\mathbb{F}_{p})|=(p-2)!$, and the claim follows.
\end{proof}

\begin{proposition}\label{3orbiteI}
The short orbits of $L\aut(V)$ on $V$ are exactly $\Omega_\omega$ and $\Omega_\varepsilon\cup \Omega_\theta$, and they have lengths $(m-1)!$ and $m(m-2)!+m!/2$, respectively.
\end{proposition}
\begin{proof} Theorem \ref{redeith} shows that $\Omega_\omega$ consists of all $\mathbb{F}_p$-rational points of $V$. From the proof of Theorem \ref{th26062024A}, $L\aut(V)$ is defined over $\mathbb{F}_p$. Therefore a short orbit of $L\aut(V)$ is $\Omega_\omega$. Since $p$ is prime to $|\Omega_\omega|$, this yields that the order of the stabilizer of a point in $\Omega_\omega$ is divisible by $p$, i.e. $\Omega_\omega$ is a non-tame orbit.  Actually, $\Omega_\omega$ is the unique non-tame orbit of $L\aut(V)$ on $V$. In fact, since $p^2$ does not divide $L\aut(V)$, any two $p$-subgroup of $L\aut(V)$ are conjugate in $L\aut(V)$, and hence the fixed points of the $p$-subgroups of $L\aut(V)$ belong to the same orbit of $L\aut(V)$. From \cite[Theorem 11.56]{HKT}, $L\aut(V)$ has either one or two tame short orbits on $V$.

The former case is investigated first. Let $\Omega$ be the unique tame-orbit of $L\aut(V)$ on $V$. Choose a point $Q$ from $\Omega_\varepsilon$. Then $Q$ is fixed by a cyclic group of odd order $p-2$. From the proof of Theorem \ref{th26062024A}, $Q$ is also fixed by the involutory homology $\Lambda$. Therefore, the stabilizer of $Q$ in $L\aut(V)$ is a cyclic group of order $2(p-2)\mu$. Since the quotient curve $V/L\aut(V)$ is rational by Lemma \ref{lem19oct}, the Hurwitz genus formula applied to $L\aut(V)$ gives
$$2\gg(V)-2=(p-2)!(\kappa(p-1)-1)-\frac{p!}{2(p-2)\mu}$$
where $\kappa$ counts the ramification groups of $L\aut(V)$ at $P$. This together with Lemma \ref{lem11octE} yield
\begin{equation}
\label{eq29062024}
p^2-5p+4+\frac{p(p-1)}{\mu}=2\kappa(p-2)(p-1).
\end{equation}
 In particular, Equation \eqref{eq29062024} yields $\mu|(p-1)$ by $p\nmid \kappa$. Taking (\ref{eq29062024}) mod $p$, $4\kappa \equiv 4 \pmod{p}$ is obtained whence $\kappa\equiv 1 \pmod{p}$ follows. For $\kappa\ge p+1$, (\ref{eq29062024}) is impossible. Thus $\kappa=1$, and hence $\mu=1$, that is, $|L\aut(V)_Q|=2(p-2)$. Therefore, the unique short tame-orbit $\Omega$ of $L\aut(V)$ has size $(p-1)(p-3)!+(p-1)!/2$. Since $\Omega$ contains both $\Omega_\varepsilon$ and $\Omega_\theta$, this yields $\Omega=\Omega_\varepsilon \cup \Omega_\theta$.

 It remains to rule out the possibility of two short tame-orbits. Assume on the contrary that $\Omega_1$ and $\Omega_2$ are two distinct short non-tame orbits of $L\aut(V)$. We may suppose $\Omega_\varepsilon\subseteq \Omega_1$. From the Hurwitz genus formula applied to $L\aut(V)$,
$$2\gg(V)-2=(p-2)!(\kappa(p-1)-1)-\frac{p!}{2(p-2)\mu}+|\Omega_2|(|L\aut(V)|-1).$$
Since $|L\aut(V)|-1\ge \ha |L\aut(V)|$, and $(p-2)!((p-1)-1)>\ha p!/(p-2)$, this yields $4\gg(V)-4>|L\aut(V)|$. But this is impossible, as $4\gg(V)-4=2((p-2)(p-3)-4)(p-3)!<p!=|L\aut(V)|$.
\end{proof}
\begin{proposition}
\label{prop30062024} $L\aut(V)$ acts on $\Omega_\omega$ as a primitive permutation group.
\end{proposition}
\begin{proof} Let $P$ be a point in $\Omega_\omega$. Then the stabilizer $U$ of $P$ in $L\aut(V)$ contains a cyclic group $H$ of order $p-1$. From the proof of Proposition \ref{3orbiteI}, $U$ also contains a Sylow $p$-subgroup $S_p$ of order $p$. Since $|\Omega_\omega|=(p-3)!$ and $|L\aut(V)|=p!$, it follows that $|U|=p(p-1)$. Therefore $U=S_p\rtimes H$. Now, look at the natural action of $\rm{Sym}_p$ on a set $\Theta$ of size $p$. Since $|U|=p(p-1)$, $U$ acts as a $2$-transitive permutation group.  Take a subgroup $W$ of $S_p$ that contains $U$ properly. From the classical Burnside theorem \cite[ Chapter X, Section 151-154]{burns}, $W$ is an almost simple group, i.e. there exists a simple group $T$ such that $T\le W\le \aut(T)$ up to an isomorphism. Thus Guralnick's classification, see Result \ref{gu}, applies to $T$. Case (ii) is ruled out  by Result \ref{shar}. Cases (iii) are also impossible as neither $PGL(2,11)$ nor $\aut(M_{11})=M_{11}$ have a subgroup of order $11\cdot10=110$ whereas $\aut(M_{23})=M_{23}$ has no subgroup of order $23\cdot 22=506$. Therefore, Case (i) occurs and hence $W= \rm{Alt}_p$, or $W=\rm{Sym}_p$. But $\rm{Alt}_p$ contains no cyclic group of order $p-1$ whence $W=\rm{Sym}_p$ follows. 
\end{proof}

\subsection{Further results on $V(\mathbb{F}_{p^i})$.}

From the proof of Theorem \ref{redeith}, $V(\mathbb{F}_p)$ is the $G$-orbit $\mathcal{O}_\omega$ defined in Lemma \ref{lem10octAchar0}. Furthermore, Lemmas \ref{lem7oct} and \ref{lem8octU} have the following corollary.
\begin{lemma}
\label{lem170621} The $G$-orbit $\mathcal{O}_\varepsilon$ is contained in $V(\mathbb{F}_{p^i})$ but not in $V(\mathbb{F}_{p^j})$ for $j<i$, where $\mathbb{F}_{p^i}$ is the smallest subfield of $\mathbb{K}$ containing a $(p-2)$-th primitive root $\varepsilon$ of unity.
\end{lemma}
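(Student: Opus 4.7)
The plan is to prove both halves by analyzing the minimal field of definition of the single point $P_\varepsilon=(\varepsilon:\varepsilon^2:\cdots:\varepsilon^{p-2}:0)$ as a projective point, and then to propagate the conclusion to the whole orbit using the fact that $G$ is defined over the prime field.

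First I would verify the easy containment $\mathcal{O}_\varepsilon \subseteq V(\mathbb{F}_{p^i})$. By the very choice of $i$, the element $\varepsilon$ and all its powers lie in $\mathbb{F}_{p^i}$, so every coordinate of $P_\varepsilon$ is in $\mathbb{F}_{p^i}$, i.e.\ $P_\varepsilon\in V(\mathbb{F}_{p^i})$. Since $G=\mathrm{Sym}_m$ acts on $\mathrm{PG}(m-1,\mathbb{K})$ by coordinate permutations, every $g\in G$ is defined over $\mathbb{F}_p\subseteq\mathbb{F}_{p^i}$; hence $g(P_\varepsilon)$ has a coordinate vector that is just a reordering of $(\varepsilon,\varepsilon^2,\ldots,\varepsilon^{p-2},0)$, still with all entries in $\mathbb{F}_{p^i}$. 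This yields $\mathcal{O}_\varepsilon=G\cdot P_\varepsilon\subseteq V(\mathbb{F}_{p^i})$.

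For the negative direction I would proceed by contradiction. Suppose $\mathcal{O}_\varepsilon\subseteq V(\mathbb{F}_{p^j})$ for some $j<i$; then in particular $P_\varepsilon\in V(\mathbb{F}_{p^j})$, so there exists $\mu\in\mathbb{K}^*$ such that every entry of $\mu\cdot(\varepsilon,\varepsilon^2,\ldots,\varepsilon^{p-2},0)$ lies in $\mathbb{F}_{p^j}$. Forming the ratio of the second and first coordinates gives $\varepsilon=(\mu\varepsilon^2)/(\mu\varepsilon)\in\mathbb{F}_{p^j}$. Since $\varepsilon$ is a primitive $(p-2)$-th root of unity, this forces $(p-2)\mid(p^j-1)$, i.e.\ $\mathbb{F}_{p^i}=\mathbb{F}_p(\varepsilon)\subseteq\mathbb{F}_{p^j}$, contradicting $j<i$.

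There is no real obstacle here: the argument is essentially bookkeeping about projective coordinates and the cyclotomic description of subfields of $\overline{\mathbb{F}_p}$. The one place where a little care is needed is the transition from the projective statement $P_\varepsilon\in V(\mathbb{F}_{p^j})$ to the affine condition $\varepsilon\in\mathbb{F}_{p^j}$, which is handled uniformly by taking the ratio of two conveniently chosen homogeneous coordinates of $P_\varepsilon$.
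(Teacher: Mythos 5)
Your proof is correct and matches the paper's intent: the paper states this lemma as an immediate corollary of Lemmas \ref{lem7oct} and \ref{lem8octU} (the points of $\mathcal{O}_\varepsilon$ have coordinates that are permutations of $(\varepsilon,\varepsilon^2,\ldots,\varepsilon^{p-2}=1,0)$), and you have simply written out the routine verification: all coordinates lie in $\mathbb{F}_{p^i}$ by choice of $i$, while the ratio of two suitable coordinates recovers $\varepsilon$ itself, so no point of the orbit can be rational over a proper subfield of $\mathbb{F}_{p^i}$. The care you take in passing from the projective condition to the affine condition via coordinate ratios is exactly the right (and only) point needing attention.
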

We are in a position to prove the following theorem.
\begin{theorem}\label{nopuntiFp2}
The curve $V$ has no proper $\mathbb{F}_{p^2}$-rational point, that is $V(\mathbb{F}_{p^2})=V(\mathbb{F}_{p})$.
\end{theorem}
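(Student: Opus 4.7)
The plan is to argue by contradiction: I assume $V(\mathbb{F}_{p^2})\supsetneq V(\mathbb{F}_p)$ and derive a contradiction using the St\"ohr-Voloch bound together with the $G$-orbit structure from Section~\ref{secautgroup}. Since both $V$ and $G\cong\Sym_{p-1}$ are defined over $\mathbb{F}_p$, the $G$-action commutes with the $p^2$-power Frobenius $\Phi$, and so the fixed locus $V(\mathbb{F}_{p^2})$ is a union of $G$-orbits. By Lemma~\ref{3orbite}, the possible $G$-orbit lengths on $V$ are $(p-2)!$, $(p-1)(p-3)!$, $(p-1)!/2$ and $(p-1)!$. Since $V(\mathbb{F}_p)=\Omega_\omega$ has length $(p-2)!$ by Lemma~\ref{lem1210}, any proper $\mathbb{F}_{p^2}$-point would force a further entire $G$-orbit into $V(\mathbb{F}_{p^2})$.

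The orbit $\Omega_\varepsilon$ of length $(p-1)(p-3)!$ is ruled out by Lemma~\ref{lem170621}. Indeed $\Omega_\varepsilon\subseteq V(\mathbb{F}_{p^d})$ is equivalent to $\mathbb{F}_{p^d}$ containing a primitive $(p-2)$-th root of unity, i.e.\ $(p-2)\mid p^d-1$; and since $p\equiv 2\pmod{p-2}$, this reads $(p-2)\mid 2^d-1$. For $p\geq 7$ one has $p-2\geq 5$ and $2^2=4\not\equiv 1\pmod{p-2}$, so the smallest admissible $d$ is at least $3$. Hence $\Omega_\varepsilon\not\subseteq V(\mathbb{F}_{p^2})$, and the smallest possible extra orbit has length $(p-1)!/2$, giving
$$|V(\mathbb{F}_{p^2})|\;\geq\;(p-2)!+\tfrac{(p-1)!}{2}\;=\;\tfrac{(p+1)(p-2)!}{2}.$$

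Next I would contradict this estimate via the St\"ohr-Voloch bound applied to $V\subset\mathrm{PG}(p-3,\mathbb{K})$ over $\mathbb{F}_{p^2}$, using $n=(p-3)!$ (Lemma~\ref{lem8oct}), $r=p-3$ and $2\gg-2=\tfrac12((p-3)(p-4)-4)(p-3)!$ (Lemma~\ref{lem11octE}). By Theorem~\ref{the17ag21} the generic order sequence of $V$ contains the exceptional value $p$, and by Theorem~\ref{the191021} $V$ is Frobenius non-classical over $\mathbb{F}_p$. The core computation is to extract the Frobenius order sequence $(\nu_0,\ldots,\nu_{r-1})$ relative to the $p^2$-Frobenius and plug it into
$$|V(\mathbb{F}_{p^2})|\;\leq\;\tfrac{1}{r}\bigl((\nu_1+\cdots+\nu_{r-1})(2\gg-2)+(p^2+r)\,n\bigr),$$
so as to produce a bound strictly smaller than $(p+1)(p-2)!/2$.

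The delicate point is precisely this last step: the na\"ive classical choice $\nu_i=i$ overshoots the target by a factor of order $p$, so it is essential to exploit the jump forced by $\varepsilon_r=p$, either in selecting the right $\nu$-sequence or in refining the local contribution of each point of $\Omega_\omega$ to the St\"ohr-Voloch divisor. Should the direct estimate on $V$ still be insufficient, my fallback is to pass through the Galois plane quotient $V\to V_{p-3}$ of degree $(p-4)!$ provided by Proposition~\ref{pro5mar}: the base $V_{p-3}$ is a classical non-singular plane curve of degree $p-3<p$ whose $\mathbb{F}_{p^2}$-rational points are explicitly described in \cite{voloch}, and the claim would then be reduced to showing that no fiber over $V_{p-3}(\mathbb{F}_{p^2})\setminus V_{p-3}(\mathbb{F}_p)$ can split completely over $\mathbb{F}_{p^2}$.
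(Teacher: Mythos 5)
Your orbit analysis is sound and matches the paper's first step: $V(\mathbb{F}_{p^2})$ is a union of $G$-orbits, a proper $\mathbb{F}_{p^2}$-point forces a whole new orbit, and your explicit exclusion of $\Omega_\varepsilon$ via $(p-2)\mid 2^d-1$ is correct (the paper leaves this implicit). The gap is in the main analytic step. Applying the St\"ohr--Voloch bound directly to $V\subset\mathrm{PG}(p-3,\mathbb{K})$ cannot work, no matter how cleverly the Frobenius orders are chosen: the $\nu_i$ form a strictly increasing sequence of non-negative integers, so $\sum_{i=1}^{r-1}\nu_i\geq\binom{p-3}{2}$ always, and the genus term alone is then at least $\tfrac{1}{p-3}\binom{p-3}{2}\cdot\tfrac{1}{2}\bigl((p-3)(p-4)-4\bigr)(p-3)!\sim\tfrac{p^3}{4}(p-3)!$, which already exceeds your target $\tfrac{(p+1)(p-2)}{2}(p-3)!\sim\tfrac{p^2}{2}(p-3)!$. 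Worse, the feature you hope to exploit works against you: Frobenius non-classicality forces $\nu_{r-1}=\varepsilon_r\geq p$, which \emph{increases} $\sum\nu_i$ and weakens the bound. The genus of $V$ is simply too large relative to its degree for a direct count.

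Your ``fallback'' is in fact the paper's actual proof, but it must be run in the opposite direction from the one you describe, and it is the step you left unexecuted. One does not start from $V_{p-3}(\mathbb{F}_{p^2})$ and analyse fiber splitting (which would also require handling fibers over $\mathbb{F}_p$-points and would not reduce to ``no fiber splits completely''). Instead, one pushes the hypothetical new $G$-orbit $\Omega$ \emph{down}: with $H=G_{m,m-1,m-2}$, each $H$-orbit inside $\Omega$ maps to a distinct $\mathbb{F}_{p^2}$-point of the plane quotient $\cX=V/H$ of degree $p-3$, giving $|\cX(\mathbb{F}_{p^2})|\geq(p-1)(p-2)(p-3)$ when $\Omega$ is a long orbit, and $|\cX(\mathbb{F}_{p^2})|\geq\lambda_1+\lambda_2=(p-3)(p^2-6p+11)$ when $\Omega=\Omega_\theta$ (this is exactly what Lemma \ref{lambda12} is for). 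Since $\cX$ is a non-singular plane curve of degree $p-3<p$, it is Frobenius classical and the St\"ohr--Voloch bound reads $2|\cX(\mathbb{F}_{p^2})|\leq(2\mathfrak{g}(\cX)-2)+(p^2+2)(p-3)=(p-3)(p^2+p-4)$; both counts contradict this for $p\geq 7$ (the case $p=7$ in the short-orbit branch needs a separate computation). Without this descent to the low-genus plane quotient, your argument does not close.
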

\begin{proof}
By way of a contradiction, $|V(\mathbb{F}_{p^2})|>| V(\mathbb{F}_{p})|$ is assumed. Then, since $G$ takes $\mathbb{F}_{p^2}$-rational points to $\mathbb{F}_{p^2}$-rational points, there exists a
$G$-orbit $\Omega$ entirely contained in $V(\mathbb{F}_{p^2})\setminus V(\mathbb{F}_{p})$.

Assume first that $\Omega$ is a long orbit, that is $|\Omega|=(p-1)!$, and let $H$ be the stabilizer of $X_{p-1}, X_{p-2}, X_{p-3}$ in $G$. Then $H$ partitions $\Omega$ into $(p-1)(p-2)(p-3)$ long $H$-orbits. Since each $H$-orbit corresponds to a point of $\mathcal{X}=V/H$, it follows that
\begin{equation*}
|\mathcal{X}(\mathbb{F}_{p^2})\setminus \mathcal{X}(\mathbb{F}_{p})|\geq (p-1)(p-2)(p-3).
\end{equation*}
On the other hand, the St\"ohr-Voloch bound \cite{sv}, (see also \cite[Theorem 8.41]{HKT}) applied to $\cX$  gives
\begin{equation*}
2|\mathcal{X}(\mathbb{F}_{p^2})| \leq (2\mathfrak{g}(\cX)-2)+(p^2+2)(p-3)=(p-6)(p-3)+(p^2+2)(p-3)=(p-3)(p^2+p-4).
\end{equation*}
Therefore, $2(p-1)(p-2)(p-3)\le (p-3)(p^2+p-4)$. But then $p<7$, a contradiction.

Assume now that $\Omega$ is a short orbit. From Lemma \ref{3orbite}, the only possibility is that $\Omega=\Omega_\theta$ for some transposition $\theta\in G$. Since each $H$-orbit corresponds to a point of $\mathcal{X}$, Lemma \ref{lambda12} implies
\begin{equation*}
|\mathcal{X}(\mathbb{F}_{p^2})\setminus \mathcal{X}(\mathbb{F}_{p})|\geq \lambda_1+\lambda_2=(p-3)(p^2-6p+11).
\end{equation*}
This time the St\"ohr-Voloch bound gives
\begin{equation*}
2\cdot (p-3)(p^2-6p+11)\leq (p-3)(p^2+p-4),
\end{equation*}
which is only possible for $p=7$. However, a Magma  aided computation rules out this possibility.
\end{proof}
From Lemmas \ref{lem10octAchar0} and \ref{lem7oct}, $\Omega_\omega=V(\mathbb{F}_p)$ and $\Omega_\varepsilon \subset V(\mathbb{F}_{p^j})$ respectively, where $\mathbb{F}_{p^j}$ is the smallest subfield of $\mathbb{K}$ containing a primitive ($p-2$)-th root of unity. We prove an analog claim for $\Omega_\theta$. As pointed out in the proof of Lemma \ref{lem11octC},  $\Omega_\theta$ has a point $P$ whose last two coordinates are equal $1$.
\begin{lemma}
\label{lem22jun21} Let $P=(\xi_1,\xi_2,\ldots,\xi_{p-3},1,1)$ be a point of $\Omega_\theta$. Then $\xi_{p-3}\in \mathbb{F}_{p^{j}}$ for some $1<j\le p-3$. Furthermore, if $\xi_{p-3}\not \in  \mathbb{F}_{p^{j}}$ with $j<p-3$ then, up to a permutation of the indices $\{1,2,\ldots, p-4\}$,
$$x_j=x_{p-3}^{p^j}\quad j=1,2,\ldots p-4.$$
\end{lemma}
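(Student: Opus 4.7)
The plan is to exploit the Frobenius endomorphism $\Phi\colon x\mapsto x^p$, which acts on $V$ because the system~(\ref{sy}) has coefficients in $\mathbb{F}_p$. Applied coordinatewise to $P$, the image $\Phi(P)=(\xi_1^p,\ldots,\xi_{p-3}^p,1,1)$ is again a point of $V$, and it agrees with $P$ in the last two coordinates.

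The key move is to invoke Lemma~\ref{lem10giugno} with $r=2$ and the fixed indices $p-2, p-1$. This produces a permutation $\sigma\in\Sym_{p-3}$ of the first $p-3$ positions such that $\xi_i^p=\xi_{\sigma(i)}$ for every $i\in\{1,\ldots,p-3\}$. Iterating gives $\xi_{p-3}^{p^k}=\xi_{\sigma^k(p-3)}$, so if $n$ denotes the length of the $\sigma$-orbit of $p-3$, then $\xi_{p-3}^{p^n}=\xi_{p-3}$ and consequently $\xi_{p-3}\in\mathbb{F}_{p^n}$ with $n\le p-3$. Choosing $j=\max(n,2)$ yields $1<j\le p-3$ with $\xi_{p-3}\in\mathbb{F}_{p^j}$, establishing the first assertion.

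For the furthermore part, the hypothesis that $\xi_{p-3}\notin\mathbb{F}_{p^j}$ for any $j<p-3$ forces $n=p-3$, so $\sigma$ is a single $(p-3)$-cycle on $\{1,\ldots,p-3\}$. Writing the cycle with starting point $p-3$, the iterates $\sigma(p-3),\sigma^2(p-3),\ldots,\sigma^{p-4}(p-3)$ enumerate $\{1,\ldots,p-4\}$ in some order. Relabelling the indices $\{1,\ldots,p-4\}$ through the inverse of this enumeration--an operation induced by the element of $G$ that fixes positions $p-3, p-2, p-1$, and therefore yields another representative of $\Omega_\theta$ of the prescribed shape $(\ast,\ldots,\ast,\xi_{p-3},1,1)$--one may assume $\sigma^j(p-3)=j$ for $j=1,\ldots,p-4$. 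Then the relation $\xi_j=\xi_{\sigma^j(p-3)}=\xi_{p-3}^{p^j}$ follows at once for $j=1,\ldots,p-4$, which is the desired identity.

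\textbf{Expected obstacle.} No serious obstruction is foreseen: the proof is a short Frobenius computation underwritten by Lemma~\ref{lem10giugno}. The only point requiring care is the relabelling step in the second part, where one must check that permuting the first $p-4$ labels (while keeping $p-3, p-2, p-1$ untouched) is realised by an element of the stabilizer $G_{p-3,p-2,p-1}\le G$ acting on coordinates, and hence preserves both $\Omega_\theta$ and the requirement that the last two coordinates of the representative equal $1$.
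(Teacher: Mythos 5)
Your proposal is correct and follows essentially the same route as the paper: both apply the Frobenius map to $P$, invoke Lemma~\ref{lem10giugno} with the two fixed coordinates equal to $1$, and deduce that the Frobenius iterates of $\xi_{p-3}$ stay inside the set $\{\xi_1,\ldots,\xi_{p-3}\}$ of pairwise distinct coordinates, which pins down the field of definition and, in the extremal case, identifies the remaining coordinates with $\xi_{p-3}^{p^j}$ up to relabelling. Your version merely makes the permutation $\sigma$ and its orbit structure explicit where the paper argues by a pigeonhole count, but the underlying idea is identical.
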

 \begin{proof} Since both $V$ and $\Omega_\theta$ are defined over $\mathbb{F}_p$, Lemma \ref{3orbite} shows that the Frobenius map $\Phi$  takes $P$ to the point $P^{(p)}\in \Omega_\theta$ where $P^{(p)}=(\xi_1^p,\xi_2^p,\ldots,\xi_{p-3}^p,1,1)$. Also, $\Phi^i$ takes $P$ to the point
 $$P^{(p^i)}=(\xi_1^{p^i},\xi_2^{p^i},\ldots,\xi_{p-3}^{p^i},1,1)$$
 of $\Omega_\theta$. To prove the first claim, assume on the contrary $\xi_{p-3}\not \in  \mathbb{F}_{p^{j}}$ for $j\le p-3$.  Then $\xi_{p-3},
 \xi_{p-3}^p,\ldots, \xi_{p-3}^{p^{p-3}}$ are pairwise distinct. On the other hand, from Lemma \ref{lem10giugno}, $\xi_{p-3}^{p^i}\in \{\xi_1,\xi_2,\ldots,\xi_{p-3}\}$ for any $i\ge 0$; a contradiction. To prove the second claim, we may assume that $\xi_{p-3}\in \mathbb{F}_{p^{p-3}}$. Then the previous argument shows that  $\{\xi_{p-3}, \xi_{p-3}^p,\ldots, \xi_{p-3}^{p^{p-3}}\}=\{\xi_1,\xi_2,\ldots,\xi_{p-3}\}$ whence the claim follows.
 \end{proof}
 Theorem \ref{the24jun} together with Lemma \ref{lem22jun21} have the following corollary.
 \begin{proposition}
 \label{lem24jun21} Let $\mathbb{F}_{p^j}$ be the subfield of $\mathbb{K}$ which is the splitting field of the polynomial $f(X)=X^{p-3}+2X^{p-4}+3X^{p-5}+\ldots+(p-3)X+p-2$. Then $\Omega_\theta \subset V(\mathbb{F}_{p^j})$ but $\Omega_\theta \nsubseteq  V(\mathbb{F}_{p^i})$ for $i<j$.
  \end{proposition}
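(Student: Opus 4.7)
The plan is to identify the polynomial $f$ with the specialisation $G_{p-3}(X,1,1)$ of the defining polynomial of the quotient plane curve $\mathcal{X}=V/G_{m,m-1,m-2}$ from Theorem \ref{the24jun} (with $m=p-1$), and then deduce the minimal field of definition of $\Omega_\theta$ directly from the splitting field of $f$.

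First, by expanding
\[
G_{p-3}(X,1,1)=\sum_{i+j+k=p-3}X^i\cdot 1^j\cdot 1^k=\sum_{k=0}^{p-3}(p-2-k)X^k,
\]
I would check that this polynomial coincides term-by-term with $f(X)=X^{p-3}+2X^{p-4}+\cdots+(p-3)X+(p-2)$. Next, I would pick a convenient representative of $\Omega_\theta$: by Lemma \ref{lem18oct} the transposition $g=(X_{p-2}X_{p-1})\in G$ has a fixed point in $V$, which by Lemma \ref{lem11octA} can be normalised as $P=(\xi_1:\xi_2:\cdots:\xi_{p-3}:1:1)$, with the $\xi_i$ pairwise distinct, non-zero, and different from $1$ by Lemma \ref{lem11octB}.

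To show $P\in V(\mathbb{F}_{p^j})$, I would apply, for each $i\in\{1,\ldots,p-4\}$, the transposition $\sigma_i=(X_iX_{p-3})\in G$ to $P$, obtaining a point $P_i\in V\cap\Omega_\theta$ whose last three coordinates are $(\xi_i,1,1)$. By Proposition \ref{pro5mar} the projection $\pi:(x_1:\cdots:x_{p-1})\mapsto(x_{p-3}:x_{p-2}:x_{p-1})$ realises $\mathcal{X}$ as $V/G_{m,m-1,m-2}$, so $\pi(P_i)=(\xi_i:1:1)\in\mathcal{X}$; the case $i=p-3$ is handled by $P$ itself, whose image under $\pi$ is $(\xi_{p-3}:1:1)$. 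This forces $f(\xi_i)=G_{p-3}(\xi_i,1,1)=0$ for every $i=1,\ldots,p-3$. Since the $\xi_i$ are $p-3$ pairwise distinct roots of $f$ and $\deg f=p-3$, they are \emph{all} the roots of $f$. Consequently $\xi_1,\ldots,\xi_{p-3}$ lie in the splitting field $\mathbb{F}_{p^j}$, so $P\in V(\mathbb{F}_{p^j})$; because $G$ is $\mathbb{F}_p$-rational and acts transitively on $\Omega_\theta$, this gives $\Omega_\theta\subseteq V(\mathbb{F}_{p^j})$.

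For the minimality, suppose $\Omega_\theta\subseteq V(\mathbb{F}_{p^i})$ for some $i$. Then $\xi_1,\ldots,\xi_{p-3}\in\mathbb{F}_{p^i}$, so the splitting field $\mathbb{F}_{p^j}$ of $f$ is contained in $\mathbb{F}_{p^i}$; hence $j\mid i$ and in particular $j\le i$, ruling out $i<j$. The main obstacle in the argument is the initial identification $f(X)=G_{p-3}(X,1,1)$: once this link between the polynomial $f$ of the statement and the plane model of $\mathcal{X}$ is spotted, the remainder follows mechanically from the $\mathbb{F}_p$-rationality of $G$, the $G$-transitivity on $\Omega_\theta$, and the numerical coincidence $\#\{\xi_1,\ldots,\xi_{p-3}\}=p-3=\deg f$.
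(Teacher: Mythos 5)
Your proof is correct and is essentially the argument the paper intends: the Proposition is stated as a corollary of Theorem \ref{the24jun} and Lemma \ref{lem22jun21}, and the key step in both is precisely your identification $f(X)=G_{p-3}(X,1,1)$ together with the observation that the $p-3$ distinct coordinates $\xi_1,\dots,\xi_{p-3}$ of a normalised point of $\Omega_\theta$ project to points $(\xi_i:1:1)$ of the plane model $\mathcal{X}$ and hence exhaust the roots of $f$. Your substitution of the explicit $G$-transitivity/$\mathbb{F}_p$-rationality argument for the paper's appeal to the Frobenius-conjugacy statement of Lemma \ref{lem22jun21} is only a cosmetic difference.
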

The proof of the following theorem relies on Proposition \ref{lem24jun21}.
\begin{theorem}
\label{the181021}
Let $d$ be the smallest positive  integer such that $(p-2)\mid (p^d-1)$. Then $\Omega_\theta, \Omega_\epsilon \subset V(\mathbb{F}_{p^d})$ but $\Omega_\theta, \Omega_\epsilon \nsubseteq  V(\mathbb{F}_{p^i})$ for $i<d$.
\end{theorem}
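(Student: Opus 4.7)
The plan is to reduce the whole statement to a single polynomial identity in $\mathbb{F}_p[X]$ that identifies the splitting field of the polynomial $f$ of Proposition \ref{lem24jun21} with $\mathbb{F}_{p^d}$. For the orbit $\Omega_\varepsilon$, nothing more is needed: Lemma \ref{lem170621} already gives $\Omega_\varepsilon\subset V(\mathbb{F}_{p^d})$ and $\Omega_\varepsilon\nsubseteq V(\mathbb{F}_{p^i})$ for every $i<d$, because $d$ is by definition the multiplicative order of $p$ modulo $p-2$, hence the smallest integer for which $\mathbb{F}_{p^d}$ contains a primitive $(p-2)$-th root of unity. For $\Omega_\theta$, Proposition \ref{lem24jun21} places $\Omega_\theta$ in $V(\mathbb{F}_{p^j})$ (and in no smaller power), where $\mathbb{F}_{p^j}$ is the splitting field over $\mathbb{F}_p$ of
\[
f(X)=X^{p-3}+2X^{p-4}+\cdots+(p-3)X+(p-2).
\]
Therefore the theorem will follow once we prove $j=d$.

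The heart of the argument is the closed formula
\[
X\,f(X)\equiv (X-1)^{p-2}+1\pmod p.
\]
I would derive it in two short steps. First, a telescoping computation shows $(X-1)f(X)=X^{p-2}+X^{p-3}+\cdots+X+(2-p)$, and a further multiplication by $X-1$ gives $(X-1)^2 f(X)=X^{p-1}+(1-p)X+(p-2)$, which modulo $p$ equals $X^{p-1}+X-2$. Second, writing $Y=X-1$ and using the Frobenius identity $(Y+1)^p\equiv Y^p+1\pmod p$, one obtains $(Y+1)^{p-1}=(Y^p+1)/(Y+1)$, whence
\[
X^{p-1}+X-2=(Y+1)^{p-1}+(Y-1)=\frac{Y^p+Y^2}{Y+1}=\frac{(X-1)^2\,((X-1)^{p-2}+1)}{X}.
\]
Cancelling $(X-1)^2$ yields the claimed identity in $\mathbb{F}_p(X)$, and since $(-1)^{p-2}+1=0$ modulo $p$ the right-hand side is divisible by $X$, so the identity in fact holds in $\mathbb{F}_p[X]$.

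Given this formula, the roots of $f$ are exactly the $X\ne 0$ with $(X-1)^{p-2}=-1$. Setting $Z=1-X$ turns the condition into $Z^{p-2}=1$ with $Z\ne 1$ (the excluded value $X=0$ corresponds to $Z=1$). Hence the roots of $f$ are precisely the elements $1-\zeta$ as $\zeta$ runs through the non-trivial $(p-2)$-th roots of unity in $\overline{\mathbb{F}_p}$. Consequently the splitting field of $f$ over $\mathbb{F}_p$ coincides with the splitting field of $X^{p-2}-1$, which is the smallest $\mathbb{F}_{p^d}$ with $(p-2)\mid p^d-1$. Combined with Lemma \ref{lem170621} and Proposition \ref{lem24jun21}, this proves $j=d$ and hence the theorem.

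The expected main obstacle is the discovery and verification of the closed formula $Xf(X)\equiv(X-1)^{p-2}+1\pmod p$: the combinatorial definition of $f$ does not obviously suggest it, and the clean derivation depends on recognising $(X-1)^2 f(X)$ as $X^{p-1}+X-2$ and then using the Frobenius substitution $Y=X-1$. Once that identity is in hand, the reduction to the cyclotomic structure of $\mathbb{F}_{p^d}$ is essentially automatic.
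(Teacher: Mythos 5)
Your proposal is correct and follows essentially the same route as the paper: both arguments dispose of $\Omega_\epsilon$ via Lemma \ref{lem170621}, reduce $\Omega_\theta$ to Proposition \ref{lem24jun21}, and then show that the splitting field of $f$ equals that of $X^{p-2}-1$ by proving that $f$ is the image of $(X^{p-2}-1)/(X-1)$ under the substitution $X\mapsto 1-X$ (your identity $Xf(X)\equiv (X-1)^{p-2}+1$ is exactly $X\,g(1-X)=(X-1)^{p-2}+1$). The only difference is cosmetic: the paper verifies the identity by binomial expansion and the hockey-stick congruence $\sum_{k=i}^{p-3}\binom{k}{i}=\binom{p-2}{i+1}\equiv (-1)^{i}(p-2-i)\pmod p$, whereas you telescope $(X-1)^2f(X)=X^{p-1}+X-2$ and apply Frobenius, which is an equally valid computation.
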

\begin{proof}
By definition, $\mathbb{F}_{p^d}$ is the smallest extension of $\mathbb{F}_p$ containing a primitive $(p-2)$-th root of unity. So, the claim holds for $\Omega_\epsilon$. To complete the proof, consider the polynomial $$g(X)=\frac{X^{p-2}-1}{X-1}=X^{p-3}+X^{p-4}+\ldots+X+1,$$
whose splitting field is $\mathbb{F}_{p^d}$. We prove that $g(1-X)=f(X)$. Indeed
$$
g(1-X)=\sum_{i=0}^{p-3} \sum_{j=0}^i \binom{i}{j}(-X)^j,
$$
and for $i\in \{0,\ldots,p-3\}$ the coefficient of $X^i$  in $g(1-X)$ is $$(-1)^i\sum_{k=i}^{p-3}\binom{k}{i}=(-1)^i\sum_{k=0}^{p-3-i}\binom{i+k}{i}=(-1)^i\binom{p-2}{p-3-i}=(-1)^i\binom{p-2}{i+1}.$$ Thus,
$$
g(1-X)=\sum_{i=0}^{p-3}\binom{p-2}{i+1}(-1)^i X^i=\sum_{i=0}^{p-3}(p-2-i)X^i=f(X).
$$
Therefore the splitting field of $f(X)$ coincides with the splitting field of $g(X)$ and Proposition \ref{lem24jun21} yields the claim.
\end{proof}

\subsection{Non-classicality and Frobenius non-classicality of $V$}
\label{nc}
For $m=p-1$, Proposition \ref{prop10.03} has the following corollary.
\begin{proposition}
\label{prop10.03A} If $\mathbb{K}$ has characteristic $p$ then $V$ is contained in the Hermitian variety $\cH_{p-3}$  which is the intersection of the hyperplane $\Pi$ of equation $X_1 + X_2 + \ldots + X_{p-1}=0$ with the Hermitian variety $\cH_{p-2}$ of equation $X_1^{p+1} + \ldots+X_{p-1}^{p+1}=0.$
\end{proposition}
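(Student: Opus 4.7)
The plan is to read off the statement as an immediate consequence of Proposition \ref{prop10.03} together with the first equation in system (\ref{sy}). First I observe that the first equation $X_1+X_2+\ldots+X_{m}=0$ of (\ref{sy}) already forces every point of $V$ to lie on the hyperplane $\Pi$ of equation $X_1+X_2+\ldots+X_{p-1}=0$, since we are in the case $m=p-1$. So it only remains to check that every point of $V$ also satisfies $X_1^{p+1}+\ldots+X_{p-1}^{p+1}=0$, that is, lies on $\cH_{p-2}$.

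Now I specialize Proposition \ref{prop10.03} to $m=p-1$. That proposition asserts that every solution of (\ref{sy}) satisfies the extra diagonal equations $\sum_{i=1}^{m}X_i^{k}=0$ for $m+2\le k\le 2m-3$. With $m=p-1$ this range becomes $p+1\le k\le 2p-5$, which is non-empty since $p\ge 7$ gives $m\ge 6\ge 5$. Taking $k=p+1$ (the first value in this range, which corresponds to $k=m+2$) we obtain that every point of $V$ satisfies $X_1^{p+1}+X_2^{p+1}+\ldots+X_{p-1}^{p+1}=0$, which is exactly the defining equation of $\cH_{p-2}$. Combining this with the inclusion $V\subset\Pi$ already noted yields $V\subset\Pi\cap\cH_{p-2}=\cH_{p-3}$, as required.

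There is no real obstacle here: the statement is essentially a repackaging of the first case of the conclusion of Proposition \ref{prop10.03} under the specialization $m=p-1$. The only point to verify is that $k=p+1$ actually falls in the allowed range of exponents supplied by Proposition \ref{prop10.03}, and this is guaranteed by the hypothesis $m=p-1\ge 6$.
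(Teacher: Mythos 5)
Your proof is correct and matches the paper exactly: the paper states this proposition as an immediate corollary of Proposition \ref{prop10.03} specialized to $m=p-1$, where the exponent $k=m+2=p+1$ gives the defining equation of $\cH_{p-2}$, and the containment in $\Pi$ comes from the first equation of (\ref{sy}). No further argument is needed.
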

\begin{lemma}
\label{lem17ag21} For a point $P\in V$, let $\Pi_P$ be the tangent hyperplane to $\cH_{p-3}$ at $P$. Then $I(P,V\cap \Pi_P)\ge p$.
\end{lemma}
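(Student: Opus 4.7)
The plan is to write $\Pi_P$ explicitly by computing the gradient of the defining Hermitian equation in characteristic $p$, and then bound the order of vanishing of its defining linear form along a local uniformizer at $P$ via a short Frobenius computation.

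First, by Proposition \ref{prop10.03A}, $V$ lies in the Hermitian variety $\cH_{p-3}$, which is cut from the hyperplane $\Pi:\ X_1+\cdots+X_{p-1}=0$ by $X_1^{p+1}+\cdots+X_{p-1}^{p+1}=0$. Since $(p+1)X_i^p=X_i^p$ in characteristic $p$, the tangent hyperplane to $\cH_{p-3}$ at $P=(a_1:\cdots:a_{p-1})$ has equation
\begin{equation*}
\Pi_P:\ \sum_{i=1}^{p-1} a_i^p\, X_i=0,
\end{equation*}
interpreted inside $\Pi$. This is a proper hyperplane of $\Pi$ because the gradients $(1,\ldots,1)$ and $(a_1^p,\ldots,a_{p-1}^p)$ are linearly independent at $P$ (otherwise all $a_i$ would coincide, giving $P=(1{:}\cdots{:}1)\notin V$). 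By Theorem \ref{the2502}, $V$ is non-singular, so we may fix a local parameter $t$ at $P$ and expand $x_i(t)=a_i+\sum_{k\ge 1}c_{i,k}t^k$. Setting $f(t):=\sum_i a_i^p x_i(t)$, we have $I(P,V\cap\Pi_P)=\mathrm{ord}_{t=0}f(t)$, provided $\Pi_P$ does not contain $V$ (otherwise the multiplicity is infinite and the claim is trivial).

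The second step exploits the identity $\sum_i x_i(t)^{p+1}\equiv 0$, inherited from $V\subseteq\cH_{p-3}$. Differentiating in $t$ and using $p+1\equiv 1\pmod p$ gives
\begin{equation*}
\sum_{i=1}^{p-1} x_i(t)^p\, x_i'(t)=0.
\end{equation*}
Now I invoke the Frobenius identity in characteristic $p$: since $x_i(t)=\sum_k c_{i,k}t^k$, one has $x_i(t)^p=\sum_k c_{i,k}^p t^{kp}$, so $x_i(t)^p\equiv a_i^p\pmod{t^p}$ for every $i$. Reducing the previous identity modulo $t^p$ therefore yields
\begin{equation*}
f'(t)=\sum_{i=1}^{p-1} a_i^p\, x_i'(t)\equiv 0\pmod{t^p}.
\end{equation*}

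The final step is an elementary ``integration in characteristic $p$'' argument. Writing $f(t)=\sum_{n\ge 0}e_n t^n$, the condition $f'(t)\equiv 0\pmod{t^p}$ forces $n e_n=0$ for $1\le n\le p$; since $n\not\equiv 0\pmod p$ for $1\le n\le p-1$, we get $e_1=\cdots=e_{p-1}=0$. Combined with $e_0=f(0)=\sum_i a_i^{p+1}=0$, this gives $\mathrm{ord}_{t=0}f\ge p$, as desired. The only subtle point is precisely that in characteristic $p$ the derivative determines $f$ only up to a constant plus a pure $t^p$-term, which is exactly what allows (and forces) the order-$p$ lower bound rather than $p+1$; conceptually this is where the non-classical behaviour of $V$ originates.
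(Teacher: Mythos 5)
Your proof is correct and follows essentially the same route as the paper: both rest on the containment $V\subseteq\cH_{p-2}$ from Proposition \ref{prop10.03A} and on the Frobenius congruence $x_i(t)^p\equiv a_i^p\pmod{t^p}$ along the branch of $V$ centered at $P$. The paper obtains $\sum_i a_i^p x_i(t)+1\equiv 0\pmod{t^p}$ directly by substituting this congruence into $\sum_i x_i(t)^{p+1}+1=0$, whereas you take a harmless detour through $f'(t)\equiv 0\pmod{t^p}$ and re-integrate; the conclusion is identical.
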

\begin{proof} Clearly, $\Pi_P$ is the intersection of the hyperplane $\Pi$ with the tangent hyperplane $\alpha_P$ to the Hermitian variety $\cH_{p-2}$ at $P$. Since $V$ is contained in $\Pi$, $I(P,V\cap \Pi_P)=I(P,V\cap \alpha_P)$ holds. Hence, it is enough to show that $I(P,V\cap \alpha_P)$ is at least $p$.

Let $P=(\xi_1:\cdots:\xi_{p-2}:\xi_{p-1}).$
Up to a change of coordinates, $\xi_{p-1}=1$ may be assumed. In the affine space ${\rm{AG}}(p-2,\mathbb{K})$ with infinite hyperplane $X_{p-1}=0$, $\cH_{p-2}$ has equation $X_1^{p+1}+X_2^{p+1}+\ldots + X_{p-2}^{p+1}+1=0$.  For every $i=1,\ldots,p-2,$ let    $x_i(t)=\xi_i+\rho_i(t)$ with $x_i(t),\rho_i(t)\in \mathbb{K}[[t]]$ and ${\rm{ord}}(\rho_i(t))\ge 1$  be a primitive representation of the unique branch of $V$ centered at $P$.
By Proposition \ref{prop10.03A}, $V$ is contained in $\cH$. Therefore,
$x_1(t)^{p+1}  +  x_2(t)^{p+1}  +  \ldots+x_{p-2}(t)^{p+1}+1$
vanishes in $\mathbb{K}[[t]].$ From this
$$\xi_1^p(\xi_1+\rho_1(t))+\xi_2^p(\xi_2+\rho_2(t))+\ldots +\xi_{p-2}^p(\xi_{p-2}+\rho_{p-2}(t))+1=t^pv(t),\,\,v(t)\in \mathbb{K}[[t]],{\rm{ord}}(v(t))\ge 0.$$
Since $\Pi_P$ has equation $\xi_1^{p}X_1+\xi_2^{p}X_2+\ldots +\xi_{p-2}^{p}X_{p-2}+1=0$ in ${\rm{AG}}(p-2,\mathbb{K})$, the claim follows.
\end{proof}
Since the dimension of ${\rm{PG}}(p-2,\mathbb{K})$ is smaller than $p$, Lemma \ref{lem17ag21} has the following corollary.
\begin{theorem}
\label{the17ag21} $V$ is a non-classical curve.
\end{theorem}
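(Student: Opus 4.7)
The plan is to derive Theorem \ref{the17ag21} as a straightforward consequence of Lemma \ref{lem17ag21} together with the definition of classicality recalled in Section \ref{back}. Recall that the curve $V$ lies in the hyperplane $X_1+\cdots+X_{p-1}=0$ and hence is embedded in a projective space of dimension $r=p-3$. By definition, $V$ is classical if and only if its order-sequence at a generically chosen point is $(\varepsilon_0,\varepsilon_1,\ldots,\varepsilon_r)=(0,1,2,\ldots,p-3)$; in particular the largest generic order $\varepsilon_r$ equals $p-3$.

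First I would invoke Lemma \ref{lem17ag21}, which applies to \emph{every} point $P\in V$: the tangent hyperplane $\Pi_P$ to $\cH_{p-3}$ at $P$ satisfies $I(P,V\cap\Pi_P)\ge p$. By the discussion preceding Theorem \ref{the17ag21}, an integer $j$ is a $(\Sigma,P)$-order as soon as some hyperplane cuts $V$ at $P$ with multiplicity exactly $j$; consequently the order-sequence $j_0(P)<j_1(P)<\cdots<j_r(P)$ at any $P\in V$ must contain an integer $j_i(P)\ge p$, and therefore $j_r(P)\ge p$.

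Since $j_r(P)\ge \varepsilon_r$ holds pointwise and the generic order-sequence agrees with the one at all but finitely many points, this forces $\varepsilon_r\ge p$. Combined with $r=p-3<p\le \varepsilon_r$, the generic order-sequence of $V$ cannot be $(0,1,\ldots,p-3)$. By the definition of classicality, $V$ is non-classical, completing the proof.

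There is essentially no obstacle at this stage: the substantial work has already been carried out in Lemma \ref{lem17ag21}, which produced a concrete hyperplane of high contact (obtained as the restriction to the hyperplane $\Pi$ of the tangent hyperplane to the Hermitian variety $\cH_{p-2}$, exploiting the relation $X_i^{p+1}=X_i\cdot X_i^p$ and Proposition \ref{prop10.03A}). The only point requiring a brief justification is the implication ``$I(P,V\cap\Pi_P)\ge p$ at every point $\Rightarrow$ $\varepsilon_r\ge p$'', which follows at once from the chain $\overline{\Pi}_0\supset\overline{\Pi}_1\supset\cdots$ introduced in Section \ref{back} and the fact that $\overline{\Pi}_p$ is non-empty for every $P\in V$.
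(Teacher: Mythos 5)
Your proposal is correct and follows exactly the paper's route: the paper derives Theorem \ref{the17ag21} as an immediate corollary of Lemma \ref{lem17ag21}, noting only that the ambient projective space has dimension smaller than $p$, so a generic order $\ge p$ is incompatible with the classical order-sequence $(0,1,\ldots,r)$. Your write-up merely spells out the intermediate step about generic versus pointwise order-sequences, which the paper leaves implicit.
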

\begin{theorem}
\label{the191021} $V$ is a Frobenius non-classical curve.
\end{theorem}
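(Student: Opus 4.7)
My plan is to apply the St\"ohr--Voloch Wronskian criterion recalled in Section \ref{back}. Setting $r=p-3$ (the dimension of the projective space $\Pi\cong\PG(p-3,\mathbb{K})$ into which $V$ is embedded), $V$ is Frobenius non-classical if and only if, at a generic point of $V$, the Wronskian determinant $\det W_\zeta^{0,1,\dots,r-1}(x_0,\dots,x_r)$ associated with the classical derivative orders $(0,1,\dots,r-1)$ vanishes. My strategy is to exhibit a single nonzero linear form that simultaneously annihilates the Frobenius row and every Hasse-derivative row occurring in this Wronskian. The candidate is the linear form defining the hyperplane $\Pi_P$ of Lemma \ref{lem17ag21}.

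Fix a generic point $P=(\xi_1:\cdots:\xi_{p-1})\in V$, a local parameter $t$ at $P$, and a primitive local parametrization $x_i(t)$ of $V$ with $x_i(0)=\xi_i$ for $i=1,\dots,p-1$. Lemma \ref{lem17ag21} furnishes the hyperplane $\Pi_P$ with equation $\sum_{i=1}^{p-1}\xi_i^p X_i=0$ and $I(P,V\cap\Pi_P)\ge p$. Unpacking this contact condition gives $\sum_{i=1}^{p-1}\xi_i^p\,x_i(t)\equiv 0\pmod{t^p}$, whence
\begin{equation*}
\sum_{i=1}^{p-1}\xi_i^p\,D_t^{k}x_i(0)=0\quad\text{for every }0\le k\le p-1,
\end{equation*}
in particular for the classical orders $k=0,1,\dots,r-1=p-4<p$. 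Hence the nonzero coefficient vector $\mathbf{a}=(\xi_1^p,\dots,\xi_{p-1}^p)$ kills every Hasse-derivative row of the relevant Wronskian.

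It remains to check that $\mathbf{a}$ also annihilates the Frobenius row $\Phi(P)=(\xi_1^p,\dots,\xi_{p-1}^p)$, namely
\begin{equation*}
\sum_{i=1}^{p-1}\xi_i^p\cdot\xi_i^p\;=\;\Bigl(\sum_{i=1}^{p-1}\xi_i^2\Bigr)^p\;=\;0,
\end{equation*}
which holds precisely because $P$ satisfies the second equation of \eqref{sy}. Thus $\mathbf{a}$ annihilates all $r+1$ rows of the Frobenius-augmented Wronskian, so its determinant vanishes at the generic point of $V$. Consequently some Frobenius order exceeds its classical value and $V$ is Frobenius non-classical.

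The only conceptual point is the double role of $\Pi_P$: it both realizes high-order contact with $V$ (used in Lemma \ref{lem17ag21} and Theorem \ref{the17ag21}) and contains the Frobenius image $\Phi(P)$, because the defining identity $\sum\xi_i^2=0$ for $V$ promotes under $p$-th powering to the tangency condition of the ambient Hermitian variety $\cH_{p-2}$ at $P$. Once this dual role is recognized, the Wronskian vanishing is formal and no further computation is required.
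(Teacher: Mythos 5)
Your proof is correct and essentially coincides with the paper's: both combine Lemma \ref{lem17ag21} (the tangent hyperplane $\Pi_P$ of the ambient Hermitian variety has contact at least $p$ with $V$ at $P$) with the identity $\sum\xi_i^{2p}=(\sum\xi_i^2)^p=0$ showing $\Phi(P)\in\Pi_P$, the paper packaging this as a contradiction via the osculating hyperplane while you feed the same two facts directly into the Frobenius Wronskian. The only detail left tacit is that $\sum\xi_i^pX_i$ restricts to a nonzero linear form on the hyperplane $\sum X_i=0$ in which $V$ is nondegenerately embedded (true, since $\Pi_P\cap V$ is finite), so the $(p-2)\times(p-2)$ Frobenius Wronskian taken there is indeed the one your vector annihilates.
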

\begin{proof} Assume on the contrary that $V$ is Frobenius classical. Then $0,1,\ldots,p-4$ are orders at a generically chosen point of $V$. Lemma \ref{lem17ag21} yields that the last order, $\varepsilon_{p-3}$, is equal to $p$. Therefore, Lemma \ref{lem17ag21} also yields that the osculating tangent hyperplane to $V$ at $P$ coincides with the tangent hyperplane $\Pi$ to Hermitian variety $\cH_p$. Since $\Pi$ passes through the Frobenius image of $P$, it follows that
$V$ is Frobenius non-classical, a contradiction.
\end{proof}

\subsection{Some results on the orders of $V$ at $\mathbb{F}_p$-rational points}
\label{secc1}
Since $G$ is transitive on $V(\mathbb{F}_p)$, the orders of $V$ are the same at every point $P\in V(\mathbb{F}_p)$. From Lemma \ref{lem1210}, such a point is $P=(1:\eta^{p-2}:\eta^{p-3}:\dots:\eta)$ for a primitive element $\eta$ of $\mathbb{F}_p$. From Lemma \ref{lem10octAchar0}, the stabilizer of $P$ in $G$ is a cyclic group of order $p-1$ generated by the projectivity $\sigma$ associated with the matrix
\begin{equation}\label{circulant}
M_\sigma=
\begin{pmatrix}
0 & 1 & 0 & &\cdots & 0\\
0 & 0 & 1 & &\cdots & 0\\
 &  &  & \ddots &  & \\
\vdots& & &\ddots & &\\
0 & 0 & 0 &\cdots & 0 & 1\\
1 & 0 & 0 &\cdots & 0 & 0\\
\end{pmatrix}.
\end{equation}
The eigenvalues of $M_\sigma$ are $\lambda_i=\eta^i$ for $i=0,\dots,p-2$. Moreover, then eigenvectors of $M_\sigma$ are ${\bf{w}}_i=(1,\eta^i,\eta^{2i},\dots,\eta^{(p-2)i})$. Thus the point of ${\rm{PG}}(p-2,\mathbb{F}_p)$  represented by ${\bf{w}}_i$  is in $V$ if and only if ${\rm{g.c.d.}}(i,p-1)=1$.  Therefore, $\sigma$ has as many as $\varphi(p-1)$ fixed points on $V$.

Let $\eta_i=\eta^{i}$ for $i=0,\ldots,p-2$, and consider the change of the projective frame from $(X_1:X_2:\ldots:X_{p-1})$ to $(Y_1:Y_2:\ldots:Y_{p-1})$ defined by
\begin{equation}\label{varchange}
\begin{cases}
X_1=Y_1+Y_2+\cdots+Y_{p-1};\\
X_2=Y_1+\eta_1Y_2+\eta_1^2Y_3\cdots+\eta_{1}^{p-2}Y_{p-1};\\
X_3=Y_1+\eta_2Y_2+\cdots+\eta_{2}^{p-2}Y_{p-1};\\
\vdots\\
X_{p-1}=Y_1+\eta_{p-2}Y_2+\cdots+\eta_{p-2}^{p-2}Y_{p-1}.
\end{cases}
\end{equation}
With this transformation, $P$ is taken to the fundamental point  $O=(0:0:\cdots:0 :1)$.
Let $R$ be the matrix whose rows are the vectors ${\bf{w}}_i$, $i=1,\dots,p-1$. Then $R^{-1}M_\sigma R$ is the diagonal matrix
\begin{equation*}
D=\begin{pmatrix}
\eta &0&&\cdots&0\\
0&\eta^2&0&\cdots&0\\
\vdots& &&\ddots &\\
0&\cdots&& 0&\eta^{2u}
\end{pmatrix},
\end{equation*}
where $2u=p-1$, and hence it is the matrix associated to $\sigma$ in the projective frame $(Y_1:Y_2:\ldots:Y_{p-1})$. Replacing $X_i$ by (\ref{varchange}) in \eqref{sy}, we obtain the equations of $V$ in the projective frame $(Y_1:Y_2:\ldots:Y_{p-1})$.
Now, we explicitly write down the equations defining $V$ in the projective frame $(Y_1:Y_2:\ldots:Y_{p-1})$. From the first equation in \eqref{sy}, we obtain $Y_1=0$. In fact, $\sum_{i=1}^{p-1}X_i=(p-1)Y_1+Y_2\sum_{j=0}^{p-2}\eta_1^j+\cdots+Y_{p-1}\sum_{j=0}^{p-2}\eta_{p-2}^j,$ and for $i=1\dots,p-2$ we have $\sum_{j=0}^{p-2}\eta_{i}^j=(\eta_i^{p-1}-1)/(\eta_i-1)=0$.
Therefore, $V$ is a projective variety of ${\rm{PG}}(p-3,\mathbb{F}_p)$ with projective frame $(Y_2:Y_3:\cdots :Y_{p-1})$.

Since $O$ is off the hyperplane of homogenous equation $Y_{p-1}=0$, a branch representation of the unique branch $\gamma$ of $V$ centered at $O$ has as components $y_2=y_2(t),\dots,y_{p-2}=y_{p-2}(t),y_{p-1}=1$. Here $y_i(t)\in \mathbb{F}_p[[t]]$. Furthermore, we can assume $ord(y_{p-2}(t))=1$, since $O$ is a simple point of $V$ and the tangent hyperplane to $V$ at $O$ does not contain the line of homogeneous equation $Y_2=0,\ldots, Y_{p-3}=0$.
Therefore,
\begin{equation}\label{ramo}
\begin{cases}
&y_2(t)= \alpha_{2,1}t+\alpha_{2,2}t^2+\dots\\
&y_3(t)= \alpha_{3,1}t+\alpha_{3,2}t^2+\dots\\
&\vdots \\
&y_{p-3}(t)=\alpha_{p-3,1}t+\alpha_{p-3,2}t^2+\dots\\
&y_{p-2}(t)=t\\
&y_{p-1}(t)=1.
\end{cases}
\end{equation}
Since the projectivity $\sigma$ preserves $V$ and fixes $O$, it also preserves $\gamma$. Moreover, $\sigma$ is associated to the above diagonal matrix $D$. This yields that an equivalent branch representation is given by
\begin{equation}\label{ramo2}
\begin{cases}
&\bar{y}_1(t)=0\\
&\bar{y}_2(t)=\eta^2( \alpha_{2,1}t+\alpha_{2,2}t^2+\dots)\\
&\bar{y}_3(t)=\eta^3( \alpha_{3,1}t+\alpha_{3,2}t^2+\dots)\\
&\vdots \\
&\bar{y}_{p-3}(t)=\eta^{p-3}(\alpha_{p-3,1}t+\alpha_{p-3,2}t^2+\dots)\\
&\bar{y}_{p-2}(t)=\eta^{p-2}t=\eta^{-1}t\\
&\bar{y}_{p-1}(t)=1.
\end{cases}
\end{equation}
Replacing $t$ by $\eta^{-1}t$, the equations of \eqref{ramo2} become
\begin{equation}\label{ramo3}
\begin{cases}
&\bar{y}_1(t)=0\\
&\bar{y}_2(t)=\eta^2( \eta\alpha_{2,1}t+\eta^2\alpha_{2,2}t^2+\dots)\\
&\bar{y}_3(t)=\eta^3( \eta\alpha_{3,1}t+\eta^2\alpha_{3,2}t^2+\dots)\\
&\vdots \\
&\bar{y}_{p-3}(t)=\eta^{p-3}(\eta\alpha_{p-3,1}t+\eta^2\alpha_{p-3,2}t^2+\dots)\\
&\bar{y}_{p-2}(t)=t.\\
\end{cases}
\end{equation}
Therefore \eqref{ramo} and \eqref{ramo3} are the same branch representation of $\gamma$, whence
\begin{equation*}
\begin{cases}
&\eta^3\alpha_{2,1}=\alpha_{2,1};\\
&\vdots\\
&\eta^{2+i}\alpha_{2,i}=\alpha_{2,i};\\
&\vdots\\
&\eta^{2u-1}\alpha_{2u-1,1}=\alpha_{2u-1,1};\\
&\vdots\\
&\eta^{2u-2+i}\alpha_{2u-2,i}=\alpha_{2u-2,i};\\
&\vdots\\
\end{cases}
\end{equation*}
From this $\alpha_{2,i}=0$ for $i< 2u-2$. More generally, $\alpha_{k,i}=0$ for $2\leq k\leq p-3$ and $i<p-1-k$.
Therefore, $ord(y_k(t))\geq p-1-k$ for $2\leq k\leq p-3$. Moreover, the only coefficients $\alpha_{k,i}\neq 0$ are among those verifying $i+k\equiv 0\pmod{p-1}$, where $2\leq k\leq p-3$ and $i\geq 1$.
Thus, the branch representation is
\begin{equation}\label{ramo3bis}
\begin{cases}
&y_2(t)= \alpha_{2,p-3}t^{p-3}+\alpha_{2,2p-4}t^{2p-4}+\dots+\alpha_{2,w(p-1)-2}t^{w(p-1)-2}+\dots\\
&y_3(t)= \alpha_{3,p-4}t^{p-4}+\alpha_{3,4u-3}t^{4u-3}+\dots+\alpha_{3,w(p-1)-3}t^{w(p-1)-3}+\dots\\
&\vdots \\
&y_{p-4}(t)=\alpha_{p-4,3}t^3+\alpha_{p-4,p+2}t^{p+2}+\dots+\alpha_{p-4,w(p-1)+3}t^{w(p-1)+3}+\dots\\
&y_{p-3}(t)=\alpha_{p-3,2}t^2+\alpha_{p-3,p+1}t^{p+1}+\dots+\alpha_{p-3,w(p-1)+2}t^{w(p-1)+2}+\dots\\
&y_{p-2}(t)=t\\
&y_{p-1}(t)=1.
\end{cases}
\end{equation}
We now show how to compute the remaining orders.\\
Recall that, in the new coordinates, the $k$-th equation in \eqref{sy} reads
\begin{equation}\label{nuoveEquazioni}
\sum_{j=1}^{p-1}\bigg(\sum_{i=2}^{p-2}\eta_{j-1}^{i-1}Y_i+\eta_{j-1}^{p-2}Y_{p-1}\bigg)^k,
\end{equation}
and that by the multinomial theorem
\begin{equation*}
\bigg(\sum_{i=2}^{p-2}s_i+v\bigg)^k=\sum_{k_2+\dots+k_{p-2}+l=k}\binom{k}{k_2,\dots,k_{p-2},l}\prod_{w=2}^{p-2}s_w^{k_w}\cdot v^l.
\end{equation*}

Replacing $Y_{i}$ by $y_i(t)=\sum_{s=1}^\infty\alpha_{i,(p-1)s-i}t^{(p-1)s-i}$ in the $k$-th equation, it is obtained
\begin{equation}\label{eqramo}
\begin{split}
&\sum_{j=1}^{p-1}\bigg(\sum_{i=2}^{p-2}\eta_{j-1}^{i-1}\big(\sum_{s=1}^\infty\alpha_{(p-1)s-i,i}t^{(p-1)s-i}\big)+\eta_{j-1}^{p-2}\bigg)^k=0.
\end{split}
\end{equation}

First, $ord(y_{p-3}(t))=2$, and, more precisely, $\alpha_{p-3,2}=2\neq 0$. This can be observed by looking at the quadratic term in $t$ in the $(p-3)$-th equation, after the substitution $Y_i=y_i(t)$.


By Proposition \ref{prop10.03}, $k\in\{1,2,\dots,p-3\}\cup\{p+1,p+2,\dots,2p-5\}$.\\
In order to compute the orders of $y_i(t)$, with $i<p-1-2$, let $k\in \{p+1,p+2,\dots,2p-5\}$, and write $k=2p-2-\tilde{k}$, $\tilde{k}\in\{3,\dots,p-3\}$.
\begin{rem}\label{obs1}
For any fixed $p$, the coefficients $\alpha_{p-1-\tilde{k},\tilde{k}}$, for $\tilde{k}\in\{3,\dots,p-3\}$, can be computed by taking into account the following constraints.
\end{rem}

The expansion of \eqref{eqramo} is of the form $$\sum_{j=1}^{p-1}\sum_{k_2+\cdots+k_{p-2}+l}\binom{k}{k_2,\dots,k_{p-2},l}\eta_{j-i}^{(2p-2)l}\prod_{i=2}^{p-2}\eta_{j-1}^{(i-1)k_i}t^{(p-1-i)k_i}.$$

Here, $k_i\leq \tilde{k}$ for terms of degree $\tilde{k}$ in $t$.
Moreover, since $k>p$  and
$$\binom{k}{k_1,k_2,\dots,k_{p-1-1},l}=\frac{(k)!}{k_1!k_2!\cdots k_{2u-1}!l!};$$ we see that $l\geq p$ is a necessary condition in order to have a non-zero multinomial coefficient. Further, $l+\sum_{i=1}^{p-2}k_i=\tilde{k}$, and each term of degree $\tilde{k}$ corresponds to a choice of the indices such that $\sum(p-1-i)k_i=\tilde{k}$.

Moreover, since for $u\neq 0
\pmod{p-1}$,
\begin{equation*}
\sum_{j=1}^{2u}\bigg(\eta^{j-1}_{u}\bigg)=
\sum_{j=0}^{2u-1}\eta^j_{u}=
\frac{\eta^{2u}_{u}-1}{\eta_{u}-1}=0,
\end{equation*}
every non-zero term must satisfy $\sum(i-1)k_i\equiv l\pmod{p-1}$.\\
To illustrate Remark \ref{obs1}, we show that $\alpha_{p-1-3,3}$
is non-zero, whereas $\alpha_{2,p-1-2}=0$.\\
\begin{itemize}
\item
Let $\tilde{k}=3$, $k=2p-5$. In the $k$-th equation, the terms of degree $\tilde{k}$ in $t$ are precisely the three terms corresponding to the choices $s=1$, $i=p-1-1$, $k_i=3$ and $l=2p-5-3$ (for $j\neq i$ it will be $k_j=0$), or $s=1$, $i=p-1-1$, $k_i=1$ and $l=2p-5-1$(for $j\neq i$, $k_j=0$), and $s=1$, $i_1=p-1-1$, $k_{i_1}=1$, $i_2=p-1-2$, $k_{i_2}=1$, $i_2=1$ and $l=2p-5-2$(for $j\neq i$, $k_j=0$).
$$\binom{2p-5}{k_2,\dots,k_{2u-1},l}=\frac{(2p-5)!}{k_2!\cdots k_{2u-1}!l!}$$
Therefore, the term of degree $3$ in the $(2p-5)$-th equation is
$$ \Bigg[ \binom{2p-5}{1,1,0,\dots,0,2p-5-2}\alpha_{p-1-2,2}+\binom{2p-5}{3,0,0\dots,0,2p-5-3}+\binom{2p-5}{0,0,1,0\dots,0,2p-5-1}\alpha_{p-1-3,3}\Bigg]t^3=$$
$$
=\Bigg[ \alpha_{p-1-2,2}\cdot (-5)\cdot (-6)+(-1)(-5)(-7)-5\alpha_{p-1-3,3}\Bigg]t^3
$$
Since $\alpha_{p-3,3}=2$, it follows $\alpha_{p-1-3,3}=5\neq 0$.

\item Let $\tilde{k}=p-3$, $k=(p+1)$.
Observe that, in this case, the factors in the multinomial theorem are of the form $\binom{p+1}{k_1,k_2,\dots,k_{2u-1},l}$ with $k_1+k_2+\dots+k_{2u-1}+l=p+1$. Also, the only non-zero coefficients are those with $k_i=p$ or $k_i=p+1$ for some $i$. Moreover, for $u\neq 0
\pmod{p-1}$,
\begin{equation*}
\sum_{j=1}^{2u}\bigg(\eta^{j-1}_{u}\bigg)=
\sum_{j=0}^{2u-1}\eta^j_{u}=
\frac{\eta^{2u}_{u}-1}{\eta_{u}-1}=0.
\end{equation*}
Therefore, for $k=p+1$, the possibly non-vanishing terms in equation \eqref{eqramo}, are those obtained for $i+\tilde{i}\equiv 2\pmod{2u}$, taking $\eta_{j-1}^{i-1}y_i(t)$ with multiplicity $p$ and $\eta_{j-1}^{\tilde{i}-1}y_{\tilde{i}}(t)$ with multiplicity $1$ or taking $\eta_{j-1}^{i-1}y_i(t)$ with multiplicity $1$ and $\eta_{j-1}^{\tilde{i}-1}y_{\tilde{i}}(t)$ with multiplicity $p$, and those
obtained for $i=m+1$ and $\eta_{j-1}^{i-1}y_2(t)$ with multiplicity $p+1$.\\
Therefore, the term of degree $2u-2$ in $t$, can only be obtained by taking $\eta_{j-1}^{2-1}y_2(t)$ with multiplicity $1$ and $\eta_{j-1}^{2u-1}$ with multiplicity $p$, namely it is $-\alpha_{2,2u-2}t^{2u-2}$, and therefore $\alpha_{2,2u-2}=0$. As a consequence, $ord(y_2(t))\geq 2p-4$.

\end{itemize}

\begin{proposition}
\label{pro211021} Each of the integers $1,2,3$ are intersection multiplicities $I(P,V\cap \pi)$ at any $\mathbb{F}_p$-rational point. Furthermore, the last order is at least $2p-4$.
\end{proposition}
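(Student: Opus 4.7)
The plan is to read off every intersection multiplicity $I(P,V\cap\pi)$ directly from the branch representation \eqref{ramo3bis} of the unique branch $\gamma$ of $V$ centered at $O=(0:\cdots:0:1)$, which is the image of the generic $\mathbb{F}_p$-rational point $P$ under the change of coordinates \eqref{varchange}. Since $O$ has all coordinates vanishing except $Y_{p-1}$, a hyperplane $\pi$ through $O$ has equation $\sum_{i=2}^{p-2}c_iY_i=0$, and
\[
I(O,V\cap \pi)\;=\;\ord\Bigl(\sum_{i=2}^{p-2} c_i\,y_i(t)\Bigr).
\]
Thus orders at $P$ correspond bijectively to the possible orders of such linear combinations of the series $y_2(t),\ldots,y_{p-2}(t)$, and the final order is the maximum such value, attained by the osculating hyperplane.

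To exhibit $1,2,3$ as orders, I will make three successive choices of coefficients. Taking any $\pi$ with $c_{p-2}\neq 0$ gives $I(O,V\cap \pi)=\ord(y_{p-2}(t))=1$ because $y_{p-2}(t)=t$. Taking $c_{p-2}=0$ and $c_{p-3}\neq 0$ gives intersection multiplicity equal to $\ord(y_{p-3}(t))=2$, which uses the non-vanishing $\alpha_{p-3,2}=2\neq 0$ established by inspection of the quadratic coefficient of the $(p-3)$-rd equation of \eqref{sy} after substituting the branch. Finally, taking $c_{p-2}=c_{p-3}=0$ and $c_{p-4}\neq 0$ gives $\ord(y_{p-4}(t))=3$, since $\alpha_{p-4,3}=5\neq 0$ as computed in the first bullet immediately preceding the proposition (using $\tilde{k}=3$, $k=2p-5$, and the known value $\alpha_{p-3,2}=2$).

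For the last claim, I will test the single hyperplane $Y_2=0$, which passes through $O$. Its intersection multiplicity at $O$ equals $\ord(y_2(t))$. From \eqref{ramo3bis} the expansion is
\[
y_2(t)=\alpha_{2,p-3}\,t^{p-3}+\alpha_{2,2p-4}\,t^{2p-4}+\cdots ,
\]
so it suffices to show $\alpha_{2,p-3}=0$. This is exactly the content of the second bullet preceding the proposition: reading off the coefficient of $t^{p-3}$ in the $k=p+1$ equation of \eqref{sy} via the multinomial theorem, the only surviving monomial is the one with $\eta_{j-1}^{2-1}y_2(t)$ of multiplicity $1$ and $\eta_{j-1}^{2u-1}$ of multiplicity $p$, whose cumulative contribution forces $\alpha_{2,p-3}=0$. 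Hence $\ord(y_2(t))\ge 2p-4$, and the last order is at least $2p-4$.

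The main obstacle in this argument is not logical but computational: it lies in the multinomial bookkeeping needed to verify $\alpha_{p-4,3}\neq 0$ and $\alpha_{2,p-3}=0$. These are precisely the two calculations carried out in the bullets before the proposition, so the proof itself reduces to assembling those observations together with the trivial choices that realize $1$ and $2$ as orders.
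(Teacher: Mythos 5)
Your proposal is correct and follows essentially the same route as the paper: the proposition is stated as a summary of the computation immediately preceding it, and your argument simply assembles those facts --- $y_{p-2}(t)=t$, $\alpha_{p-3,2}=2\neq 0$, $\alpha_{p-4,3}=5\neq 0$, and $\alpha_{2,p-3}=0$ --- by matching coordinate hyperplanes through $O$ with the orders of the corresponding branch components. The translation of intersection multiplicities into orders of linear combinations $\sum_i c_i y_i(t)$ is exactly the intended reading, so nothing is missing.
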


\subsection{Case $p=7$}
From Section \ref{secc1}, $y_1(t)=0$, $y_5(t)=t$, and $ord(y_4(t))=2$, with $\alpha_{4,2}=2$, that is $y_4(t)=2t^2+\alpha_{4,4}t^4+\cdots$. The second equation reads
$$-y_3^3+y_2y_3y_4+4y_2^2y_5-y_5^3+y_4y_5+4y_3=0 ;$$
whence $\alpha_{3,3}=5$, that is $y_3(t)=5t^3+\alpha_{3,6}t^6+\cdots$. The eighth equation reads
$$y_2^7+y_2+y_3^7y_5+y_3y_5^7+y_4^8=0.$$
Therefore it must be $ord(y_2(t))=10$.\\
Thus, the order sequence of the curve $V$ at the origin is $(0,1,2,3,10)$.
\subsection{Case $p=11$} With the support of MAGMA the first terms of the branch expansions of the $y_i$ can be computed.
\begin{equation*}
\begin{split}
y_1(t)=0;\\
y_2(t)=7t^{18}+\cdots ;\\
y_3(t)=5t^7+6t^{17}+\cdots;\\
y_4(t)=3t^6+2t^{16}+\cdots;\\
y_5(t)=9t^5+t^{15}+\cdots;\\
y_6(t)=3t^4+0t^{14}+\cdots;\\
y_7(t)=5t^3+6t^{13}+\cdots;\\
y_8(t)=2t^2+4t^{12}+\cdots;\\
y_9(t)=t.
\end{split}
\end{equation*}
This shows that the order sequence of $V$ at the origin is $(0,1,2,3,4,5,6,7,18)$.
\section{Maximality of the $p$-characteristic analog of the Bring curve for $m=5$}
\section{The case of positive characteristic; $m=5$}
In this section $m=5$, that is, $V$ is the $p$-characteristic analog of the Bring curve. Therefore, $V$ has genus $4$ and is embedded in $\PG(4,\mathbb{K})$ as the complete intersection of an hyperplane, a quadratic and a cubic surface, both non-singular. The (homogeneous) function field $\mathbb{K}(V)$ is $F=\mathbb{K}(x_1,x_2,x_3,x_4,x_5)$ 
with
 \begin{equation}
\label{syAA}\left\{
\begin{array}{llll}
x_1 + x_2 +x_3+x_4+x_5=0;\\
x_1^2  +  x_2^2 + x_3^2+ x_4^2+ x_5^2=0;\\
x_1^3  +  x_2^3 + x_3^3+ x_4^3+ x_5^3=0.
\end{array}
\right.
\end{equation}
Our goal is to show that $V$ is an $\mathbb{F}_{p^2}$-maximal curve, that is the number of points of $V$ defined over $\mathbb{F}_{p^2}$ attains the Hasse-Weil upper bond $p^2+1+2\mathfrak{g}p=p^2+1+8p$ for infinitely many values of $p$. The essential tool for the proof is the Jacobian variety $J_V$ associated with $V$, with the following characterization of maximal curves due to Tate \cite[Theorem 2(d)]{tate} and explicitly pointed out by Lachaud \cite[Proposition 5]{la}: a curve defined over $\mathbb{F}_p$ of genus $\mathfrak{g}$ is an $\mathbb{F}_{p^2}$-maximal curve if and only if its Jacobian is $\mathbb{F}_{p^2}$-isogenous to the $\mathfrak{g}$-th power of an $\mathbb{F}_{p^2}$-maximal elliptic curve. To determine $J_V$
we use the following Kani-Rosen theorem \cite[Theorem B]{kr} about the decomposition of the Jacobian variety of an algebraic curve with respect to an automorphism group $G$ equipped by a partition, that is, the group $G$ has a family of subgroups $\{H_1, \dots, H_t\}$ such that $G=H_1\cup\cdots\cup H_t$ and $H_i\cap H_j=\{1\}$ for $1\le i<j\le t$.
\begin{theorem}[Kani-Rosen]\label{kanirosen}
Let $G$ be a finite automorphism group of an algebraic curve $\cX$. If $G$ is equipped by a partition, then the following isogeny relation holds:
\begin{equation}\label{kaniroseneq1}
J_{\cX}^{t-1}\times J_{\cX/G}^{|G|}\sim J_{\cX/{H_1}}^{h_1}\times\cdots\times J_{\cX/{H_t}}^{h_t},
\end{equation}
where $H_1, \dots, H_t$ are the components of the partition and $h_i=|H_i|$ for $i=1,\ldots t$.
\end{theorem}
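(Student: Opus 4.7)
The plan is to realize the stated isogeny as the geometric shadow of an algebraic identity of idempotents in the rational group algebra $\mathbb{Q}[G]$. For every subgroup $H\le G$, introduce the idempotent $\varepsilon_H := \frac{1}{|H|}\sum_{h \in H} h \in \mathbb{Q}[G]$. Since $G$ acts faithfully on $\cX$ by $\mathbb{K}$-automorphisms, one obtains an injection $\mathbb{Q}[G]\hookrightarrow \text{End}(J_\cX) \otimes \mathbb{Q}$. Under the standard dictionary between coverings and Jacobians, the pullback $\pi_H^{*}\colon J_{\cX/H}\to J_\cX$ along the quotient map $\pi_H\colon \cX\to \cX/H$ yields an isogeny $\varepsilon_H J_\cX \sim J_{\cX/H}$; in particular $\varepsilon_{\{1\}}J_\cX=J_\cX$ and $\varepsilon_G J_\cX\sim J_{\cX/G}$.

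The second step is a short algebraic computation exploiting the partition hypothesis. Because every non-identity element of $G$ lies in exactly one $H_i$ while the identity lies in all $t$ of them,
\begin{equation*}
\sum_{i=1}^{t}\sum_{h\in H_i}h \;=\; (t-1)\cdot 1+\sum_{g\in G}g,
\end{equation*}
which, rewritten in terms of the idempotents, becomes the key identity
\begin{equation*}
\sum_{i=1}^{t}|H_i|\,\varepsilon_{H_i}\;=\;(t-1)\cdot 1 + |G|\,\varepsilon_G
\end{equation*}
in $\mathbb{Q}[G]$. A dimension check confirms consistency: both sides correspond to $\mathbb{Q}[G]$-modules of total dimension $\sum_i |H_i|\cdot [G:H_i]=t|G|$ and $(t-1)|G|+|G|\cdot 1=t|G|$, respectively.

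The third step is the Kani--Rosen translation principle: any $\mathbb{Z}$-linear identity $\sum n_i e_i=\sum m_j f_j$ among idempotents of $\mathbb{Q}[G]$ lifts to an isogeny $\prod_i (e_i J_\cX)^{n_i}\sim \prod_j (f_j J_\cX)^{m_j}$ of abelian varieties. Applied to the identity of Step 2 together with the dictionary of Step 1, this yields at once
\begin{equation*}
\prod_{i=1}^{t}J_{\cX/H_i}^{|H_i|}\;\sim\;J_\cX^{t-1}\times J_{\cX/G}^{|G|},
\end{equation*}
which is exactly the asserted relation after rearrangement.

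The main obstacle is establishing the translation principle itself. The natural framework is the category $\mathcal{A}^0$ of abelian varieties up to isogeny, which by Poincar\'e's reducibility theorem is a semisimple $\mathbb{Q}$-linear abelian category. One verifies that the assignment $M\mapsto M\otimes_{\mathbb{Q}[G]}J_\cX$, interpreted via idempotent decompositions, extends to an exact functor from finitely generated $\mathbb{Q}[G]$-modules into $\mathcal{A}^0$ that sends the permutation module $\mathbb{Q}[G]\varepsilon_H\cong \mathbb{Q}[G/H]$ to the isogeny class of $J_{\cX/H}$. Isomorphic $\mathbb{Q}[G]$-modules are then sent to isogenous abelian varieties; in particular, the $\mathbb{Q}[G]$-isomorphism $\bigoplus_i(\mathbb{Q}[G]\varepsilon_{H_i})^{|H_i|}\cong \mathbb{Q}[G]^{t-1}\oplus (\mathbb{Q}[G]\varepsilon_G)^{|G|}$ forced by Step 2 transports to the desired isogeny. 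The delicate point is checking that this transport extends coherently from orthogonal to arbitrary integer combinations of idempotents, so that an identity that is not a direct-sum decomposition in $\mathbb{Q}[G]$ still yields an honest isogeny in $\mathcal{A}^0$.
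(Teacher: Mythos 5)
The paper does not prove this statement: it is quoted from Kani and Rosen \cite[Theorem B]{kr} and used as a black box, so there is no internal proof to compare against. Your argument is, in substance, the original Kani--Rosen derivation of their Theorem B from their Theorem A: the count showing $\sum_{i=1}^{t}|H_i|\varepsilon_{H_i}=(t-1)\cdot 1+|G|\varepsilon_G$ in $\mathbb{Q}[G]$ is correct (each non-identity element lies in exactly one $H_i$, the identity in all $t$), the identification $\varepsilon_H J_{\cX}\sim J_{\cX/H}$ via $\pi_H^{*}$ and $\pi_{H*}\pi_H^{*}=|H|\cdot\mathrm{id}$ is standard, and the rest is exactly their ``translation principle'' (Theorem A). Two remarks. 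First, a small slip: the map $\mathbb{Q}[G]\to\mathrm{End}(J_{\cX})\otimes\mathbb{Q}$ is generally \emph{not} injective even for a faithful action --- in the very application of this paper $V/G$ is rational, so $\varepsilon_G$ is sent to $0$; but only the ring homomorphism is needed to push the idempotent identity forward, so this is harmless. Second, the one genuine gap is the point you yourself flag: passing from an identity of (non-orthogonal) idempotents to an isogeny. It is closed as follows. By Poincar\'e reducibility the isogeny category is semisimple, and for each simple factor $B$ of $J_{\cX}$ the multiplicity of $B$ in $\varepsilon J_{\cX}$ equals the reduced rank of the image of $\varepsilon$ in the $B$-isotypic block of the semisimple algebra $\mathrm{End}^0(J_{\cX})$; for an idempotent this rank equals its reduced trace, which is additive, so any relation $\sum n_i\varepsilon_i=\sum m_j\varepsilon'_j$ among idempotents forces equality of total multiplicities of every simple factor on the two sides, hence the asserted isogeny. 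The same reduced-trace argument inside $\mathbb{Q}[G]$ justifies the module isomorphism you say is ``forced by Step 2''. So the proposal is correct in outline and faithful to the cited source; to be self-contained it needs only these few lines making Theorem A precise.
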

In fact, the Kani-Rosen theorem applies to $G$ as $G\cong {\rm{Sym}}_5$ and ${\rm{Sym}}_5\cong \PGL(2,5)$ is equipped with a partition whose components form three conjugacy classes of lengths $15,6,10$, namely those consisting of all cyclic subgroups of order $4,5$ and $6$, respectively. Since $\aut(V)$ acts on the set of five coordinates $\{X_1,\ldots X_5\}$, representatives of the conjugacy classes are: $C_4=\langle (X_1,X_2,X_3,X_4)\rangle$, $C_5=\langle(X_1,X_2,X_3,X_4,X_5)\rangle$, and $C_6=\langle (X_1,X_2,X_3)(X_4,X_5)\rangle$, respectively. In our case, since $V/G$ is rational, Theorem \ref{kanirosen} reads
\begin{equation}\label{kaniroseneq}
J_{V}^{30}\sim J_{V/{C_4}}^{60}\times J_{V/{C_5}}^{30}\times J_{V/{C_6}}^{60}
\end{equation}
Therefore, $V$ is an $\mathbb{F}_{p^2}$-maximal curves if each $J_{V/C_i}$ with $i=4,5,6$ is either rational, or a $\mathbb{F}_{p^2}$-maximal elliptic curve.
We show first that several quotient curves of $V$ are rational.
\begin{proposition}\label{quozrazionale}
Each of the following quotient curves is a rational curve:
\begin{itemize}
\item $V/C_5$
\item $V/G_8$, where $G_8$ is a (dihedral) subgroup of $G$ of order $8$;
\item $V/G_{24}$, where $G_{24}$ is a subgroup (isomorphic to $\rm{Sym}_{4}$) of $G$ of order $24$;
\item $V/G_{12}$, where $G_{12}$ is a subgroup of $G$ of order $12$;
\item $V/G_{20}$, where $G_{20}$ is a subgroup of $G$ of order $20$.
\end{itemize}
\end{proposition}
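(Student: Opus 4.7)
The plan is to apply the Riemann--Hurwitz formula, in the form
\[
2\mathfrak{g} - 2 \;=\; |H|\bigl(2\bar{\mathfrak{g}} - 2\bigr) \;+\; \sum_{\sigma \in H\setminus\{e\}} \bigl|\mathrm{Fix}_V(\sigma)\bigr|,
\]
to each of the five subgroups $H$ in turn; here $\mathfrak{g}=\mathfrak{g}(V)=4$ and $\bar{\mathfrak{g}}=\mathfrak{g}(V/H)$. Since $p\ge 7$ is coprime to every order $|H|\in\{5,8,12,20,24\}$, the cover $V\to V/H$ is tame, so \eqref{hufo} applies; the formulation above is obtained by a standard double-count, since a short orbit of length $\ell_i$ and stabiliser order $|H|/\ell_i$ contributes $\ell_i(|H|/\ell_i-1)=|H|-\ell_i$ to the sum $\sum_{\sigma\ne e}|\mathrm{Fix}_V(\sigma)|$. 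Thus $\bar{\mathfrak{g}}=0$ is equivalent to the numerical identity $\sum_{\sigma\ne e}|\mathrm{Fix}_V(\sigma)|=2|H|+6$.

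The crucial input for the ramification count is already in place. By Lemma \ref{3orbite} combined with Lemmas \ref{lem10octAchar0}, \ref{lem7oct} and \ref{lem18oct}, the only nontrivial elements of $G=\mathrm{Sym}_5$ with a fixed point on $V$ are $5$-cycles (stabilisers on $\Omega_\omega$), $4$-cycles together with their squares (stabilisers on $\Omega_\varepsilon$), and transpositions (stabilisers on $\Omega_\theta$); in particular, $3$-cycles and elements of order $6$ in $G$ act freely on $V$. A straightforward double count using the orbit lengths $24,\,30,\,60$ of Lemma \ref{3orbite}, together with Lemma \ref{lem11octC}, yields the following per-element fixed-point counts on $V$: each $5$-cycle fixes $4$ points, each $4$-cycle fixes $2$ points, each double transposition fixes $2$ points, and each transposition fixes $6$ points.

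What remains is routine bookkeeping. I would realise each subgroup explicitly as $C_5=\langle(X_1X_2X_3X_4X_5)\rangle$; $G_8=\langle(X_1X_2X_3X_4),(X_1X_3)\rangle$, a Sylow $2$-subgroup of $G$; $G_{24}$ the stabiliser of $X_5$, isomorphic to $\mathrm{Sym}_4$; $G_{12}=\mathrm{Sym}(\{X_1,X_2,X_3\})\times\langle(X_4X_5)\rangle\cong\mathrm{Sym}_3\times C_2$; and $G_{20}=N_G(C_5)\cong C_5\rtimes C_4$, the Frobenius group of order $20$ (the only, up to conjugacy, subgroup of order $20$). Listing the elements of each by cycle type and applying the fixed-point counts above gives ramification sums $16,\,22,\,54,\,30,\,46$ respectively, matching $2|H|+6$ in every case and hence $\bar{\mathfrak{g}}=0$. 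Over $\mathbb{K}$ a curve of genus zero is rational, so the proposition follows.

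The only conceptual subtlety is the interpretation of $G_{12}$: up to conjugacy $\mathrm{Sym}_5$ has two order-$12$ subgroups, namely $\mathrm{Sym}_3\times C_2$ and $\mathrm{Alt}_4$. The latter contains no transpositions but three double transpositions and eight $3$-cycles, giving ramification sum $6$ and hence $\bar{\mathfrak{g}}=1$; so the statement forces $G_{12}\cong\mathrm{Sym}_3\times C_2$, as in the realisation above. This is the one point requiring care; everything else is an arithmetic verification from the per-element fixed-point count.
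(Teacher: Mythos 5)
Your proof is correct and uses the same tool as the paper's, namely the tame Hurwitz formula (\ref{hufo}) together with the orbit and stabiliser data of Lemma \ref{3orbite} and Lemma \ref{lem11octC}; your per-element fixed-point counts ($4$ for a $5$-cycle, $2$ for a $4$-cycle or a double transposition, $6$ for a transposition, $0$ for $3$-cycles and order-$6$ elements) and the resulting ramification sums $16,22,54,30,46=2|H|+6$ all check out. The execution differs in two respects. First, the paper argues by inequalities (each group carries too much ramification for $\bar{\mathfrak{g}}=1$ to survive) and deduces the $G_{20}$ and $G_{24}$ cases from $C_5$ and $G_8$ via subgroup containment, whereas you verify the exact numerical identity uniformly for all five groups; your version is more self-contained. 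Second, and more substantively, your remark about $G_{12}$ is not a pedantic aside: $V/\mathrm{Alt}_4$ genuinely has genus $1$ (the three double transpositions of $\mathrm{Alt}_4$ each fix two points of $\Omega_\varepsilon$, giving ramification sum $6$), so the statement as written fails for that conjugacy class, and the paper's own argument for $G_{12}$ --- which asserts that $G_{12}$ contains no product of two transpositions --- is in fact incorrect for both order-$12$ conjugacy classes of $\mathrm{Sym}_5$, since each contains three double transpositions. Your identification of the intended $G_{12}$ as $\mathrm{Sym}_3\times C_2$ is the right reading: in the only place the proposition is invoked for an order-$12$ subgroup (showing that $\mathbb{K}(A,B)$ is the full fixed field of $C_6$), the subgroup in question contains an element of order $6$ and is therefore $\mathrm{Sym}_3\times C_2$ rather than $\mathrm{Alt}_4$.
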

\begin{proof}
We apply the Riemann-Hurwitz formula to each of the groups $C_5,G_8,G_{24},$ and $G_{12}$. For $i\in\{5,8,12,24\}$, let $\gg_i$ denote the genus of the corresponding quotient curve. From Theorem \ref{3orbite}, $G$ acts on $V$ with exactly $3$ short orbits, namely $\Omega_\omega$, $\Omega_\epsilon$ and $\Omega_\theta$, of length $24$, $30$ and $60$ respectively.
For $C_5$ the Riemann-Hurwitz formula reads
\begin{equation*}
6=10(\bar{\gg}_5-1)+\sum (5-|o_i|);
\end{equation*}
with $o_i$ running over the set of short orbits of $C_5$. Since $C_5$ has at least four fixed points on $\Omega_\omega$, it follows $\gg_5=0$. Also, since a group of order $20$ contains a subgroup of order $5$, this implies $\gg_{20}=0$.
For $G_8$ the Riemann-Hurwitz formula reads
\begin{equation*}
6=16(\bar{\gg}_8-1)+\sum (8-|o_i|);
\end{equation*}
with $o_i$ running over the set of short orbits of $G_8$. This implies $\bar{\gg}_8\le 1$. If equality holds then $G_8$ has a unique short orbit of length $2$. On the other hand, since neither $30$ nor $60$ is divisible by $8$,  $G_8$ has a short orbit in both $\Omega_\epsilon$ and $\Omega_\theta$, a contradiction which implies  $\bar{\gg}_8=0$. Finally, since $\rm{Sym}_4$ contains a subgroup of order $8$,  $\gg_{24}=0$ also holds.
For $G_{12}$, the Riemann-Hurwitz formula reads
\begin{equation*}
6=24(\bar{\gg}_{12}-1)+\sum (12-|o_i|);
\end{equation*}
with $o_i$ running over the set of short orbits of $G_{12}$. This implies $\bar{\gg}_{12}\le 1$. If equality holds then $G_{12}$ has a unique short orbit of length $12$, contained in $\Omega_\epsilon$. From the orbit-stabilizer theorem follows that $G_{12}$ contains an involution fixing a point in $\Omega_\epsilon$. This implies that the involution must be the product of two transposition, but it can be checked that the group $G_{12}$ does not contains any such element. It follows $\gg_{12}=0$.
\end{proof}
Next we show for every $p \ge 7$ that $J_{V/C_i}$ with $i=4,6$ is an elliptic curve, and that these two elliptic curves  are pairwise isogenous over $\mathbb{F}_{p^2}$.

The above defined $C_4$ can be viewed as an automorphism group of $F$ generated by the automorphism $(x_1,x_2,x_3,x_4,x_5)\mapsto (x_2,x_3,x_4,x_1,x_5)$. A Magma aided computation shows that each of following elements $b,c,d,\in F$ is left invariant by $C_4$:
$$
\begin{array}{lll}
b=-48 x_2 x_3^2 x_4^4 x_5^2 - 48 x_3^2 x_4^5 x_5^2 - 48 x_2 x_4^6 x_5^2 -48 x_4^7 x_5^2 -
24 x_2 x_3^2 x_4^3 x_5^3 - 24 x_2 x_3 x_4^4 x_5^3 -
    48 x_3^2 x_4^4 x_5^3 - \\
    48 x_2 x_4^5 x_5^3 - 24 x_3 x_4^5 x_5^3 -
    72 x_4^6 x_5^3 - 36 x_2 x_3^2 x_4^2 x_5^4 - 12 x_2 x_3 x_4^3 x_5^4 -
    48 x_3^2 x_4^3 x_5^4 - 84 x_2 x_4^4 x_5^4 - 24 x_3 x_4^4 x_5^4 -\\
    108 x_4^5 x_5^4 - 208/9 x_2 x_3^2 x_4 x_5^5 - 28/9 x_2 x_3 x_4^2 x_5^5 -
    370/9 x_3^2 x_4^2 x_5^5 - 668/9 x_2 x_4^3 x_5^5 - 82/9 x_3 x_4^3 x_5^5 -
    1046/9 x_4^4 x_5^5 - \\
    16/3 x_2 x_3^2 x_5^6 - 104/9 x_2 x_3 x_4 x_5^6 -
    152/9 x_3^2 x_4 x_5^6 - 44 x_2 x_4^2 x_5^6 - 118/9 x_3 x_4^2 x_5^6 -
    730/9 x_4^3 x_5^6 + 64/27 x_2 x_3 x_5^7 -\\ 8/3 x_3^2 x_5^7 -
    712/27 x_2 x_4 x_5^7 - 92/27 x_3 x_4 x_5^7 - 1306/27 x_4^2 x_5^7 -
    2128/243 x_2 x_5^8 + 32/27 x_3 x_5^8 - 5332/243 x_4 x_5^8 - \\1064/243 x_5^9,
\end{array}
$$
$$
\begin{array}{lll}
c=-67/3 x_2 x_3^2 x_4^2 x_5^4 + 1/3 x_2 x_3 x_4^3 x_5^4 + 68/3 x_3^2 x_4^3 x_5^4 -
    67/3 x_2 x_4^4 x_5^4 + 67/3 x_4^5 x_5^4 - 11 x_2 x_3 x_4^2 x_5^5 +
    1/6 x_3^2 x_4^2 x_5^5 - \\11 x_2 x_4^3 x_5^5 + 23/2 x_3 x_4^3 x_5^5 +
    67/6 x_4^4 x_5^5 - 68/9 x_2 x_3^2 x_5^6 + 2 x_2 x_3 x_4 x_5^6 +
    86/9 x_3^2 x_4 x_5^6 - 217/9 x_2 x_4^2 x_5^6 + \\
    5/18 x_3 x_4^2 x_5^6 +
    439/18 x_4^3 x_5^6 + 272/81 x_2 x_3 x_5^7 + 578/81 x_3^2 x_5^7 -
    790/81 x_2 x_4 x_5^7 - 97/81 x_3 x_4 x_5^7 + 107/6 x_4^2 x_5^7 -\\
    116/81 x_2 x_5^8 + 632/81 x_3 x_5^8 + 217/81 x_4 x_5^8 + 554/81 x_5^9,
\end{array}
$$
$$
\begin{array}{lll}
d=72 x_2 x_3^2 x_4^5 x_5 + 72 x_2 x_4^7 x_5 + 54 x_2 x_3^2 x_4^4 x_5^2 +
    36 x_2 x_3 x_4^5 x_5^2 + 18 x_3^2 x_4^5 x_5^2 + 90 x_2 x_4^6 x_5^2 +
    18 x_4^7 x_5^2 + 63 x_2 x_3^2 x_4^3 x_5^3 + \\
    27 x_2 x_3 x_4^4 x_5^3 +
    36 x_3^2 x_4^4 x_5^3 + 144 x_2 x_4^5 x_5^3 + 9 x_3 x_4^5 x_5^3 +
    45 x_4^6 x_5^3 + 178/3 x_2 x_3^2 x_4^2 x_5^4 + 9 x_2 x_3 x_4^3 x_5^4 +
    16 x_3^2 x_4^3 x_5^4 + \\ 154 x_2 x_4^4 x_5^4 + 55/3 x_3 x_4^4 x_5^4 +
    142/3 x_4^5 x_5^4 + 50/3 x_2 x_3^2 x_4 x_5^5 + 24 x_2 x_3 x_4^2 x_5^5 +
    29 x_3^2 x_4^2 x_5^5 + 298/3 x_2 x_4^3 x_5^5 +\\
    8/3 x_3 x_4^3 x_5^5 +
    209/3 x_4^4 x_5^5 + 52/9 x_2 x_3^2 x_5^6 - 2/9 x_2 x_3 x_4 x_5^6 +
    110/9 x_3^2 x_4 x_5^6 + 613/9 x_2 x_4^2 x_5^6 + 74/9 x_3 x_4^2 x_5^6 +\\
    139/3 x_4^3 x_5^6 - 208/81 x_2 x_3 x_5^7 + 532/81 x_3^2 x_5^7 +
    2260/81 x_2 x_4 x_5^7 + 226/27 x_3 x_4 x_5^7 + 2738/81 x_4^2 x_5^7 +
    4 x_2 x_5^8 + \\
    208/81 x_3 x_5^8 + 1460/81 x_4 x_5^8 + 676/81 x_5^9.
\end{array}
$$
Moreover, let $b_1=b/c,d_1=d/c$. From the above computation,
\begin{equation}
\label{quoc4}   135b_1^3-360b_1^2+240b_1+256+256d_1^2=0.
\end{equation}
Let $F_1$ be the subfield of $F$ generated by $b_1,d_1$. Then $F_1$ is elliptic and
the linear map $(b_1,d_1)\mapsto (x,y)$ with $x=-256/135b_1$, $y=-65536/18225d_1$, gives an  equation 
$$y^2 = x^3 + 2^{11}3^{-4}5^{-1} x^2 +2^{20}3^{-8}5^{-2}  x - 2^{32}3^{-12}5^{-4}$$
for $F_1$. Furthermore, $F_1$ is the fixed field $F^{C_4}$ of $C_4$. Indeed, since both $b_1$ and $d_1$ are fixed by $C_4$, $F_1 \subseteq F^{C_4}$ holds. On the other hand, if equality does not hold then Galois theory yields that $F_1$ is the fixed field of a subgroup $N$ of $G$ strictly containing $C_4$. Since $C_4$ is cyclic and $G\cong \rm{Sym}_5$, this yields that the order of $N$ is either $8$, $12$, $20$ or $24$. But then $V/N$ is rational by Proposition \ref{quozrazionale}. Therefore, $F_1=F^{C_4}$.

The quotient curve $V/C_6$ can be investigated in a similar way. The subfield $F^{C_6}$ of $F$ is generated by $A,B$ with
\begin{equation}
\begin{array}{lll}
5585034240000A^4 + 23225726880000A^3B + 27897294510000A^2B^2 +\\
    7952734845000AB^3 + 1056082140000B^4 +  13606338560000A^2B +\\
    28775567360000AB^2 + 6849136640000B^3 + 11767644160000B^2=0.
\end{array}
\end{equation}
The birational map
$$\begin{array}{lll}
(A,B)\mapsto(2^{19}3^{-6}5^{-2} AB  + 2^{20} 3^{-6}5^{-2} B^2,2^{32}37\cdot3^{-12}5^{-4}  A^2 + \\2^{30}313\cdot 3^{-12}5^{-4}  AB +
    2^{29}149\cdot   3^{-12}5^{-4} B^2 +\\ 2^{38}  3^{-12}5^{-4} B, -A^2 - 4AB - 4B^2),
\end{array}
$$
is a birational isomorphism over $\mathbb{F}_p$ to the elliptic curve $E_2$ with equation
\begin{equation}
y_2^2 = x_2^3 - 2^{20}3^{-8}5^{-2}71 x_2^2 -
2^{43}3^{-16}5^{-4}41 x_2 - 2^{64}23 \cdot
3^{-24}5^{-6}.
\end{equation}
Since, by Proposition \ref{quozrazionale}, the function field of the quotient curve of any subgroup of $G$ properly containing $C_6$ is rational, it follows that $\mathbb{K}( A,B)=\mathbb{K}(x_2,y_2)$ is the function field of $V/C_6$.

Furthermore, the elliptic curves $E_1$ and $E_2$ are isogenous over $\mathbb {F}_p$  via the isogeny
$$
\begin{array}{lll}
(x , y ) \mapsto ((2^{10}3^{-4}x^3 +   2^{20}\cdot 17\cdot3^{-8} 5^{-2} x^2 +
    2^{30}31\cdot3^{-12}\cdot 5^{-3}   x + 2^{40}11 \cdot 3^{-16}5^{-4})\\ / (x^2 - 2^{11}3^{-4}5^{-1} x +
    2^{20}3^{-8} 5^{-2}) , (2^{15}3^{-6} x^3y - 2^{25}3^{-9}5^{-1} x^2y -
    2^{35}13\cdot 3^{-14}5^{-2}  xy - \\ 2^{45}53\cdot 3^{-18}5^{-4}  y) / (x^3 -
     2^{10}3^{-3}5^{-1} x^2 + 2^{20}3^{-7}5^{-2} x - 2^{30}3^{-12}5^{-3}) ).
    \end{array}
$$
Observe that, since $p\geq 7$ the denominators do not vanish.

Summing up, the following theorem holds.
\begin{theorem}
\label{the27nov21}
For $p\geq 7$, the Jacobian variety $J_V$ of $V$ has a decomposition over $\mathbb{F}_p$ of the form
$J_V\sim E^4$
where $E=E_1$ is the elliptic curve of Weierstrass equation
$$y^2 = x^3 + 2^{11}3^{-4}5^{-1} x^2 +
 2^{20}3^{-8}5^{-2}  x - 2^{32}3^{-12}5^{-4}   .$$
\end{theorem}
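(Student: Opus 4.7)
The plan is to assemble the Kani--Rosen isogeny relation of \eqref{kaniroseneq} together with the rationality results of Proposition~\ref{quozrazionale} and the explicit Weierstrass identifications of $V/C_4$ and $V/C_6$ just derived, then cancel the multiplicity via Poincar\'e's complete reducibility theorem. Since $V/G$ is rational, its Jacobian is trivial and the Kani--Rosen decomposition already simplifies to
$$J_V^{30}\sim J_{V/C_4}^{60}\times J_{V/C_5}^{30}\times J_{V/C_6}^{60}.$$

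Next I would plug in the pieces already on the table. By Proposition~\ref{quozrazionale}, $V/C_5$ is rational, so $J_{V/C_5}$ is trivial. The computation displayed above gives $\mathbb{F}_p$-birational isomorphisms $V/C_4\cong E_1$ and $V/C_6\cong E_2$, and the explicit rational map exhibited between $E_1$ and $E_2$, whose denominators do not vanish for $p\ge 7$, yields an $\mathbb{F}_p$-isogeny $E_1\sim E_2$. Setting $E=E_1$, the above isogeny collapses to
$$J_V^{30}\sim E^{120}.$$

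The last step, and the only one that is not merely bookkeeping, is to extract $J_V\sim E^4$ from this relation. Here I would invoke Poincar\'e's complete reducibility theorem together with the uniqueness (up to isogeny and permutation of factors) of the decomposition of an abelian variety into simple isogeny components. Since $E$ is simple, every simple component of $J_V$ must be $\mathbb{F}_p$-isogenous to $E$, and since $\dim J_V=\gg(V)=4$ the multiplicity is forced to be exactly $4$, giving $J_V\sim E^4$ over $\mathbb{F}_p$.

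The main point to monitor is that every isogeny and identification used is defined over $\mathbb{F}_p$ rather than only over $\overline{\mathbb{F}}_p$; this is indeed the case because the Kani--Rosen relation, the birational models of $V/C_4$ and $V/C_6$, and the isogeny between $E_1$ and $E_2$ have all been written out with $\mathbb{F}_p$-coefficients. Beyond this minor check, the theorem is a direct consequence of the computations performed earlier in the section.
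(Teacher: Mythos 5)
Your proposal is correct and follows essentially the same route as the paper: the Kani--Rosen relation \eqref{kaniroseneq}, the rationality of $V/G$ and $V/C_5$ from Proposition \ref{quozrazionale}, the explicit $\mathbb{F}_p$-models of $V/C_4$ and $V/C_6$ together with the $\mathbb{F}_p$-isogeny $E_1\sim E_2$, and then cancellation. The only difference is that you make explicit the final step (extracting $J_V\sim E^4$ from $J_V^{30}\sim E^{120}$ via Poincar\'e complete reducibility and the dimension count $\dim J_V=\gg=4$), which the paper leaves implicit in the phrase ``summing up''; your version is the more carefully justified one.
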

Since $E$ is maximal over $\mathbb{F}_{p^2}$ for infinitely many primes \cite{elkies}, Theorem \ref{the27nov21} has the following corollary.
\begin{corollary}
$V$ is a $\mathbb{F}_{p^2}$-maximal curve for infinitely many primes $p$.
\end{corollary}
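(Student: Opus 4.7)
The plan is to combine Theorem~\ref{the27nov21}, the Tate--Lachaud criterion recalled at the beginning of this section, and Elkies's theorem that every elliptic curve defined over $\mathbb{Q}$ is supersingular modulo infinitely many primes.

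First, Theorem~\ref{the27nov21} supplies an $\mathbb{F}_p$-isogeny $J_V \sim E^4$, which is automatically also an $\mathbb{F}_{p^2}$-isogeny. Since $V$ has genus $\mathfrak{g}=4$, the Tate--Lachaud criterion asserts that $V$ is $\mathbb{F}_{p^2}$-maximal if and only if $E$ is $\mathbb{F}_{p^2}$-maximal, so the corollary reduces to producing infinitely many primes $p$ at which $E$ is $\mathbb{F}_{p^2}$-maximal.

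The second step is the standard fact that, for $p \geq 5$, an elliptic curve $E/\mathbb{F}_p$ with trace of Frobenius $a_p$ satisfies $|E(\mathbb{F}_{p^2})| = p^2 + 1 - (a_p^2 - 2p)$, so $\mathbb{F}_{p^2}$-maximality is equivalent to $a_p = 0$, i.e.\ to $E$ having supersingular reduction at $p$. The Weierstrass model of $E$ displayed in Theorem~\ref{the27nov21} has rational coefficients whose denominators only involve the primes $2,3,5$, so $E$ extends to an elliptic curve over $\mathbb{Z}[1/30]$ with good reduction at every prime $p \geq 7$, and the notion of supersingular reduction applies in the usual sense. Elkies's theorem applied to this $E/\mathbb{Q}$ then provides infinitely many primes of supersingular reduction, and at each of these $V$ becomes $\mathbb{F}_{p^2}$-maximal.

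The argument is thus a direct concatenation of three known inputs, and there is essentially no obstacle beyond bookkeeping: checking that the $\mathbb{F}_p$-isogeny provided by Theorem~\ref{the27nov21} remains valid as an $\mathbb{F}_{p^2}$-isogeny (which is automatic), verifying good reduction of the displayed Weierstrass model at all $p \geq 7$, and confirming that the hypothesis of Elkies's theorem is satisfied by our particular $E/\mathbb{Q}$. The only genuinely deep ingredient is Elkies's theorem itself, which is invoked as a black box.
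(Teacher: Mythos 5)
Your proposal is correct and follows essentially the same route as the paper: the paper's proof is precisely the concatenation of Theorem \ref{the27nov21}, the Tate--Lachaud criterion stated at the start of the section, and Elkies's supersingularity theorem. The only difference is that you spell out the bookkeeping (supersingular at $p\ge 5$ forces $a_p=0$, hence $|E(\mathbb{F}_{p^2})|=p^2+1+2p$, and the model has good reduction for $p\ge 7$) which the paper leaves implicit.
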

\begin{rem}\em{
The primes $p\leq 10000$ such that $V$ is $\mathbb{F}_{p^2}$-maximal are
$$p=29, 59, 149, 239, 269, 839, 1439, 1559, 2789, 2909, 4079, 4799, 5519, 6959, 8069, 8819, 9479, 9749.$$}
\end{rem}
\section{Connections with classical results}
\subsection{Connection with R\'edei's work on the Minkowski conjecture}
\label{secredei}
From \cite[Theorem 2.1]{szabo} and \cite[Lemma 6.1]{ln}, every solution $(\xi_1,\ldots,\xi_p)$ with $\xi_i\in \mathbb{F}_p$ also satisfies the diagonal equations $X_1^{k}  +  X_2^{k}  +  \ldots+X_{p}^{k}=0$ for $k=\ha(p+1),\ldots p-3$.

Now, let W be the algebraic variety of $\textrm{PG}(p-1,\mathbb{K})$ associated with the system
\begin{equation}
\label{syconp}\left\{
\begin{array}{llll}
X_1 + X_2 + \ldots + X_{p}=0;\\
X_1^2  +  X_2^2 + \ldots + X_{p}^2=0;\\
\cdots\cdots\\
\cdots\cdots\\
X_1^{p-3}  +  X_2^{p-3}  +  \ldots+X_{p}^{p-3}=0.
\end{array}
\right.
\end{equation}
of diagonal equations. Clearly $E=(1:1:\cdots: 1)$ is a point of $W$. Furthermore, any $\ell$ line through $E$ and another point $P\in W$ is entirely contained in $W$. In fact, if $P=(a_1:\cdots:a_p)$ then the points on $\ell$ distinct from $E$ are $Q=(\lambda+a_1:\cdots:\lambda+a_p)$ with $\lambda \in \mathbb{K}$, and a straightforward computation, $(\lambda+a_1,\ldots, \lambda+a_p)$ is also a solution of (\ref{syconp}). Since the hyperplane $\Pi$ of equation $X_p=0$ does not contain $E$, the algebraic variety cut out on $W$ by $\Pi$ is associated with (\ref{sy}) for $m=p-1$. Since this variety is $V$, Theorem \ref{redeith} and Lemma \ref{lem1210} show that the points of $W$ over $\mathbb{F}_p$ are $E$ together with the points $(\xi_1:\cdots: \xi_p)$ such that $[\xi_1,\ldots,\xi_p]$ is a permutation of the elements of $\mathbb{F}_p$. This gives a geometric interpretation for the result of R\'edei \cite{redei} and of Wang, Panico and Szab\'o \cite{redei} reported in the Introduction.

\subsection{Connection with classical geometry via Serre's work}
Let $\mathbb{K}(V)$ be the function field of $V$. Then $\mathbb{K}(V)=\mathbb{K}(x_1,\ldots,x_m)$ where $(x_1,\ldots,x_m)$ is a solution of (\ref{sy}), or equivalently of (\ref{sysA}). Let $s:=\sigma_{m-1}(x_1,\ldots,x_m)$ and $u:=\sigma_m(x_1,\ldots,x_m)$. Then $x_1,\ldots,x_m$ are the roots of the polynomial $F(Y)=Y^m+(-1)^{m-1}sY+(-1)^mu$. Replacing $Y$ in $F$ by $u/sZ$ gives $G(Z)=Z^m-Z^{m-1}-v$ with $v=u^{m-1}/s^m$. Therefore, $\mathbb{K}(V)$ is the Galois closure of the polynomial $Z^m-Z^{m-1}-v$. As Serre \cite[Chapter 4]{serA} pointed out in zero characteristic, $G(Z)$ has a cycle of order $m$ at $\infty$, another of order $m-1$ at $0$, and a transposition at $1-1/m$, and hence it has ramification type $(m,m-1,2)$. This holds true in our case, as $p>m$. Furthermore, using Serre's observation together with some classical results on Galois extensions one can show that $V$ has the properties shown in Theorem \ref{the2502} and in Lemmas \ref{lem11octE}; see \cite[Section 4.4]{serA}. Concerning the action of $\rm{Sym}_m$ on $V$, Lemma \ref{lem11octE} can also be proven by using Serre's observation together with \cite{vW}. However, the study of $\aut(V)$  appears to require much more knowledge about the geometry of $V$ and from group theory; see the proofs of Theorems \ref{the171021} and \ref{th26062024A}.

\section*{Acknowledgements}
This research was partially supported by the Italian National Group for
Algebraic and Geometric Structures and their Applications (GNSAGA - INdAM). The research of M. Timpanella was funded by the Irish Research Council, grant n. GOIPD/2021/93.

\end{document}